\documentclass[a4paper]{amsart}
\usepackage{amssymb, bbm}

\usepackage[utf8]{inputenc} 
\usepackage[T1]{fontenc}

\usepackage[hidelinks]{hyperref} % no ugly link borders
   \hypersetup{final} % to have those links while draft option is activated

\usepackage{mathtools}
\mathtoolsset{centercolon} % makes ' := ' nice	

\usepackage{graphicx, tikz}

%%%%%%%%%%%%%%%%%%%%%%%%

%%%%%%%%%%%%%%%%%%%%%%%%

\newtheorem{lem}{Lemma}
\newtheorem{prop}[lem]{Proposition}
\newtheorem{cor}[lem]{Corollary}
\newtheorem{thm}[lem]{Theorem}
\newtheorem{conj}[lem]{Conjecture}

\theoremstyle{remark}
\newtheorem{rmk}[lem]{Remark}

\def\Var{\mathrm{Var}}
\def\er{{\mathbb R}}
\def\en{{\mathbb N}}

\def\Ex{{\mathbb E}}
\def\Pr{\mathbb P}

\def\zet{{\mathbb Z}}

\def\conv{\mathrm{conv}}

\def\ve{\varepsilon}
\def\ind{\mathbbm{1}}

\newcommand*{\Log}{\operatorname{Log}} 
\def\supp{\mathrm{supp}}

% needed in older versions of amsart for current MSC:
\makeatletter
\@namedef{subjclassname@2020}{%
  \textup{2020} Mathematics Subject Classification}
\makeatother

\keywords{Gaussian random matrices, operator norm, structured random matrix}

\begin{document}

\title[Operator norms of Gaussian matrices]{Operator $\ell_p\to\ell_q$ norms of Gaussian matrices}

\author[R. Lata{\l}a]{Rafa{\l} Lata{\l}a}
\address{University of Warsaw, Institute of Mathematics,  Banacha 2, 02--097 Warsaw, Poland.}
\email{rlatala@mimuw.edu.pl}

\author[M. Strzelecka]{Marta Strzelecka}
\address{University of Warsaw, Institute of Mathematics, Banacha 2, 02--097 Warsaw, Poland.}
\email{martast@mimuw.edu.pl}

\begin{abstract}
We confirm the conjecture posed by Gu{\'e}don, Hinrichs, Litvak, and Prochno in 2017 that 
$\Ex\|(a_{ij}g_{ij})_{i\le m, j\le n}\colon \ell_p^n \to \ell_q^m\|$ is comparable, 
up to constants depending only on $p$ and $q$, to
\[
\max_i \|(a_{ij})_j\|_{p^*} +\max_j \|(a_{ij})_i\|_{q} +\Ex \max_{i,j} |a_{ij}g_{ij}|
\]
provided that $1\le p \le 2\le q \le \infty$. This was known before only in the case $p=1$ or $q=\infty$, and in the spectral case $p=2=q$. We also reprove the conjecture in the case $p=2=q$ without using spectral theory (which was employed in the previously known proof).
\end{abstract}

\maketitle

%%%%%%%%%%%%%%%%%%%%%%%%%%%%
%%%%%%%%%%%%%%%%%%%%%%%%%%%%%%
\section{Introduction}

Let $A=(a_{ij})_{i\le m, j\le n}$ be a deterministic $m\times n$ matrix and let 
$p,q\in[1,\infty]$. In this paper we study $\ell_p^n\to  \ell_q^m$ norms of centered 
structured Gaussian random matrices $G_A=(a_{ij}g_{ij})_{i\le m,j\le n}$ with a variance  
profile $A\circ A=(a_{ij}^2)_{i\le m, j\le n}$, i.e., quantities of the form

\[
\|G_A\|_{p\to q}=\|G_A \colon \ell_p^n\to\ell_q^m  \|
=\sup\Bigl\{\sum_{i=1}^m\sum_{j=1}^na_{ij}g_{ij}s_it_j\colon s\in B_{q^*}^m, t\in B_{p}^n\Bigr\},
\]
where random variables $g_{ij}$ are iid standard Gaussians, $q^*$ denotes the H{\"o}lder 
conjugate of $q$, i.e., the unique number from $[1,\infty]$ satisfying 
$\frac 1q+\frac 1{q^*}=1$, and $B_p^n$ is the unit ball in the $\ell_p$-norm in $\er^n$.

Although the behaviour of random matrices with iid entries is quite well understood, it is not the case for  random matrices with a non-trivial variance profile, whose $\ell_p^n \to \ell_q^m$ norms  appear naturally in many problems in applied mathematics; see the introduction of \cite{APSS} and the references therein. 
However, much effort was made recently to understand $\ell_p^n \to \ell_q^m$ norms of structured random matrices (cf. \cite{BvH, vH,  LvHY, GHLP, vH2017_survey,  LS, Lbern, APSS, BH2024, RX}). 

In this paper we focus on two-sided estimates (i.e., lower and upper bounds matching 
up to a multiplicative constant) for the expectation of $\|G_A\|_{p\to q}$. 
Such bounds encode much more information than only the order of $\Ex \|G_A\|_{p\to q}$. 
They imply two-sided estimates on higher moments and tail bounds for 
$\|G_A\|_{p\to q}$ 
(see Corollary~\ref{cor:higher-moments-tailsGauss} below).
Moreover, they yield a condition for an infinite Gaussian matrix to be a bounded operator 
from $\ell_p$ to $\ell_q$ (see Corollary~\ref{cor:bd-operators}  below). We also discuss 
how to generalize the estimates for $\|G_A\|_{p\to q}$  to  more general classes of random 
matrices with independent, but not necessarily Gaussian entries.

Before we move further, let us introduce some more notation. 
For two nonnegative functions $f$ and $g$ we write $f\lesssim g$ (or $g \gtrsim f$) 
if there exists an absolute constant $C$ such that $f  \le  Cg$;
the notation $f \sim g$ means that $f \lesssim g  \lesssim f$. 
We write $\lesssim_{ \alpha}$, $\sim_{K,\gamma}$, etc.\ if the underlying constant depends 
on the parameters given in the subscripts.
Whenever we write $p\ge p_1$ or $p\le p_2$ we mean that $p\in[p_1, \infty]$ 
or $p\in[1,p_2]$, respectively.
By $[m]$ we denote the set $\{1,\ldots,m\}$ of the first $m$ positive integers.
Let us also denote
\[
\Log 0=1  \qquad \text{and } \qquad \Log x = 1\vee \ln x \quad  \text{for }x>0.
\]

If $p=2=q$, the $\ell_p^n\to \ell_q^m$ norm coincides with the spectral norm and it 
is known by \cite{LvHY} that
\begin{align*}
\Ex\|G_A\|_{2\to 2} 
& \sim \max_i \|(a_{ij})_j\|_{2} +\max_j \|(a_{ij})_i\|_{2} +\Ex\max_{i,j}|a_{ij}g_{ij}|
\\ 
&  \sim  \Ex \max_i \|(a_{ij}g_{ij})_j\|_{2}+  \Ex \max_i \|(a_{ij}g_{ij})_j\|_{2}.
\end{align*}
Moreover,  two-sided bounds are also known for extremal values of $(p,q)$, i.e., 
when $p\in\{1,\infty\}$ or $q\in\{1,\infty\}$ (see \cite[Remark~1.4]{GHLP} and 
\cite[Propositions~1.8 and 1.10]{APSS}). 
The question whether similar two-sided inequalities  hold  for other ranges of $p$ and $q$ 
with arbitrary $A$ was, up to now, entirely open; all  known bounds match only up to 
logarithmic terms in the dimension (see \cite{GHLP, APSS}) or are valid only in some very special cases 
(for the trivial structure, i.e., when $a_{ij}=1$ for all $i,j$, or, more generally, for tensor structures -- this follows from the Chevet inequality \cite{Ch}). 
We refer to  the introductions of \cite{APSS} and \cite{LSChevet} for more details and 
an overview of the history of the problem.

From now on, we will consider only the case $1\le p\le 2\le q\le\infty$. 
The following conjecture was formulated in \cite{GHLP} 
(see  \cite{APSS} for a discussion of other ranges of $p$ and~$q$).

\begin{conj}
\label{conj:lplq}
For every $p\le 2\le q$ and every deterministic $m\times n$ matrix $A=(a_{ij})_{i\le m,j\le n}$,
\[
\Ex\|G_A\|_{p\to q}\sim_{p,q} \max_i \|(a_{ij})_j\|_{p^*} +\max_j \|(a_{ij})_i\|_{q} 
+\Ex\max_{i,j}|a_{ij}g_{ij}|.
\]
\end{conj}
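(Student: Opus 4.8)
The plan is to prove the two matching inequalities separately; throughout write $R=\max_i\|(a_{ij})_j\|_{p^*}$, $C=\max_j\|(a_{ij})_i\|_q$, and $M=\Ex\max_{i,j}|a_{ij}g_{ij}|$, and let $h$ denote a standard Gaussian vector of the relevant dimension in each occurrence.

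\emph{The lower bound} is the easy direction: each of $R$, $C$, $M$ is at most a $(p,q)$-dependent multiple of $\Ex\|G_A\|_{p\to q}$, obtained by restricting the supremum that defines the norm. Taking $s=\mathrm{sign}(a_{i_0j_0}g_{i_0j_0})e_{i_0}\in B_{q^*}^m$ and $t=e_{j_0}\in B_p^n$, with $(i_0,j_0)$ realizing $\max_{i,j}|a_{ij}g_{ij}|$, gives $\|G_A\|_{p\to q}\ge\max_{i,j}|a_{ij}g_{ij}|$. Fixing $t=e_j$ and optimizing over $s$ gives $\Ex\|G_A\|_{p\to q}\ge\Ex\|(a_{ij}g_{ij})_i\|_q\gtrsim_q\|(a_{ij})_i\|_q$ for each $j$, by the standard comparison $\Ex\|(\sigma_ig_i)_i\|_r\sim_r\|(\sigma_i)_i\|_r$ for $r\in[1,\infty)$; the dual choice $s=e_i$ yields $\gtrsim_p\|(a_{ij})_j\|_{p^*}$. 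Averaging the three bounds after maximizing over $i,j$ finishes this direction.

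\emph{For the upper bound} I would view $\|G_A\|_{p\to q}=\sup_{(s,t)\in T}X_{s,t}$, where $X_{s,t}=\sum_{i,j}a_{ij}g_{ij}s_it_j$ and $T:=B_{q^*}^m\times B_p^n$, so that (by the majorizing measure theorem) it suffices to bound $\gamma_2(T,d)$ for the canonical metric $d\bigl((s,t),(s',t')\bigr)^2=\sum_{i,j}a_{ij}^2(s_it_j-s_i't_j')^2$. Two facts frame the estimate. First, $\mathrm{diam}(T,d)\lesssim\max_{i,j}|a_{ij}|\lesssim M$, using $\|s\|_2\le\|s\|_{q^*}$ and $\|t\|_2\le\|t\|_p$ (valid since $p\le 2\le q$). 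Second, the partial bounds $\max_i\sum_j a_{ij}^2t_j^2\le R^2$ on $B_p^n$ and $\max_j\sum_i a_{ij}^2s_i^2\le C^2$ on $B_{q^*}^m$ (Hölder, again using $p\le 2\le q$) yield $d\bigl((s,t),(s',t')\bigr)\le R\|s-s'\|_2+C\|t-t'\|_2$, hence by chaining over the two factors $\gamma_2(T,d)\lesssim R\,\Ex\|h\|_q+C\,\Ex\|h\|_{p^*}$; this is Chevet's inequality, sharp only for tensor-type $A$ and wildly over-counting otherwise (for diagonal $A$ its right side is $\gtrsim m^{1/q}$ while $\Ex\|G_A\|_{p\to q}\sim M\sim\sqrt{\log m}$). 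The core of the proof is thus to bound $\gamma_2(T,d)$ using the genuine metric $d$ rather than this dominating product metric: first peel off the largest entries of $G_A$, whose $\ell_p\to\ell_q$ norm has expectation $\lesssim_{p,q}M$ by a layered-truncation estimate; then partition $[m]\times[n]$ into blocks on which the entries and the row $\ell_{p^*}$/column $\ell_q$ profiles are homogeneous up to absolute constants, so that the Chevet bound for each block already matches $R+C+M$ while the between-block interaction is again estimated entrywise and absorbed into $M$. Feeding this decomposition into the chaining bound for $\gamma_2(T,d)$ gives $\Ex\|G_A\|_{p\to q}\lesssim_{p,q}R+C+M$.

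\emph{The main obstacle} is to run this decomposition with constants depending only on $p$ and $q$: a naive dyadic partition costs a factor logarithmic in the number of scales, and making the scales decay geometrically requires exploiting how far $p$ lies from $1$ and $2$, and $q$ from $2$ and $\infty$ (equivalently, the type and cotype of $\ell_p^n$ and $\ell_{q^*}^m$ and the uniform convexity of the corresponding balls). This works when $1\le p<2<q$ or $4/3<p\le 2\le q<4$. The genuinely hard regime is $p=2$ with $q\ge 4$ — and, by the duality $\|A\|_{p\to q}=\|A^\top\|_{q^*\to p^*}$, its transpose $p\le 4/3$ with $q=2$ — where the natural reduction is to control the $\ell_{q^*}\to\ell_q$ norm of the Wishart-type matrix $G_AG_A^\top$, whose entries have $g^2$-type, sub-exponential rather than sub-Gaussian, tails; iterating the relevant squaring step there costs an extra $\log\log(mn)$. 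Eliminating this last factor — that is, establishing the conjectured dimension-free equivalence in these borderline regimes as well — is where a genuinely new idea seems to be needed; short of that, the same scheme still yields the estimate up to a factor of order $\log\log(mn)$.
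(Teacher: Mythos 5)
Your lower bound is fine and matches the paper's (easy) direction. The upper bound, however, has a genuine gap precisely at its central step. You reduce everything to the claim that one can ``partition $[m]\times[n]$ into blocks on which the entries and the row $\ell_{p^*}$/column $\ell_q$ profiles are homogeneous\dots so that the Chevet bound for each block already matches $R+C+M$ while the between-block interaction is again estimated entrywise and absorbed into $M$,'' and then assert that feeding this into chaining gives the result. No mechanism is given for why such a partition exists with boundedly many effective scales, why the per-block Chevet bound does not still overcount (it is lossy even for a single homogeneous block unless the block is of tensor type), or why the cross-block terms are controlled by $M$ rather than by the number of blocks. You yourself flag that a naive multiscale partition loses a logarithm in the number of scales; removing that loss is exactly where all the work of the paper lies, and your proposal does not contain it. Your description of the borderline case via $G_AG_A^\top$ and sub-exponential tails is also not how the difficulty actually manifests: the obstruction at $p^*\wedge q=2$, $p^*\vee q\ge 4$ is that the Slepian--Fernique comparison produces auxiliary Gaussians $Y_i$ with variance $\|(a_{ij})_j\|_4^2$, which can no longer be interpolated against $\|(a_{ij})_j\|_{p^*}$ when $p^*\ge 4$.

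For comparison, the paper's route is: (a) a van Handel-type Slepian--Fernique argument giving, for $p^*,q\in[2,4)$, a bound with error term $\max_{i,j}\Log^{2/(4-p^*\vee q)}(i+j)\,|a_{ij}|$; (b) for $p^*,q>2$, a split of $B_{q^*}^m\times B_p^n$ into ``flat'' and ``spiky'' parts, net arguments over connected subsets of the bipartite graph of $A$, degree-dependent estimates, and an iterated exponent-reduction (truncating $A$ by levels $u_kM$) to reach $\sqrt{p^*}D_1+\sqrt{q}D_2+\sqrt{\Log(mn)}\max|a_{ij}|$; and (c) the block-diagonal decomposition of Proposition~\ref{prop:weaker-esimates-suffice} (following \cite{LvHY}) combining a $\sqrt{\Log(mn)}$-type bound with an $(i+j)^{\varepsilon}$-type bound to remove the dimension dependence. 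Your sketch contains only a faint shadow of (c) and none of the substance of (a) or (b), so as written it does not constitute a proof of the conjecture in any range.
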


The main difficulty in obtaining Conjecture~\ref{conj:lplq} is to prove the upper estimate, 
since the lower bound is easy.
 As we mentioned above, \cite[Remark~1.4]{GHLP} implies that for every 
$p^*,q\ge 2$,
\begin{equation}
\label{eq:ineq-p=1}
\Ex \|G_A\|_{1\to q} =\Ex \max_j \|(a_{ij}g_{ij})_i\|_{q}
 \lesssim \sqrt{q}  \max_{j} \|(a_{ij})_{i}\|_q 
 + \Ex\max_{i, j}|a_{ij}g_{ij}|
\end{equation}
and
\begin{equation}
\label{eq:ineq-p=1-dual}
\Ex \|G_A\|_{p\to \infty} =\Ex \max_i \|(a_{ij}g_{ij})_j\|_{p^*} 
 \lesssim \sqrt{p^*}  \max_{i} \|(a_{ij})_{j}\|_{p^*} + \Ex\max_{i,j}|a_{ij}g_{ij}|,
\end{equation}
so Conjecture \ref{conj:lplq} holds if $p=1$ or $q=\infty$.
Moreover, for other ranges of $p$ and $q$ it was shown in \cite{GHLP} that 
the upper bound holds up to  multiplicative constants depending logarithmically on the 
dimensions. Our main result provides the upper bound without these logarithmic factors, 
so it confirms Conjecture~\ref{conj:lplq}.

\begin{thm}
\label{thm:main}
If $p^*, q\in [2,\infty)$, then for every deterministic matrix 
$A=(a_{ij})_{i\le m,j\le n}$ we have
\begin{align*}
\Ex\|G_A\|_{p\to q}
& \sim_{p,q}
\max_i \|(a_{ij})_j\|_{p^*} +\max_j \|(a_{ij})_i\|_{q} +\Ex\max_{i,j}|a_{ij}g_{ij}|
\\
&\sim 
\max_i \|(a_{ij})_j\|_{p^*} +\max_j \|(a_{ij})_i\|_{q} 
+\max_{k\ge 0}\inf_{|I|=k}\sqrt{\Log k}\max_{(i,j)\notin I} |a_{ij}|
\\
&  \sim_{p,q}
\max_i \|(a_{ij})_j\|_{p^*} +\max_j \|(a_{ij})_i\|_{q} 
+\max_{k\ge 0}\inf_{ |I|=|J|=k}\sqrt{\Log k}\max_{ i\notin I, j\notin J} |a_{ij}|
\\
&\sim_{p,q}  
\Ex \max_i \|(a_{ij}g_{ ij})_j\|_{p^*} + \Ex \max_j \|(a_{ij}g_{ ij})_i\|_{q}.
\end{align*}
\end{thm}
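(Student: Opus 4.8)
The lower bounds are the easy direction and I would dispose of them first. For the quantity $\max_i\|(a_{ij})_j\|_{p^*}$: testing $\|G_A\|_{p\to q}$ against $t=e_j$ for a single column, and averaging, recovers a single row of $G_A$ mapped into $\ell_q^m$; Gaussian concentration on a fixed row gives $\Ex\|(a_{ij}g_{ij})_j\|_{p^*}\gtrsim\|(a_{ij})_j\|_{p^*}$ (since $p^*\ge 2$, the $\ell_{p^*}$-norm of a Gaussian vector is comparable to its $\ell_2$-scaling), and maximizing over $i$ gives the first term; symmetrically for $\max_j\|(a_{ij})_i\|_q$. The term $\Ex\max_{i,j}|a_{ij}g_{ij}|$ is bounded below by evaluating the operator norm at coordinate vectors $e_i\otimes e_j$. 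The comparison $\Ex\max_{i,j}|a_{ij}g_{ij}|\sim\max_{k\ge0}\inf_{|I|=k}\sqrt{\Log k}\max_{(i,j)\notin I}|a_{ij}|$ is a standard fact about maxima of weighted Gaussians (order statistics of $|g|$ grow like $\sqrt{\Log k}$), and the two-index version $\inf_{|I|=|J|=k}$ differs from the one-index version only by constants because deleting $k$ rows and $k$ columns removes at most $k^2$ entries but also at least $k$; I would state this as a separate elementary lemma. Finally $\Ex\max_i\|(a_{ij}g_{ij})_j\|_{p^*}\gtrsim\max_i\|(a_{ij})_j\|_{p^*}$ and $\gtrsim$ a piece of the max-entry term by Gaussian concentration again, so the last line is $\gtrsim$ the first; and it is $\lesssim$ the first line by the triangle-type inequality $\Ex\max_i\|(a_{ij}g_{ij})_j\|_{p^*}\le \max_i\|(a_{ij})_j\|_{p^*}\cdot\Ex\|g\|_{\infty\text{-type}}$-free bound... more carefully, by splitting each Gaussian vector into its expectation-scale part plus fluctuations controlled by the max-entry term.

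For the upper bound — the real content — the plan is to reduce to the two boundary cases already known ($p=2$, i.e. $p^*=2$; and $q=2$) via interpolation/duality, then handle the interior by a chaining or conditioning argument. Concretely: write $\|G_A\|_{p\to q}=\sup_{s\in B_{q^*},t\in B_p}\langle s, G_A t\rangle$. Condition on the $t$-variable: for fixed $t$, $G_A t$ is a Gaussian vector in $\er^m$ whose $i$-th coordinate has variance $\sum_j a_{ij}^2 t_j^2$, and $\sup_s\langle s,\cdot\rangle$ is the $\ell_q$-norm. I would bound $\Ex\sup_{t\in B_p}\|G_A t\|_q$ by a Dudley/Fernique chaining over $B_p$ with respect to the random-process metric, splitting the contribution into a "bulk" part where the variance profile behaves like the corresponding $\ell_p\to\ell_2$ and $\ell_2\to\ell_q$ problems (both reducible to the boundary cases) and a "spiky" part governed by the largest entries, which is exactly where $\max_{k}\inf_{|I|=k}\sqrt{\Log k}\max_{(i,j)\notin I}|a_{ij}|$ enters. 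The restriction $p^*,q<4$ (equivalently $p>4/3$, $q<4$) is what keeps certain moment comparisons — Gaussian hypercontractivity / the $L_2$-vs-$L_4$ gap — lossless; outside that range one picks up the $\log\log$ factor, which is why the theorem excludes $p=2,q\ge4$ and $p\le4/3,q=2$.

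The key technical device I expect to use is a decomposition of $A$ into a "regular" part $A^{(0)}$, whose rows and columns are all comparable to the maxima $\max_i\|(a_{ij})_j\|_{p^*}$ and $\max_j\|(a_{ij})_i\|_q$ after removing a controlled number of large entries, plus a sparse "peak" part $A^{(1)}$ supported on few entries. For $A^{(0)}$ one runs Chevet-type / Gordon-comparison estimates (these are lossless precisely in the stated range of exponents and give the first two terms), and for $A^{(1)}$ the operator norm is trivially bounded by $\Ex\max_{i,j}|a_{ij}g_{ij}|$ up to the $\sqrt{\Log k}$ accounting. Stitching these together and checking the bookkeeping against the three equivalent right-hand sides (using the lower-bound lemma to know that the $\sqrt{\Log}$-form and the $\Ex\max$-form and the $\Ex\max_i\|\cdot\|$-form all agree) completes the proof.

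The main obstacle, I expect, is the chaining step for the "regular" part $A^{(0)}$: one must show that a structured Gaussian matrix whose rows are uniformly bounded in $\ell_{p^*}$ and columns uniformly bounded in $\ell_q$ has operator norm $\lesssim_{p,q}$ the sum of those two bounds, with \emph{no} dimensional logarithm, and this is false at the endpoints — so the argument has to use the strict inequalities $p^*<4$, $q<4$ in an essential, quantitative way (via a fourth-moment/hypercontractive estimate controlling the increments of the Gaussian process over $B_p$ and $B_{q^*}$). Getting that estimate sharp, rather than off by $\sqrt{\log\log}$, is the crux; everything else is bookkeeping and reduction to the known boundary results cited from \cite{GHLP, APSS, LvHY}.
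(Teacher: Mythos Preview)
Your treatment of the lower bounds and of the equivalences among the four right-hand sides is essentially fine and matches the paper's handling (the paper cites \cite{APSS} for the last line and proves the one-index vs.\ two-index $\sqrt{\Log k}$ comparison directly, along the lines you sketch). The real issue is your upper-bound strategy.

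First, your plan to ``reduce to the two boundary cases already known ($p^*=2$ and $q=2$) via interpolation/duality'' cannot work as stated: those boundaries are precisely where the conjecture is \emph{not} established---when $p^*=2,\ q\ge 4$ or $q=2,\ p^*\ge 4$ the paper only gets the bound up to $\Log\Log(mn)$ (Theorem~\ref{thm:rem-case-loglog}). So there is nothing sharp to interpolate from. Second, the claim that ``Chevet-type / Gordon-comparison estimates are lossless precisely in the stated range'' for a generic variance profile is not substantiated; Chevet's inequality is sharp for tensor structures, not for arbitrary $A$, and a naive Gordon comparison will cost you a $\sqrt{\Log(mn)}$ that you then have no way to remove.

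What the paper actually does is quite different. It first proves two \emph{weak} (dimension-dependent) upper bounds: one via a Slepian--Fernique argument in the style of van Handel---comparing to a process built from the negative part $B^-$ of the symmetrized variance matrix, whose variances are controlled by $(\sum_j a_{ij}^4)^{1/2}$, which is where the restriction $p^*,q<4$ genuinely enters (Proposition~\ref{prop:slepianp*qbelow4}); and a second, valid for $p^*,q>2$, obtained by a block decomposition of $A$ together with a separate crude estimate on off-diagonal blocks (Proposition~\ref{prop:weakerD3-allpq}). These weak bounds have the wrong third term---a polylog or small power of the dimension---and the crux of the proof is an \emph{exponent-reduction} procedure: one associates to $A$ a bipartite graph, slices the entries by magnitude so that each slice has controlled degree, and iterates a degree-dependent bound (Propositions~\ref{prop:fromuptopowerdA}--\ref{prop:est2Ykl}) to push the exponent down to $\sqrt{\Log(mn)}$ (Proposition~\ref{prop:dimdependent}). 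Finally a permutation-and-block argument \`a la \cite{LvHY} (Proposition~\ref{prop:weaker-esimates-suffice}) combines the $\sqrt{\Log(mn)}$ bound on blocks with the permutation-dependent weak bound on the complement to eliminate the dimension dependence altogether. None of these three ingredients---the $B^-$ Slepian comparison, the graph-degree exponent reduction, or the LvHY combination lemma---appear in your outline, and your proposed ``regular $+$ peak'' split with Chevet on the regular part does not supply a substitute.
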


\begin{rmk} 
\label{rmk:constants}
The constant in the first lower bound of Theorem~\ref{thm:main} does not depend on $p$ and $q$. 
Moreover, it follows from Propositions~\ref{prop:dimdependent}, \ref{prop:weakerD3-allpq},
and \ref{prop:weaker-esimates-suffice} below, that  the constant in the first upper bound of 
Theorem~\ref{thm:main} is at most of order 
\[
\begin{dcases}
(p^*\vee q)^{13/2} 
& \text{ if }\,   1/p + 1/q^*<3/2,
\\
(p^*\vee q)^{5/2} 
& \text{ if }\,  1/p + 1/q^*\ge 3/2.
\end{dcases}
\]
\end{rmk}

\begin{rmk}
\label{rmk:infty}
Recall that the two-sided  bounds for $\Ex\|G_A\|_{p\to q}$ were known before 
 if $p=q=2$ -- in this case we provide a new proof, 
which does not use the spectral methods as the previously known proof did. 
Moreover, inequalities \eqref{eq:ineq-p=1} and \eqref{eq:ineq-p=1-dual} show that in order to prove Conjecture~\ref{conj:lplq} it suffices to restrict ourselves 
to the case $p^*, q\in [2,\infty)$ in Theorem~\ref{thm:main}.
\end{rmk}

Although we were able to confirm Conjecture~\ref{conj:lplq}, 
our methods do not allow us to retrieve the exact dependence of $\Ex\|G_A\|_{p,q}$ on 
$p^*$ and $q$ in Theorem~\ref{thm:main}. For example, if $a_{i,j}=1$ for all $i\le m$ 
and $j\le n$, then 
\[
\Ex\|G_A\|_{p,q} \sim  \sqrt{p^* \wedge \Log n}\max_i \|(a_{ij})_j\|_{p^*} 
+  \sqrt{q\wedge \Log m}\max_j \|(a_{ij})_i\|_{q}
\]
(the third term disappears since, in this case, it is upper bounded by the sum of the 
first two terms), 
whereas the constant in the first upper bound in Theorem~\ref{thm:main}
grows like $(p^*\vee q)^\gamma$ with 
$\gamma>1$.
However, we conjecture that the correct dependence of parameters $p$ and $q$ in the range 
$p\le 2\le q$ is the following.

\begin{conj}
For every $p\le 2\le q$ and every deterministic $m\times n$ matrix $A=(a_{ij})_{i\le m,j\le n}$,
\begin{align*}
\Ex\|G_A\|_{p,q} & \lesssim 
 \sqrt{p^* \wedge \Log n}\max_i \|(a_{ij})_j\|_{p^*} 
+ \sqrt{q\wedge \Log m}\max_j \|(a_{ij})_i\|_{q}  
\\ 
& \qquad+ \Ex\max_{i,j}|a_{ij}g_{ij}|.
\end{align*}

\end{conj}

%%%%%%%%%%%%%%%%%%%%%%%%%%%%
%%%%%%%%%%%%%%%%%%%%%%%%%%%%%%
\subsection{Consequences of the main result}

Let us now present a couple of consequences of Theorem~\ref{thm:main}. Some of them are immediate and the rest is proven in Section~\ref{sect:proof-main-thm}.

Theorem~\ref{thm:main} and inequalities~\eqref{eq:ineq-p=1} and \eqref{eq:ineq-p=1-dual} 
easily imply their  non-centered counterpart:
\[
\Ex\|(a_{ij}g_{ij}+m_{ij})_{i,j}\|_{p\to q}
\sim_{p,q} 
\Ex\max_i \|(a_{ij}g_{ ij})_j\|_{p^*} +\Ex\max_j \|(a_{ij}g_{ ij})_i\|_{q} 
+ \|(m_{ij})_{i,j}\|_{p\to q}
\]
for every $m_{ij},a_{ij}\in \er$, $i\le m$, $j\le n$, and every  $p^*, q\ge 2$.

Moreover, Theorem~\ref{thm:main}, inequalities~\eqref{eq:ineq-p=1} and \eqref{eq:ineq-p=1-dual}, and 
\cite[Proposition~1.2]{APSS} yield the following 
characterisation of the boundedness of Gaussian linear operators from $\ell_p$ to $\ell_q$ 
whenever $p^*, q\ge 2$.
We say that a matrix $B= (b_{ij})_{i,j\in \en}$ 
defines a bounded operator from $\ell_p$ to $\ell_q$ if for all $x \in \ell_p$ the product 
$B x$ is well defined, belongs to $\ell_q$, and the corresponding linear operator is bounded.

\begin{cor}
\label{cor:bd-operators}
Let  $p^*, q\in [2,\infty]$, and let $(a_{ij})_{i,j\in \en}$ 
be an infinite deterministic real matrix. The matrix $(a_{ij}g_{ij})_{i,j\in \en}$  
defines a bounded linear operator between $\ell_p$ and $\ell_q$ almost surely 
if and only if  
 $\sup_i\|(a_{ij})_j\|_{p^*}<\infty$, $\sup_{j}\|(a_{ij})_i\|_q<\infty$, and  
$\Ex \sup_{i,j\in\en} |a_{ij}g_{ij}|<\infty$.
\end{cor}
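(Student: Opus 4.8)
The plan is to deduce Corollary~\ref{cor:bd-operators} from the finite-dimensional two-sided bound in Theorem~\ref{thm:main} (and Remark~\ref{rmk:infty} for the endpoint cases $p^*$ or $q$ infinite) by a truncation-and-limiting argument, combined with the zero-one law machinery from \cite[Proposition~1.2]{APSS}. First I would record the three candidate finiteness conditions, namely $C_1:=\sup_i\|(a_{ij})_j\|_{p^*}<\infty$, $C_2:=\sup_j\|(a_{ij})_i\|_q<\infty$, and $C_3:=\Ex\sup_{i,j\in\en}|a_{ij}g_{ij}|<\infty$, and observe that each of the three terms appearing on the right-hand side of Theorem~\ref{thm:main} is monotone under passing from a finite submatrix $A_N=(a_{ij})_{i,j\le N}$ to a larger one: $\max_{i\le N}\|(a_{ij})_{j\le N}\|_{p^*}$, $\max_{j\le N}\|(a_{ij})_{i\le N}\|_q$, and $\Ex\max_{i,j\le N}|a_{ij}g_{ij}|$ all increase in $N$, and by monotone convergence their limits are exactly $C_1$, $C_2$, $C_3$ (here one uses that for $p^*$ or $q$ finite the $\ell_{p^*}$- and $\ell_q$-norms of a row/column are suprema of the norms of their finite truncations, and for the infinite cases Remark~\ref{rmk:infty} already expresses $\Ex\|G_{A_N}\|_{p\to q}$ in terms of the relevant quantities).

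Next, for the \emph{sufficiency} direction, suppose $C_1,C_2,C_3<\infty$. Then Theorem~\ref{thm:main} gives $\sup_N \Ex\|G_{A_N}\|_{p\to q}\lesssim_{p,q} C_1+C_2+C_3<\infty$. Since $\|G_{A_N}\|_{p\to q}$ is nondecreasing in $N$, by monotone convergence $\Ex\bigl(\lim_N\|G_{A_N}\|_{p\to q}\bigr)<\infty$, so almost surely $\sup_N\|G_{A_N}\|_{p\to q}<\infty$. A standard approximation argument then shows that on this event the full matrix $(a_{ij}g_{ij})$ acts boundedly: for $x\in\ell_p$, the partial sums defining $(G_A x)_i$ converge (each row is in $\ell_{p^*}$ since $\sup_i\|(a_{ij}g_{ij})_j\|_{p^*}<\infty$ a.s.\ — this itself needs $C_1,C_3<\infty$ and a one-dimensional Gaussian estimate), the truncated images $G_{A_N}x$ are Cauchy in $\ell_q$ with limit $G_A x$, and $\|G_A x\|_q=\lim_N\|G_{A_N} P_N x\|_q\le(\sup_N\|G_{A_N}\|_{p\to q})\|x\|_p$. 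For the \emph{necessity} direction, suppose $(a_{ij}g_{ij})$ defines a bounded operator with positive probability; by the zero-one law / tail triviality invoked through \cite[Proposition~1.2]{APSS} this happens almost surely and moreover $\Ex\|G_A\|_{p\to q}<\infty$ (or at least $\sup_N\Ex\|G_{A_N}\|_{p\to q}<\infty$, using monotone convergence and the a.s.\ finiteness of the operator norm together with a suitable integrability input — this is exactly what \cite[Proposition~1.2]{APSS} is designed to supply). Then the lower bound in Theorem~\ref{thm:main}, applied to each $A_N$ and letting $N\to\infty$, forces $C_1,C_2,C_3<\infty$.

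Concretely the steps are: (i) restate the three conditions and the monotonicity of each term; (ii) cite \cite[Proposition~1.2]{APSS} to get the zero-one dichotomy and the equivalence between almost-sure boundedness and finiteness of $\Ex\|G_A\|_{p\to q}:=\sup_N\Ex\|G_{A_N}\|_{p\to q}$; (iii) sufficiency via the upper bound of Theorem~\ref{thm:main} plus monotone convergence and an $\ell_p\to\ell_q$ approximation lemma; (iv) necessity via the lower bound of Theorem~\ref{thm:main} applied to finite sections. The main obstacle I expect is step (iii)'s passage from ``$\sup_N\|G_{A_N}\|_{p\to q}<\infty$ a.s.'' to ``$(a_{ij}g_{ij})$ is a genuine bounded operator on all of $\ell_p$'': one must verify that $G_A x$ is well defined for every $x\in\ell_p$ (not just finitely supported $x$), which requires checking row-wise convergence — equivalently that almost surely every row $(a_{ij}g_{ij})_j$ lies in $\ell_{p^*}$ — and then that the operator obtained as the strong limit of the $G_{A_N}$ genuinely has the $\ell_q$-valued images claimed. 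This is routine functional analysis but is where the Gaussian tail and the hypotheses $C_1,C_3<\infty$ (and, for $p^*=\infty$, a separate argument) actually get used; the probabilistic heavy lifting is entirely outsourced to Theorem~\ref{thm:main} and \cite[Proposition~1.2]{APSS}.
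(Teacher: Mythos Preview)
Your proposal is correct and follows exactly the route the paper indicates: the paper does not give a detailed proof but simply states (just before the corollary) that it follows from Theorem~\ref{thm:main}, Remark~\ref{rmk:infty}, and \cite[Proposition~1.2]{APSS}, which are precisely the three ingredients you invoke. Your elaboration of how these combine --- monotone convergence on finite sections for both directions, with \cite[Proposition~1.2]{APSS} supplying the zero-one law and the equivalence between almost-sure boundedness and $\sup_N\Ex\|G_{A_N}\|_{p\to q}<\infty$ --- is exactly what is intended, and in particular the functional-analytic concern you flag in step~(iii) is absorbed by that cited proposition.
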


\begin{rmk}
The condition  $\Ex \sup_{i,j\in\en} |a_{ij}g_{ij}|<\infty$ in Corollary~\ref{cor:bd-operators} 
is equivalent to the deterministic bound
\[
\sup_{k\geq 0}\inf_{|I|=k}\sqrt{\Log k}\sup_{(i,j)\in ( \en\times\en)\setminus I} |a_{ij}|
<\infty
\]
(see estimate \eqref{eq:Emax-vs-max-subsets} below). 
\end{rmk}

Theorem~\ref{thm:main} easily implies two-sided bounds for norms of 
Gaussian mixtures. 
We say that a random  variable $X$ is a Gaussian mixture if there exists a 
nonnegative random variable  $R$ such that $X$ has the same distribution as $Rg$,  
where $g$ is a standard Gaussian random variable, independent of $R$  (cf. \cite{ENT}). 
The next corollary is an immediate consequence of Theorem~\ref{thm:main} and 
inequalities~\eqref{eq:ineq-p=1} and \eqref{eq:ineq-p=1-dual}.
In all the corollaries of this subsection the constants depending on $p$ and $q$ 
are  at most of order $q_0^7$,  provided that $p^*,q\in[2,q_0]$.

\begin{cor}
\label{cor:mixtures}
Assume that $p^*, q\in [2,\infty]$ and let $X_{ij}$, $i\le m,j\le n$, be independent 
Gaussian mixtures. Then
\begin{align*}
\Ex\|(X_{ij})_{i\le m, j\le n}\|_{p\to q}
&\sim_{p,q} 
\Ex \max_i \|(X_{ij})_j\|_{p^*} + \Ex \max_j \|(X_{ij})_i\|_{q}.
\end{align*}
\end{cor}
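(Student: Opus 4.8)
The plan is to reduce the statement for independent Gaussian mixtures $X_{ij}\overset{d}{=}R_{ij}g_{ij}$ to Theorem~\ref{thm:main} by conditioning on the radii. First I would write $X_{ij}=R_{ij}g_{ij}$ where $(R_{ij})$ and $(g_{ij})$ are independent and $(g_{ij})$ is a standard Gaussian matrix. Conditioning on $R=(R_{ij})$, the matrix $(X_{ij})$ becomes a structured Gaussian matrix $G_A$ with $a_{ij}=R_{ij}$, so Theorem~\ref{thm:main} applied conditionally (in the form of the last equivalence there, which rephrases the norm in terms of $\Ex\max_i\|(a_{ij}g_{ij})_j\|_{p^*}+\Ex\max_j\|(a_{ij}g_{ij})_i\|_q$) gives, for almost every realization of $R$,
\[
\Ex_g\|(R_{ij}g_{ij})_{i,j}\|_{p\to q}\sim_{p,q}\Ex_g\max_i\|(R_{ij}g_{ij})_j\|_{p^*}+\Ex_g\max_j\|(R_{ij}g_{ij})_i\|_q.
\]
Then I would take the expectation over $R$ on both sides, using Fubini (all quantities are nonnegative and measurable), to obtain
\[
\Ex\|(X_{ij})_{i,j}\|_{p\to q}\sim_{p,q}\Ex\max_i\|(X_{ij})_j\|_{p^*}+\Ex\max_j\|(X_{ij})_i\|_q,
\]
which is exactly the claimed equivalence.

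There is essentially nothing hard here: the only point requiring a line of justification is that $R$ has a well-defined (possibly infinite) contribution, and that if the right-hand side is infinite the left-hand side is too and vice versa, so that the equivalence is meaningful even without integrability assumptions; this follows because each of the three expressions appearing is a nonnegative measurable function of $R$ and the conditional comparison holds pointwise in $R$ with constants depending only on $p,q$. One should also note the minor bookkeeping that $X_{ij}$ and $-X_{ij}$ have the same distribution and that $\Ex\max_{i,j}|X_{ij}|$ may be subsumed, on the level of the two displayed sums, exactly as the third term is absorbed in the last line of Theorem~\ref{thm:main}; no extra work is needed because we quote that line directly.

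I expect the ``main obstacle'' to be purely cosmetic: making sure the conditional application of Theorem~\ref{thm:main} is legitimate, i.e.\ that for a fixed deterministic matrix $A$ the theorem is being used exactly as stated, and that measurability in $R$ is not an issue (it is not, since norms and maxima of finitely many continuous functions are continuous). Since the index sets are finite ($i\le m$, $j\le n$), there are no convergence subtleties, and the corollary follows in a few lines. In the writeup I would simply state the conditioning argument, invoke the last equivalence of Theorem~\ref{thm:main}, and integrate.
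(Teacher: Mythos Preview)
Your proposal is correct and matches the paper's approach: the paper simply declares the corollary an immediate consequence of Theorem~\ref{thm:main}, and your conditioning-on-$R$ argument is exactly the one-line justification behind that claim. There is nothing more to add.
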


We say that  $X$ is a symmetric Weibull random variable with (shape) parameter $r\in~(0,\infty]$ 
if  $X$ is symmetric and for every $t\ge 0$,
\[
\Pr(|X|\ge t)=e^{-t^r}.
\]

\begin{cor}	
\label{cor:Weibulls-below4}
Let $X_{ij}$, $i\le m$, $j\le n$, be independent symmetric Weibull variables with parameter 
$r\in(0,2]$. Then for every  $p^*, q\in [2,\infty)$,  
and every deterministic matrix $A=(a_{ij})_{i\le m,j\le n}$ we have
\begin{align*}
\Ex\| (a_{ij}X_{ij} & )_{i\le m, j\le n}\|_{p\to q}
\\ &\sim_{p,q,r} 
\max_i \|(a_{ij})_j\|_{p^*} +\max_j \|(a_{ij})_i\|_{q} +\Ex\max_{i,j}|a_{ij}X_{ij}|
\\ & \sim_{  r} 
\max_i \|(a_{ij})_j\|_{p^*} +\max_j \|(a_{ij})_i\|_{q} 
+\max_{k\ge 0}\inf_{|I|=k}\Log^{1/r} k\max_{(i,j)\notin I} |a_{ij}|
\\
& \sim_{ p,q, r} 
\max_i \|(a_{ij})_j\|_{p^*} +\max_j \|(a_{ij})_i\|_{q} 
+\max_{k\ge 0}\inf_{ |I|=|J|=k}\Log^{1/r}k\max_{ i\notin I, j\notin J} |a_{ij}|
\\
&\sim_{p,q,r} 
\Ex \max_i \|(a_{ij}X_{ij})_j\|_{p^*} + \Ex \max_j \|(a_{ij}X_{ij})_i\|_{q}.
\end{align*}
Moreover, if  $p^*, q\in [2,\infty]$, then 
\begin{align*}
\Ex\| (a_{ij}X_{ij} & )_{i\le m, j\le n}\|_{p\to q}
\\ &\sim_{p,q,r} 
\max_i \|(a_{ij})_j\|_{p^*} +\max_j \|(a_{ij})_i\|_{q} +\Ex\max_{i,j}|a_{ij}X_{ij}|
\\ & \sim_{  r} 
\max_i \|(a_{ij})_j\|_{p^*} +\max_j \|(a_{ij})_i\|_{q} 
+\max_{k\ge 0}\inf_{|I|=k}\Log^{1/r} k\max_{(i,j)\notin I} |a_{ij}|
\\
&\sim_{p,q,r} 
\Ex \max_i \|(a_{ij}X_{ij})_j\|_{p^*} + \Ex \max_j \|(a_{ij}X_{ij})_i\|_{q}.
\end{align*}
\end{cor}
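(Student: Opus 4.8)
The plan is to reduce the Weibull case to the Gaussian case already established in Theorem~\ref{thm:main} via a comparison of the entrywise distributions. The key structural observation is that a symmetric Weibull variable $X$ with parameter $r\in(0,2]$ is a \emph{Gaussian mixture}: since $t\mapsto e^{-t^r}$ with $r\le 2$ is (after the substitution $t=\sqrt{s}$) the Laplace-type transform of a nonnegative measure, one has $X\stackrel{d}{=}Rg$ for a suitable nonnegative $R$ independent of $g$. This is classical (the function $u\mapsto e^{-u^{r/2}}$ is completely monotone for $r/2\le 1$, hence a mixture of exponentials $\int e^{-\lambda u}\,d\mu(\lambda)$, which upon matching $\lambda=1/(2\sigma^2)$ exhibits the Gaussian-mixture representation). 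Therefore each $a_{ij}X_{ij}$ is itself a Gaussian mixture, and the first $\sim_{p,q}$ line, namely
\[
\Ex\|(a_{ij}X_{ij})_{i\le m,j\le n}\|_{p\to q}\sim_{p,q,r}\Ex\max_i\|(a_{ij}X_{ij})_j\|_{p^*}+\Ex\max_j\|(a_{ij}X_{ij})_i\|_q,
\]
would follow \emph{immediately} from Corollary~\ref{cor:mixtures} (no $r$-dependence even enters here). That already yields the last $\sim$ line of the corollary as well.

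What remains is to identify the three deterministic expressions in the middle lines. First I would replace the mixture expression by an explicit deterministic one: conditioning on the radii and using the Gaussian case inside, one gets $\Ex\max_i\|(a_{ij}X_{ij})_j\|_{p^*}\sim_{p,q}\max_i\|(a_{ij})_j\|_{p^*}+\Ex\max_{i,j}|a_{ij}X_{ij}|$ up to the Gaussian bounds, and symmetrically for the $q$-direction; summing gives the first displayed line with $\Ex\max_{i,j}|a_{ij}X_{ij}|$ as the third term. The heart of the matter is then the two chains identifying
\[
\Ex\max_{i,j}|a_{ij}X_{ij}|\sim_r\max_{k\ge 0}\inf_{|I|=k}\Log^{1/r}k\max_{(i,j)\notin I}|a_{ij}|\sim_{p,q,r}\max_{k\ge 0}\inf_{|I|=|J|=k}\Log^{1/r}k\max_{i\notin I,j\notin J}|a_{ij}|.
\]
The first equivalence is the Weibull analogue of the estimate \eqref{eq:Emax-vs-max-subsets} quoted in the text for Gaussians: $\Ex\max_{\alpha\le N}|c_\alpha X_\alpha|$ is, up to constants depending only on $r$, equal to $\max_{k\ge 0}\inf_{|I|=k}\Log^{1/r}k\max_{\alpha\notin I}|c_\alpha|$, because $\Pr(|X|\ge t)=e^{-t^r}$ gives $\|X\|_s\sim_r s^{1/r}$ and hence the $\ell_\infty$-max of $N$ independent copies behaves like $\Log^{1/r}N$; the standard covering/peeling argument over scales $2^k$ of the coefficients then produces exactly the stated formula (the Gaussian case is $r=2$). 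The second equivalence — passing from arbitrary index sets $I$ to product index sets $I\times[n]\cup[m]\times J$ — is purely combinatorial and does not involve $r$ at all: one shows that removing the $k$ rows and $k$ columns with the largest entries is comparable, up to losing a factor absorbed in the constant, to removing the best $k^2$ (or $k$) individual entries; this is precisely the argument underlying the corresponding line of Theorem~\ref{thm:main}, and I would invoke it verbatim.

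The main obstacle — and it is a mild one — is making the Gaussian-mixture representation of the Weibull law fully rigorous with control that is uniform enough to push through Corollary~\ref{cor:mixtures}, i.e.\ verifying complete monotonicity of $u\mapsto e^{-u^{r/2}}$ on $[0,\infty)$ for $r\in(0,2]$ and checking that the resulting mixing variable $R$ is genuinely nonnegative and finite a.s. Everything downstream is then a black-box application of Theorem~\ref{thm:main}/Corollary~\ref{cor:mixtures} together with the elementary moment computation $\|X\|_s\sim_r s^{1/r}$ that converts the Gaussian $\sqrt{\Log k}$ into $\Log^{1/r}k$. I would also double-check the degenerate endpoint $r=2$, where $X$ is (a constant multiple of) a Gaussian and the corollary collapses to Theorem~\ref{thm:main}, as a consistency check on the constants.
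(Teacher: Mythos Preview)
Your central structural claim is false: symmetric Weibull variables with parameter $r\in(1,2]$ are \emph{not} Gaussian mixtures. The density of a symmetric Weibull is $f_X(x)=\tfrac{r}{2}|x|^{r-1}e^{-|x|^r}$, which vanishes at $0$ whenever $r>1$; but any nontrivial Gaussian mixture $Rg$ has strictly positive density at the origin (namely $\Ex[(R\sqrt{2\pi})^{-1}]$). Equivalently, the criterion for being a Gaussian mixture is that $u\mapsto f_X(\sqrt{u})$ be completely monotone, and $u\mapsto \tfrac{r}{2}u^{(r-1)/2}e^{-u^{r/2}}$ fails this for $r>1$. Your complete-monotonicity argument for $e^{-u^{r/2}}$ is correct as far as it goes, but it represents the \emph{survival function} $\Pr(|X|\ge t)$ as a mixture of $e^{-\lambda t^2}$; since $e^{-\lambda t^2}$ is the survival function of a Rayleigh variable, not a Gaussian, this shows only that $|X|^2$ is an exponential mixture, which is not what Corollary~\ref{cor:mixtures} needs. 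So the direct appeal to Corollary~\ref{cor:mixtures} is unjustified for $r\in(1,2]$, and the proof breaks there.

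The paper sidesteps this by never claiming the Weibull itself is a Gaussian mixture. Instead it introduces the auxiliary variables $Y_{ij}=g_{ij}|\tilde g_{ij}|^{2/s}$ with $\tfrac1r=\tfrac12+\tfrac1s$, which \emph{are} Gaussian mixtures by construction and have the same moment growth $(\Ex|Y_{ij}|^\rho)^{1/\rho}\sim_r\rho^{1/r}$ as the Weibull; the moment-comparison lemma \cite[Lemma~4.7]{LvHY} then transfers the estimates back and forth. This specific choice also buys something your abstract mixture argument cannot: because the ``radius'' is a power of a Gaussian, one can rewrite $\|(a_{ij}|\tilde g_{ij}|^{2/s})_j\|_{p^*}$ as $\|(|a_{ij}|^{s/2}\tilde g_{ij})_j\|_{2p^*/s}^{2/s}$ and apply the Gaussian bound \cite[(5.11)]{APSS} with a different exponent. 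Your step ``condition on the radii and use the Gaussian case'' would, even when the mixture representation is valid, leave the term $\Ex_R\max_i\|(a_{ij}R_{ij})_j\|_{p^*}$, which is not obviously $\lesssim D_1$ plus terms already accounted for; the paper's explicit $|\tilde g|^{2/s}$ trick is exactly what makes that reduction clean. Your treatment of $\Ex\max_{i,j}|a_{ij}X_{ij}|\sim_r\max_k\inf_{|I|=k}\Log^{1/r}k\max_{(i,j)\notin I}|a_{ij}|$ and of the product-index-set line is fine and matches the paper.
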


We postpone the proof of Corollary~\ref{cor:Weibulls-below4} to Section~\ref{sect:proof-main-thm}.

\begin{rmk}
\label{rmk:rademisdiff}
One cannot omit the assumption $r\le 2$ in  Corollary~\ref{cor:Weibulls-below4}. 
Indeed, in the limit case $r=\infty$ the entries $X_{ij} =\ve_{ij}$ are independent symmetric 
Bernoulli random variables  and it is known that the behaviour of the expected operator 
norm is different than in the case $r\le 2$ (see  \cite{Seginer}). Moreover, 
it was conjectured in \cite{LS} and proven in \cite{Lbern}  up to a factor of 
order $\log\log\log(mn)$ that
\begin{align*}
\Ex \|(a_{ij}\ve_{ij})_{i,j}\|_{2\to 2} 
& \mathop{\sim}^? \max_i \|(a_{ij})_j\|_{2} +\max_j \|(a_{ij})_i\|_{2} 
\\ 
& \qquad\qquad + \max_{k\ge 0}\inf_{|I|=k} \sup_{\|s\|_2,\|t\|_2\le 1} 
\Bigl\| \sum_{i,j\notin I} a_{ij}\ve_{ij}s_it_j\Bigr\|_{\Log k}	
\\	
& = \Ex \max_i \|(a_{ij}\ve_{ij})_j\|_{2} + \Ex \max_j \|(a_{ij}\ve_{ij})_i\|_{2} 
\\ 
& \qquad \qquad+ \max_{k\ge 0}\inf_{|I|=k} \sup_{\|s\|_2,\|t\|_2\le 1} 
\Bigl\| \sum_{i,j\notin I} a_{ij}\ve_{ij}s_it_j\Bigr\|_{\Log k}	.
\end{align*}
We conjecture that for $p\le 2\le q$,
\begin{align}	
\label{eq:bern-conj-pq}
\Ex \|(a_{ij}\ve_{ij})_{i,j}\|_{p\to q} 
& \mathop{\sim_{p,q}}^? \max_i \|(a_{ij})_j\|_{p^*} +\max_j \|(a_{ij})_i\|_{q} 
\\ 
\notag
& \qquad\qquad + \max_{k\ge 0}\inf_{|I|=k} \sup_{\|s\|_{q^*},\|t\|_p\le 1} 
\Bigl\| \sum_{i,j\notin I} a_{ij}\ve_{ij}s_it_j\Bigr\|_{\Log k}	
\\	
\notag
&  = \Ex \max_i \|(a_{ij}\ve_{ij})_j\|_{p^*} 
+ \Ex \max_j \|(a_{ij}\ve_{ij})_i\|_{q} 
\\ 
\notag
& \qquad \qquad+ \max_{k\ge 0}\inf_{|I|=k} \sup_{\|s\|_{q^*},\|t\|_p\le 1} 
\Bigl\| \sum_{i,j\notin I} a_{ij}\ve_{ij}s_it_j\Bigr\|_{\Log k}.
\end{align}
We also believe that the methods of \cite{Lbern} could be adapted to the case $p^*,q\ge 2$ 
and that -- together with Theorem~\ref{thm:main} -- they would imply \eqref{eq:bern-conj-pq}
up to a polylog factor.
\end{rmk}

The bounds for the expectation of the norm of a random matrix with  independent entries 
satisfying  some mild regularity assumptions automatically imply  bounds for higher moments 
as well as for the tails of this norm.  
Let us state explicitly two such estimates for the structured Gaussian and Weibull 
random matrices.

Theorem~\ref{thm:main}, inequalities~\eqref{eq:ineq-p=1} and \eqref{eq:ineq-p=1-dual},
and the Gaussian concentration yield the following moment and tail bounds.
\begin{cor}	
\label{cor:higher-moments-tailsGauss}
If  $p^*, q\in [2,\infty]$, then for every deterministic matrix 
$A=(a_{ij})_{i\le m,j\le n}$,  $\rho \ge 1$, and $t>0$ we have
\begin{align*}
(\Ex\|G_A\|_{p\to q}^\rho)^{1/\rho}
&\sim_{p,q} 
\max_i \|(a_{ij})_j\|_{p^*} +\max_j \|(a_{ij})_i\|_{q} 
+\max_{k\ge 0}\inf_{|I|=k}\sqrt{\Log k}\max_{(i,j)\notin I} |a_{ij}|
\\ 
& \qquad \quad+\sqrt \rho \max_{i,j}|a_{ij}|,
\end{align*}
and
\begin{align*}
\Pr\Bigl(\|G_A\|_{p\to q} 
\ge  C(p,q)\Bigl(&\max_i \|(a_{ij})_j\|_{p^*} +\max_j \|(a_{ij})_i\|_{q} 
\\ 
&+\max_{k\ge 0}\inf_{|I|=k}\sqrt{\Log k}\max_{(i,j)\notin I} |a_{ij}|\Bigr) + t\Bigr)
  \le e^{-t^2/( 2\max_{i,j}a_{ij}^2)}.
\end{align*}
\end{cor}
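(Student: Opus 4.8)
The plan is to derive Corollary~\ref{cor:higher-moments-tailsGauss} from Theorem~\ref{thm:main} together with Gaussian concentration of measure, treating the two assertions separately but with the same underlying principle. First I would observe that the map $(g_{ij})_{i,j}\mapsto \|G_A\|_{p\to q}$ is a Lipschitz function of the Gaussian vector $(g_{ij})$, and compute its Lipschitz constant. Writing $\|G_A\|_{p\to q}=\sup\{\sum_{i,j}a_{ij}g_{ij}s_it_j\colon s\in B_{q^*}^m,\ t\in B_p^n\}$ as a supremum of linear functionals of $(g_{ij})$, the Euclidean norm of the gradient of each functional equals $\bigl(\sum_{i,j}a_{ij}^2s_i^2t_j^2\bigr)^{1/2}\le \max_{i,j}|a_{ij}|\cdot\|s\|_2\|t\|_2$. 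Since $p\le 2\le q$ gives $\|t\|_2\le\|t\|_p\le 1$ and $\|s\|_2\le\|s\|_{q^*}\le 1$ (here $q^*\le 2$), the function is $\sigma$-Lipschitz with $\sigma:=\max_{i,j}|a_{ij}|$. Hence the Gaussian concentration inequality yields $\Pr(\|G_A\|_{p\to q}\ge \Ex\|G_A\|_{p\to q}+t)\le e^{-t^2/(2\sigma^2)}$ for all $t>0$.

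For the tail bound, I would then simply insert the first upper estimate from Theorem~\ref{thm:main}, namely
\[
\Ex\|G_A\|_{p\to q}\le C(p,q)\Bigl(\max_i\|(a_{ij})_j\|_{p^*}+\max_j\|(a_{ij})_i\|_q+\max_{k\ge 0}\inf_{|I|=k}\sqrt{\Log k}\max_{(i,j)\notin I}|a_{ij}|\Bigr),
\]
using the equivalence $\Ex\max_{i,j}|a_{ij}g_{ij}|\sim\max_{k\ge 0}\inf_{|I|=k}\sqrt{\Log k}\max_{(i,j)\notin I}|a_{ij}|$ from Theorem~\ref{thm:main} (estimate~\eqref{eq:Emax-vs-max-subsets}), and absorb the absolute constant into $C(p,q)$. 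Combining this with the concentration bound from the previous paragraph gives exactly the claimed tail estimate (after replacing $\Ex\|G_A\|_{p\to q}$ by the larger right-hand side).

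For the moment bound, the standard route is to integrate the tail. The lower bound on $(\Ex\|G_A\|_{p\to q}^\rho)^{1/\rho}$ is immediate: it dominates $\Ex\|G_A\|_{p\to q}$, which by Theorem~\ref{thm:main} is comparable to the first three displayed terms; and it also dominates a $\rho$-th moment of any single entry $|a_{ij}g_{ij}|$, which is of order $\sqrt\rho\,|a_{ij}|$ for $\rho\ge 1$, giving the $\sqrt\rho\max_{i,j}|a_{ij}|$ term (with constants not depending on $p,q$ at this stage, though the stated equivalence allows $p,q$-dependence anyway). For the upper bound I would use the tail estimate just proved together with the elementary inequality $(\Ex(X-a)_+^\rho)^{1/\rho}\lesssim \sqrt\rho\,\sigma$ valid whenever $\Pr(X\ge a+t)\le e^{-t^2/(2\sigma^2)}$ (obtained by writing $\Ex(X-a)_+^\rho=\rho\int_0^\infty t^{\rho-1}\Pr(X-a\ge t)\,dt$ and recognizing a Gamma integral, so that $(\Ex(X-a)_+^\rho)^{1/\rho}\le (\rho\,\Gamma(\rho/2)2^{\rho/2-1})^{1/\rho}\sigma\lesssim\sqrt\rho\,\sigma$ by Stirling). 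Applying this with $X=\|G_A\|_{p\to q}$, $a$ equal to the three-term right-hand side times $C(p,q)$, and $\sigma=\max_{i,j}|a_{ij}|$, then using the triangle inequality in $L^\rho$, yields $(\Ex\|G_A\|_{p\to q}^\rho)^{1/\rho}\lesssim_{p,q} (\text{three terms}) + \sqrt\rho\max_{i,j}|a_{ij}|$, matching the upper direction.

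There is no serious obstacle here; the only points requiring minor care are the verification that the relevant Lipschitz constant is exactly $\max_{i,j}|a_{ij}|$ (which hinges on the inclusions $B_p^n\subseteq B_2^n$ and $B_{q^*}^m\subseteq B_2^m$ for $p\le 2\le q$), and the bookkeeping of constants so that the $p,q$-dependence is confined to the factor $C(p,q)$ coming from Theorem~\ref{thm:main} while the concentration part contributes only absolute constants. The argument is otherwise a routine combination of Gaussian concentration with the already-established two-sided bound.
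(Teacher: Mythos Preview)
Your proposal is correct and follows exactly the route the paper indicates: the paper merely states that the corollary follows from Theorem~\ref{thm:main} together with Gaussian concentration, and your argument supplies precisely those details (Lipschitz constant $\max_{i,j}|a_{ij}|$ via $B_p^n\subset B_2^n$ and $B_{q^*}^m\subset B_2^m$, then concentration for the tail and integration of the tail for the moments).
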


In the Weibull case,  Corollary~\ref{cor:Weibulls-below4},  \cite[Theorem~1.1 and Corollary~1.3]{LatalaStrzeleckaMat}, and \cite[Lemma~2.19]{APSS}  imply the following.  
(One may also deduce the moreover part, with a constant $C_2$ depending on $p,q$ and $r$, 
from  \eqref{eq:higher-moments-weibull} via Markov's inequality.)

\begin{cor}	
\label{cor:higher-moments-tailsWeibull}
Let $X_{ij}$, $i\le m$, $j\le n$, be independent symmetric Weibull variables with 
parameter $r\in~(0,2]$. If  $p^*, q\in [2,\infty]$, then for every deterministic 
matrix $A=(a_{ij})_{i\le m,j\le n}$, and every $\rho \ge 1$ we have
\begin{align}	
\notag
(\Ex\|(a_{ij}X_{ij})_{i,j}\|_{p\to q}^\rho)^{1/\rho}
\sim_{p,q,r}& 
\max_i \|(a_{ij})_j\|_{p^*} +\max_j \|(a_{ij})_i\|_{q} 
\\ \label{eq:higher-moments-weibull}
& \quad \quad 
+\max_{k\ge 0}\inf_{|I|=k}\Log^{1/r} k\max_{(i,j)\notin I} |a_{ij}|
+ \rho^{1/r} \max_{i,j}|a_{ij}|.
\end{align}
Moreover, for every $t>0$,
\begin{multline*}
\Pr\Bigl(\|(a_{ij}X_{ij})_{i,j}\|_{p\to q} 
\ge  C_1(p,q,r)\Bigl(\max_i \|(a_{ij})_j\|_{p^*} 
+\max_j \|(a_{ij})_i\|_{q} 
\\
+\max_{k\ge 0}\inf_{|I|=k} \Log^{1/r}k\max_{(i,j)\notin I} |a_{ij}|\Bigr) + t\Bigr)
  \le e^{-t^r/( C_2(r)\max_{i,j}|a_{ij}|^r)}.
\end{multline*}
\end{cor}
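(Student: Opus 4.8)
The plan is to bootstrap from the case $\rho=1$, which is precisely Corollary~\ref{cor:Weibulls-below4}, and to obtain general $\rho$ by combining a soft moment-comparison argument with sharp control of the largest entries.

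First I would record the lower bounds in \eqref{eq:higher-moments-weibull}. Since $\rho\mapsto(\Ex\|(a_{ij}X_{ij})_{i,j}\|_{p\to q}^\rho)^{1/\rho}$ is nondecreasing, it dominates $\Ex\|(a_{ij}X_{ij})_{i,j}\|_{p\to q}$, hence, by Corollary~\ref{cor:Weibulls-below4}, a constant multiple (depending on $p,q,r$) of the first three terms on the right-hand side. For the term $\rho^{1/r}\max_{i,j}|a_{ij}|$ I would fix $(i_0,j_0)$ with $|a_{i_0j_0}|=\max_{i,j}|a_{ij}|$ and use $\|M\|_{p\to q}\ge\|Me_{j_0}\|_q\ge|M_{i_0j_0}|$ together with $(\Ex|X_{i_0j_0}|^\rho)^{1/\rho}\sim_r\rho^{1/r}$, which is valid for symmetric Weibull variables with parameter $r$ and $\rho\ge1$ by Stirling applied to $\Gamma(1+\rho/r)$.

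For the upper bound I would invoke \cite[Theorem~1.1 and Corollary~1.3]{LatalaStrzeleckaMat}. The entries $a_{ij}X_{ij}$ are independent, symmetric, and have regularly growing moments with a regularity constant depending only on $r\in(0,2]$, so these results bound $(\Ex\|(a_{ij}X_{ij})_{i,j}\|_{p\to q}^\rho)^{1/\rho}$, up to a constant depending on $p,q,r$, by the first moment $\Ex\|(a_{ij}X_{ij})_{i,j}\|_{p\to q}$ plus the $\rho$-th moment of the peak term $\max_{i,j}|a_{ij}X_{ij}|$. The first moment is handled by Corollary~\ref{cor:Weibulls-below4}; for the peak term \cite[Lemma~2.19]{APSS} gives $(\Ex\max_{i,j}|a_{ij}X_{ij}|^\rho)^{1/\rho}\sim_r\max_{k\ge0}\inf_{|I|=k}(\rho\vee\Log k)^{1/r}\max_{(i,j)\notin I}|a_{ij}|$, and using $(\rho\vee\Log k)^{1/r}\le\rho^{1/r}+(\Log k)^{1/r}$ bounds the right-hand side by $\max_{k\ge0}\inf_{|I|=k}\Log^{1/r}k\max_{(i,j)\notin I}|a_{ij}|+\rho^{1/r}\max_{i,j}|a_{ij}|$. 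Assembling the three pieces gives \eqref{eq:higher-moments-weibull}.

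For the tail bound one may argue directly from the concentration statement \cite[Corollary~1.3]{LatalaStrzeleckaMat}, which yields a constant $C_2$ depending only on $r$; alternatively, and with $C_2$ additionally depending on $p,q$, it follows from \eqref{eq:higher-moments-weibull} via Markov's inequality applied with $\rho$ of order $(t/\max_{i,j}|a_{ij}|)^r$, the regime $t\lesssim\max_{i,j}|a_{ij}|$ being absorbed into $C_1$. I do not foresee a real obstacle in this corollary: the substantive work already sits inside Corollary~\ref{cor:Weibulls-below4} (and thus inside Theorem~\ref{thm:main}), and the only point demanding a little care is matching the combinatorial ``largest entries'' quantity with $(\Ex\max_{i,j}|a_{ij}X_{ij}|^\rho)^{1/\rho}$ --- exactly the content of \cite[Lemma~2.19]{APSS}.
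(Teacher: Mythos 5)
Your proposal is correct and follows exactly the same route as the paper: the paper gives no detailed proof, stating only that Corollary~\ref{cor:Weibulls-below4}, \cite[Theorem~1.1 and Corollary~1.3]{LatalaStrzeleckaMat}, and \cite[Lemma~2.19]{APSS} imply the result, with the alternative derivation of the tail bound via Markov's inequality noted in parentheses --- precisely the ingredients you assemble and in the same order. Your filled-in details (monotonicity of moments plus $\|M\|_{p\to q}\ge|M_{i_0j_0}|$ for the lower bound, and the elementary inequality $(\rho\vee\Log k)^{1/r}\le\rho^{1/r}+(\Log k)^{1/r}$ to split the peak term) are sound.
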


\begin{rmk}
The upper bound in \eqref{eq:higher-moments-weibull} and,  as a consequence, the tail bound 
from Corollary~\ref{cor:higher-moments-tailsWeibull} hold under the weaker assumption that 
the variables $X_{ij}$ are independent, centered, 
and have uniformly bounded $\psi_r$-norm. 
This follows from a standard argument (see, e.g., the proof of \cite[Lemma~2.1]{LSChevet}).
\end{rmk}

The next result is a generalization of \cite[Theorem 2]{Latala-Some-estimates}. 
Its advantage is that we do not need to assume much about the distribution of the entries; 
however, two additional summands appear in the upper bound.
The proof of the following corollary may be found at the end of Section~\ref{sect:proof-main-thm}.

\begin{cor}	
\label{cor:generalentries}
If  $p^*, q\in [2,\infty)$, then for every matrix $(X_{ij})_{ i\le m, j\le n}$ with
independent centered entries,
\begin{align}
\label{eq:genentr2}
\Ex&\|(X_{ij})_{i,j}\|_{p\to q}
\lesssim_{p,q}\max_{i}\Bigl(\sum_{j}\Ex |X_{ij}|^{p^*}\Bigr)^{1/p^*}
+\max_{j}\Bigl(\sum_{i}\Ex |X_{ij}|^{q}\Bigr)^{1/q}
\\
\notag
&\qquad+\Bigl(\sum_i\Bigl(\sum_j\Ex |X_{ij}|^{2p^*}\Bigr)^{\frac{p^*\vee q}{p^*}}\Bigr)
^{\frac{1}{2(p^*\vee q)}}
+\Bigl(\sum_{j}\Bigl(\sum_i\Ex |X_{ij}|^{2q}\Bigr)^{\frac{p^*\vee q}{q}}\Bigr)^{\frac{1}{2(p^*\vee q)}}
\end{align}
and, as a consequence, 
\begin{align*}
\Ex\|(X_{ij})_{i,j}\|_{p\to q}
\lesssim_{p,q}&\max_{i}\Bigl(\sum_{j}\Ex |X_{ij}|^{p^*}\Bigr)^{1/p^*}
+\max_{j}\Bigl(\sum_{i}\Ex |X_{ij}|^{q}\Bigr)^{1/q}
\\
\notag
&\qquad+\Bigl(\sum_{i,j}\Ex |X_{ij}|^{2p^*}\Bigr)^{1/(2p^*)}
+\Bigl(\sum_{i,j}\Ex |X_{ij}|^{2q}\Bigr)^{1/(2q)}.
\end{align*} 
\end{cor}

\begin{rmk}
If $p^*, q\in [2,\infty)$, $\alpha \ge 1$, and independent centered random variables $X_{ij}$ of variance $1$ satisfy 
\begin{equation}	
\label{eq:weak-mom-reg}
(\Ex|X_{ij}|^{2\rho})^{1/(2\rho)}\leq
\alpha (\Ex|X_{ij}|^{\rho})^{1/\rho} \qquad \text{for } \rho\in \{p^*,q\},
\end{equation}
 then by the H{\"o}lder inequality we get
 \[
 \|X_{ij}\|_{2\rho} \sim_{\alpha}\|X_{ij}\|_\rho \sim_{\alpha, \rho} \|X_{ij}\|_2=1 \qquad \text{for } \rho\in \{p^*,q\}.
 \]
 Thus, estimate \eqref{eq:genentr2} yields
\begin{align*}
\Ex\|(X_{ij})_{i,j}\|_{p\to q}
& \lesssim_{p,q,\alpha} n^{1/p^*}+m^{1/q}
+m^{1/(2(p^*\vee q))}n^{1/(2p^*)}
+m^{1/(2q)}n^{1/(2(p^*\vee q))}
\\
&\lesssim_{ p,q,\alpha} n^{1/p^*}+m^{1/q},
\end{align*}
where in the last inequality we used  the AM-GM inequality.
One can repeat the argument from the proof of  the lower bound in 
\cite[Proposition~21]{LSiidmatr} to show that under the above assumptions 
\begin{align*}
\Ex\|(X_{ij})_{i,j}\|_{p\to q}
&\gtrsim_{p,q, \alpha} n^{1/p^*}\|X_{11}\|_{p^*}+m^{1/q}\|X_{11}\|_{q},
\end{align*} so in fact
\[
\Ex\|(X_{ij})_{i,j}\|_{p\to q} \sim_{p,q,\alpha}  n^{1/p^*}\|X_{11}\|_{p^*}+m^{1/q}\|X_{11}\|_{q}.
\]
 We refer to \cite{LSiidmatr} for more precise two-sided bounds (with constants not depending 
 on $p$ and $q$) under the stronger assumption that the entries $X_{ij}$ are iid centered 
 $\alpha$-regular random variables.
\end{rmk}

%%%%%%%%%%%%%%%%%%%%%%%%%%%%
%%%%%%%%%%%%%%%%%%%%%%%%%%%%%%
\subsection{Strategy of the proof of the main result}

Throughout the paper we  denote
\[
D_1\coloneqq \max_i \|(a_{ij})_j\|_{p^*},\quad D_2\coloneqq \max_j \|(a_{ij})_i\|_{q}
\]
to avoid  long formulas for the first two terms on the right-hand side of our main 
estimates.

Similarly as in \cite{LvHY}, Theorem~\ref{thm:main}  is a consequence of two  weaker  
estimates given in Propositions~\ref{prop:dimdependent} and \ref{prop:weakerD3-allpq}.

\begin{prop}
\label{prop:dimdependent}
Assume that $p^*, q\in [2,\infty)$. Then for  every deterministic matrix 
$A=(a_{ij})_{i\le m,j\le n}$,
\begin{equation}
\label{eq:pqsqrtlogmn}
\Ex\|G_A\|_{p\to q}
 \lesssim
 \alpha(p,q) \Bigl(\sqrt{p^*}D_1+\sqrt{q}D_2
 +\sqrt{\Log(mn)}\max_{i,j}|a_{ij}|\Bigr),
\end{equation}
where 
\[
 \alpha(p,q)=
\begin{dcases}
 (p^*\vee q)^{3/2}
& \mbox{if } \, 1/p+1/q^*\geq 3/2 ,
\\
(p^*\vee q)^{11/2}  
& \mbox{if }\,  1/p+1/q^*<3/2 .
\end{dcases}	
\]
\end{prop}

\begin{prop}
\label{prop:weakerD3-allpq}
For every $p^*, q\in [2,\infty)$ and every deterministic matrix $A=(a_{ij})_{i\le m,j\le n}$,
\begin{equation*}
\Ex \|G_A\|_{p\to q}  
 \lesssim 
 \beta(p,q) \bigl(\sqrt{p^*}D_1+\sqrt{q}D_2\bigr) 
 +   \beta'(p,q)\max_{i,j}(i+j)^{(8(p^*\vee q))^{-1}}|a_{ij}| ,
\end{equation*}
where 
$\beta'(p,q)=(p^*\vee q)\beta(p,q)$, $\beta(p,q) = \alpha(p,q)$, and $\alpha(p,q)$ is defined in Proposition~\ref{prop:dimdependent}.
\end{prop}

In the case $p=q=2$ estimate \eqref{eq:pqsqrtlogmn} was proven by Bandeira and van Handel 
in \cite{BvH}. We  cannot  use their combinatorial approach based on the trace method since for 
$(p,q)\neq(2,2)$ we no longer deal with the spectral norm.
Proving Proposition~\ref{prop:dimdependent} is one of the main difficulties and 
novelties of our paper. Using our new methods we also reprove Proposition~\ref{prop:dimdependent} 
in the case $p=q=2$ without relying on spectral tools.

To prove Proposition~\ref{prop:dimdependent} we first obtain the following weaker result with a bigger exponent of the logarithm. Then we perform the exponent reduction described in Section~\ref{subsect:logexp-reduction}.
\begin{prop}	
\label{prop:worseD3'-intro}
If $p^*,q \in [2,\infty)$, then
\begin{align*}
\Ex\|G_A\|_{p\to q} 
& \lesssim 
\sqrt{p^*}D_1+\sqrt{q}D_2+\Log^{ \gamma }(mn)\max_{i,j} |a_{ij}| ,
\end{align*}
where $\gamma \leq 5$ is a universal constant.
\end{prop}

To prove  Proposition~\ref{prop:worseD3'-intro} in the case  $( p^*,q)\notin[2,3]^2$ it suffices -- by duality -- to consider  the case $q\geq p^*$, so that $q>3$. 
We split every vector in $B_{q*}^m$ into a part with a small support and a part with a small supremum norm. 
To deal with vectors with small supports we use a standard net argument. 
In order to bound the supremum over vectors in $B_{q^*}^m$ with small coordinates we first introduce  a certain interpolation norm of $\ell_2^m$- and $\ell_q^m$-norms satisfying a crucial
hypercontractive property. 
To estimate the norm of $G_A$ acting between $\ell_p^n$ and the interpolation space we discretise the $B_p^n$ ball and use an inductive argument; in every induction step we use the Gaussian concentration and the hypercontraction. This approach is implemented in Subsection~\ref{subsect:proof-worseD3'}.

The proof of Proposition~\ref{prop:worseD3'-intro}  in the case  $( p^*,q)\in[2,3]^2$ is based on 
the ideas from the proof of the main result of \cite{vH} (a similar result for $p^*=q\in [2,4)$ 
was obtained in \cite{Matlak}). However, in this case performing the exponent reduction with 
constants which do not blow up when $p^*$ and $q$ are close to $2$ is much more involved than in 
the case $( p^*,q)\notin[2,3]^2$.
The exponent reduction in this case is described in Section~\ref{sect:appendix}.
This is the most challenging part of our new proof of two-sided bounds in the  case $p=q=2$.

Finally, to deduce Proposition~\ref{prop:weakerD3-allpq} we decompose 
the matrix $A$ into block diagonal matrices 
$A_{k}$  with blocks of a smaller size and matrices $B_l$ whose norms are 
 easier to control (due to Proposition~\ref{prop:sqrtIk} below), and use 
Proposition~\ref{prop:dimdependent} for each $A_{k}$ separately.  In the proof of the aforementioned Proposition~\ref{prop:sqrtIk} we use similar tools as in the proof of Proposition~\ref{prop:worseD3'-intro} in the case $( p^*,q)\notin[2,3]^2$ for vectors with small coordinates.

%%%%%%%%%%%%%%%%%%%%%%%%%%%%
%%%%%%%%%%%%%%%%%%%%%%%%%%%%%%

\subsection{Organization of the paper}
In Section~\ref{sect:proof-main-thm} we show how 
Propositions~\ref{prop:dimdependent} and \ref{prop:weakerD3-allpq} imply 
Theorem~\ref{thm:main} 
and then we prove Corollaries~\ref{cor:Weibulls-below4} and \ref{cor:generalentries}.
In Section~\ref{sect:dim-dep} we prove Proposition~\ref{prop:dimdependent} in the case  
$(p^*,q)\notin [2,3]^2$.
In Section~\ref{sect:decomposition} we show how to deduce
Proposition~\ref{prop:weakerD3-allpq} from Proposition~\ref{prop:dimdependent}. 
Finally, in Section~\ref{sect:appendix} we prove  Proposition~\ref{prop:dimdependent} 
in the case $p^*,q\in [2,3]$.

%%%%%%%%%%%%%%%%%%%%%%%%%%%%
%%%%%%%%%%%%%%%%%%%%%%%%%%%%%%

\section{Proof of Theorem~\ref{thm:main} and its corollaries}  
\label{sect:proof-main-thm}

In this section we first show how to deduce the most challenging part of Theorem~\ref{thm:main} 
from Propositions~\ref{prop:dimdependent} and \ref{prop:weakerD3-allpq}. 
Then we  give the proofs of  Theorem~\ref{thm:main} and Corollaries~\ref{cor:Weibulls-below4} 
and \ref{cor:generalentries}. 
 The proofs of Propositions~\ref{prop:dimdependent} and \ref{prop:weakerD3-allpq} 
may be  found in the next sections.

\begin{prop}	
\label{prop:weaker-esimates-suffice}
Let $p^*,q\ge 2$, and $\alpha_1,\alpha_2,\alpha_3,\beta_1,\beta_2,\beta_3\ge 1$. 
Assume that for every integers $m$ and $n$,  and every deterministic matrix 
$A=(a_{ij})_{ i\le m, j\le n}$,
\[
\Ex\|G_A\|_{p\to q}
\le \alpha_1D_1+\alpha_2D_2 +\alpha_3\sqrt{\Log(mn)}\max_{i,j}|a_{ij}| 
\]
and 
\[
\Ex \|G_A\|_{p\to q} 
\le  \beta_1D_1+\beta_2D_2+\beta_3\max_{i,j}\,(i+j)^{ (8 (p^*\vee q))^{-1}}|a_{ij}|.
\]
Then  for  every integers $m$ and $n$, and every deterministic matrix $A=(a_{ij})_{i\le m, j\le n}$,
\begin{align*}
\Ex \|G_A\|_{p\to q} 
&\lesssim (\alpha_1 + \beta_1+\beta_3)D_1+
 (\alpha_2 + \beta_2+\beta_3)D_2
\\ 
&\quad + \alpha_3 \max_{ k\geq 0}\inf_{|I|=k}\sqrt{\Log k}\max_{(i,j)\notin I} |a_{ij}| .
\end{align*}
\end{prop}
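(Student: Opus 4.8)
The plan is to combine the two hypotheses by a dyadic decomposition of $A$ according to the sizes of the entries $|a_{ij}|$. The first hypothesis gives a bound that is good when the matrix is small (few rows/columns), while the second gives a bound whose third term grows only polynomially slowly, like $(i+j)^{1/(8(p^*\vee q))}$, so it is tame for matrices with ``spread out'' large entries. The standard trick (as in \cite{LvHY, Lbern}) is: for each level $k$, isolate the set $I_k$ of the $k$ largest entries of $A$ in absolute value; write $A$ as a sum of a matrix supported on $I_k$ and a matrix whose entries are all at most $\max_{(i,j)\notin I_k}|a_{ij}|=:c_k$. One then has to balance, over $k$, the cost of handling the ``sparse, large'' part against the cost of the ``bounded'' part. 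The target third term $\max_{k\ge 0}\inf_{|I|=k}\sqrt{\Log k}\max_{(i,j)\notin I}|a_{ij}|$ is exactly what such a balancing produces.

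Concretely, first I would reduce to the case where the entries $|a_{ij}|$ are arranged in nonincreasing order along the ``antidiagonal enumeration'' of the index set; more precisely, by relabelling rows and columns we may not be able to make $|a_{ij}|$ monotone, but we can and should assume the entries are sorted so that the $k$ largest occupy a fixed initial segment. Let $a_1^*\ge a_2^*\ge\cdots$ be the nonincreasing rearrangement of $(|a_{ij}|)$. The key quantity is $T:=\max_{k\ge 0}\sqrt{\Log k}\, a_{k+1}^*$, which equals $\max_{k\ge 0}\inf_{|I|=k}\sqrt{\Log k}\max_{(i,j)\notin I}|a_{ij}|$. Split $A=A'+A''$ where $A'$ collects entries $|a_{ij}|>\lambda$ and $A''$ collects the rest, for a threshold $\lambda$ to be chosen (in fact one does a dyadic sum over thresholds $\lambda_r=2^r\lambda_0$). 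For $A''$, every entry is $\le\lambda$, so the second hypothesis applied to $A''$ gives $\Ex\|G_{A''}\|_{p\to q}\lesssim \beta_1 D_1+\beta_2 D_2+\beta_3\lambda\max_{i,j}(i+j)^{1/(8(p^*\vee q))}$; but since we may throw away all but the entries actually present, and there are at most, say, $N_r$ such entries at level $r$, this is controlled — the slow growth exponent is what makes $\lambda\cdot(\text{count})^{1/(8(p^*\vee q))}$ summable against the target. For $A'$, which has few entries, one uses the first hypothesis on the minimal submatrix containing its support: its dimensions $m',n'$ satisfy $m'n'\le(\#\text{entries})^2$ roughly, so $\sqrt{\Log(m'n')}\lesssim\sqrt{\Log(\#\text{entries})}$, and $\#\{(i,j):|a_{ij}|>\lambda_r\}\le$ (number of entries exceeding $\lambda_r$), which by definition of the nonincreasing rearrangement is controlled so that $\alpha_3\sqrt{\Log(m'n')}\lambda_r\lesssim\alpha_3 T$ after the dyadic sum.

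I would then organize the dyadic sum carefully: set $\lambda_r$ so that the number of entries exceeding $\lambda_r$ roughly doubles (or: let $I_r$ be the set of the $2^r$ largest entries and $c_r=\max_{(i,j)\notin I_r}|a_{ij}|$), write $A=\sum_r (A_{I_{r+1}\setminus I_r})$ plus a tail, apply the first hypothesis to each block $A_{I_{r+1}\setminus I_r}$ (which has at most $2^{r+1}$ nonzero entries, hence fits in an $m'\times n'$ submatrix with $m'n'\le 2^{2r+2}$, so $\sqrt{\Log(m'n')}\lesssim\sqrt{r}\sim\sqrt{\Log 2^r}$) to get a contribution $\lesssim\alpha_1 D_1^{(r)}+\alpha_2 D_2^{(r)}+\alpha_3\sqrt{\Log 2^r}\,c_r$, then sum over $r$ using $\sqrt{\Log 2^r}\,c_r\le T$ together with the fact that $c_r$ decays geometrically along a subsequence (because $a_k^*$ can be large only on a logarithmically thin set of scales once we've extracted $T$), and using $\sum_r D_i^{(r)}\lesssim D_i$ because the blocks are disjoint in their supports and the $\ell_{p^*}$/$\ell_q$ norms of rows/columns are superadditive in the right direction. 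The honest bookkeeping — choosing whether to iterate the first bound on geometric blocks or to split off one big threshold and apply the second bound to the remainder, and making the two strategies meet — is the main obstacle; this is precisely the ``exponent reduction'' / block-decomposition argument and its delicacy lies in ensuring the geometric series converges with an absolute constant and that the $D_1,D_2$ terms do not accumulate a logarithmic factor. The presence of the $\beta_3$-term with the small exponent $1/(8(p^*\vee q))$ is what lets the ``bounded remainder'' be absorbed into $D_1+D_2$ up to the third target term, so I would make sure to use the second hypothesis exactly once, on the matrix with all entries below the final threshold, and handle everything above by iterating the first hypothesis on a geometric tower of blocks.
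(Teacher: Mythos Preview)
Your approach has a genuine gap in the handling of the $D_1,D_2$ terms when summing over dyadic levels. You claim that ``$\sum_r D_i^{(r)}\lesssim D_i$ because the blocks are disjoint in their supports and the $\ell_{p^*}$/$\ell_q$ norms of rows/columns are superadditive in the right direction.'' This is false: for a single row split into $R$ disjoint blocks one has $\|(a_{ij})_j\|_{p^*}^{p^*}=\sum_r\|(a_{ij})_{j\in B_r}\|_{p^*}^{p^*}$, which yields $D_1^{(r)}\le D_1$ for each $r$ but says nothing about $\sum_r D_1^{(r)}$; in fact this sum can be of order $R^{1-1/p^*}D_1$. Since the number of dyadic levels in your scheme is $\sim\Log\Log(mn)$, applying the first hypothesis level by level and summing produces an extra $\Log\Log(mn)$ factor in front of $D_1+D_2$, which is precisely what the proposition is designed to avoid. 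Your plan for the second hypothesis has a parallel problem: applying it ``once, on the matrix with all entries below the final threshold $\lambda$'' gives a third term $\beta_3\lambda\max_{i,j}(i+j)^{1/(8(p^*\vee q))}$, and without further structure there is no reason for this to be bounded by $D_1+D_2+T$.

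The paper's proof circumvents both issues by a different mechanism. Rather than decomposing by entry size and summing, it constructs (via a careful iterative choice of row and column permutations, at doubly-exponential scales $N_k=2^{2^k}$) two overlapping \emph{block-diagonal} matrices $U,V$ and an off-diagonal remainder $W$, with $G_A=U+V+W$. For the block-diagonal pieces one uses that $\|U\|_{p\to q}=\max_k\|U_k\|_{p\to q}$ (Lemma~\ref{lem:block-matr}, valid since $p\le q$), so the first hypothesis is applied to each block but the results are combined through a \emph{maximum}, not a sum; Gaussian concentration plus a short moment argument then controls $\Ex\max_k\|U_k\|_{p\to q}$ by $\sup_k\Ex\|U_k\|_{p\to q}$ up to an absolute constant, with no $\Log\Log$ loss. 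For $W$, the permutation is built so that every off-diagonal entry satisfies $|a_{ij}|\lesssim D_2\cdot 2^{-2^{k-1}/q}$ or $|a_{ij}|\lesssim D_1\cdot 2^{-2^{k-1}/p^*}$ for a suitable $k$ tied to its position, which forces $(i+j)^{1/(8(p^*\vee q))}|a_{ij}|\lesssim D_1+D_2$ uniformly on $E_3$; a \emph{single} application of the second hypothesis to $W$ then gives $\Ex\|W\|_{p\to q}\lesssim(\beta_1+\beta_3)D_1+(\beta_2+\beta_3)D_2$, with no $D_\infty$ contribution at all. The essential idea you are missing is this row/column permutation that simultaneously creates block-diagonal structure (so one can take a max) and forces geometric decay of the off-diagonal entries (so the polynomial weight in the second hypothesis is absorbed by $D_1+D_2$).
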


We will need the following well-known deterministic lemma about norms of  block diagonal matrices.
\begin{lem}	
\label{lem:block-matr}
Let $(c_{ij})_{i\le m, j\le n}$ be a block diagonal matrix with blocks $C_l$, and $1\le p\le q$. 
Then
\[
\|(c_{ij})_{i\le m, j\le n}\|_{p\to q} = \max_{l} \|C_l\|_{p\to q}.
\]
\end{lem}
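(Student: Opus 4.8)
The plan is to prove Lemma~\ref{lem:block-matr} directly from the definition of the operator norm, exploiting the block-diagonal structure to decouple the supremum over the input ball into a supremum over each block, and then—crucially—using the inequality $p\le q$ to recombine the per-block optima into a single bound.

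\medskip

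First I would set up coordinates: say the rows are partitioned into consecutive blocks $R_1,\dots,R_N$ and the columns into $S_1,\dots,S_N$, with the $l$-th block $C_l=(c_{ij})_{i\in R_l,\,j\in S_l}$ and all entries outside $\bigcup_l R_l\times S_l$ equal to zero. For $t\in\er^n$ write $t=(t^{(1)},\dots,t^{(N)})$ with $t^{(l)}\in\er^{|S_l|}$, and similarly decompose the output. Since $Ct$ has, on the block $R_l$, the value $C_l t^{(l)}$ and $0$ elsewhere, we get $\|Ct\|_q^q=\sum_l \|C_l t^{(l)}\|_q^q$ while $\|t\|_p^p=\sum_l\|t^{(l)}\|_p^p$. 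The easy direction is ``$\ge$'': testing $C$ on a vector supported in a single block $S_l$ shows $\|C\|_{p\to q}\ge\|C_l\|_{p\to q}$ for each $l$, hence $\|C\|_{p\to q}\ge\max_l\|C_l\|_{p\to q}$.

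\medskip

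For the reverse inequality, fix $t\in B_p^n$ and put $r_l:=\|t^{(l)}\|_p$, so $\sum_l r_l^p\le 1$. If $r_l>0$ then $t^{(l)}/r_l\in B_p^{|S_l|}$, whence $\|C_l t^{(l)}\|_q\le r_l\|C_l\|_{p\to q}\le r_l\max_j\|C_j\|_{p\to q}$; this trivially holds also when $r_l=0$. Therefore
\[
\|Ct\|_q^q=\sum_l\|C_l t^{(l)}\|_q^q\le \Bigl(\max_j\|C_j\|_{p\to q}\Bigr)^q\sum_l r_l^q.
\]
Now since $q\ge p$ and $0\le r_l\le 1$ (each $r_l^p\le\sum_j r_j^p\le 1$), we have $r_l^q\le r_l^p$, so $\sum_l r_l^q\le\sum_l r_l^p\le 1$. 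Hence $\|Ct\|_q\le\max_j\|C_j\|_{p\to q}$, and taking the supremum over $t\in B_p^n$ gives $\|C\|_{p\to q}\le\max_j\|C_j\|_{p\to q}$. Combining the two inequalities yields the claim. (If $q=\infty$ the last step is even simpler: $\|Ct\|_\infty=\max_l\|C_l t^{(l)}\|_\infty\le\max_l r_l\max_j\|C_j\|_{p\to q}\le\max_j\|C_j\|_{p\to q}$ since $r_l\le 1$.)

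\medskip

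The only real subtlety—the ``main obstacle'', though it is mild—is the passage $\sum_l r_l^q\le 1$, which is exactly where the hypothesis $p\le q$ is used and which would fail for $q<p$; one must notice that each $r_l$ is individually at most $1$ so that raising to a larger power only decreases the sum. Everything else is bookkeeping about how block-diagonal matrices act on decomposed vectors, so I would keep the write-up short.
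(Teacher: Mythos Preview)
Your proof is correct and follows essentially the same route as the paper's: both decompose vectors along the blocks and use the embedding $\ell_p\hookrightarrow\ell_q$ for $p\le q$ at the key step. The only cosmetic difference is that the paper works with the bilinear form $\sup_{s\in B_{q^*}^m,\,t\in B_p^n}\sum c_{ij}s_it_j$ and first optimizes over $s$, whereas you work directly with $\|Ct\|_q$; your inequality $\sum_l r_l^q\le\sum_l r_l^p\le 1$ is exactly the paper's observation that $\sup_{y\in B_p^{l_0}}\bigl(\sum_l|y_l|^q\|C_l\|_{p\to q}^q\bigr)^{1/q}$ is attained at a basis vector.
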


\begin{proof}
Assume that the block $C_l$, $l\le l_0$, consists of entries $a_{ij}$ such that $i\in I_l$ 
and $j\in J_l$. Then
\begin{align*}
\|(c_{ij})_{i\le m, j\le n}\|_{p\to q} 
&= 
\sup_{s\in B_{q^*}^m, t\in B_{p}^n} \sum_{i,j}c_{ij}s_it_j
\\ 
&=
\sup_{x\in B_{q^*}^{l_0}, y\in B_{p}^{l_0}} \sum_{l=1}^{l_0} 
\sup_{s\in x_lB_{q^*}^{I_l}, t\in y_l B_p^{J_l}} \sum_{i\in I_l,j\in J_l}c_{ij}s_it_j
\\ 
&=
\sup_{x\in B_{q^*}^{l_0}, y\in B_{p}^{l_0}} \sum_{l=1}^{l_0} x_l y_l\|C_l\|_{p\to q}
= \sup_{y\in B_{p}^{l_0}}\Bigl( \sum_{l=1}^{l_0}  |y_l|^q\|C_l\|_{p\to q}^{q} \Bigr)^{1/q}.
\end{align*}
Since $p\le q$ and $q\ge 1$, the latter supremum is attained at $y=e_l$ for some $l\le l_0$, so
\begin{equation*}
\sup_{y\in B_{p}^{l_0}}\Bigl( \sum_{l=1}^{l_0}  |y_l|^q\|C_l\|_{p\to q}^{q} \Bigr)^{1/q}
= \max_l  \|C_{l}\|_{p\to q}.
\qedhere
\end{equation*}	
\end{proof}

\begin{proof}[Proof of Proposition~\ref{prop:weaker-esimates-suffice}]
Let
\[
D_\infty\coloneqq \max_{ k\geq 0}\inf_{|I|=k}\max_{ (i,j)\notin I} \sqrt{\Log k}\, |a_{ij}|,
\]
$N_0=1$, and $N_k=2^{2^k}$ for $k\geq 1$.
Without loss of generality  we may assume that $n=m=N_{k_0}$ for some $k_0$; if necessary, 
we simply add zero rows and columns.

We follow the ideas of the proof of \cite[Theorem 3.9]{LvHY} and  \cite[Remark 4.5]{LS}, 
starting with  constructing suitable permutations $(i_1,\ldots,i_{N_{k_0}})$  and 
$(j_1,\ldots,j_{N_{k_0}})$ of $\{1,\ldots, N_{k_0}\}$. 
(Note that the change of  order of rows and columns does not change $\Ex\|G_A\|_{p\to q}$.) 
Then we  decompose the matrix $G_A$ and bound each piece of this decomposition separately, 
each time using one of the assumptions of the proposition.

In the first step we choose $I_1=\{i_1,\ldots,i_{N_1}\}$ and $J_1=\{j_1,\ldots,j_{N_1}\}$ 
in such a way that
\begin{equation*}
\max_{i\notin I_1}\max_j |a_{ij}|\leq \frac{D_\infty}{\sqrt{\Log N_1}} \quad\mbox{and}\quad
\max_{j\notin J_1}\max_i |a_{ij}|\leq \frac{D_\infty}{\sqrt{\Log N_1}}. 
\end{equation*}

Suppose now that we have selected $I_k=\{i_1,\ldots,i_{N_k}\}$ and $J_k=\{i_1,\ldots,j_{N_k}\}$ 
for $k<k_0$. To construct $(i_{N_k+1},\ldots,i_{N_{k+1}})$ we choose  first $N_{k}N_{k-1}$ 
indices $i$  from  $[m]\setminus I_k$ that contain the 
$N_{k-1}$ largest moduli of entries $|a_{ij}|$ from each column $j\in J_k$. Next, among remaining indices we
choose $N_{k+1}-N_k-N_{k}N_{k-1}\geq N_{k}$ indices $i$ in such a way that  
$I_{k+1}=\{i_1,\ldots,i_{N_{k+1}}\}$ satisfies
\begin{equation}
\label{eq:estbyDinfty2}
\max_{i\notin I_{k+1}}\max_j |a_{ij}|\leq \frac{D_\infty}{\sqrt{\Log N_k}}.
\end{equation}
Similarly, to construct $(j_{N_k+1},\ldots,j_{N_{k+1}})$ we choose first $N_{k}N_{k-1}$ indices $j$ from  $[n]\setminus J_k$ 
 that contain the 
$N_{k-1}$ largest moduli of entries $|a_{ij}|$ from each row $i\in I_k$. Next, among remaining indices we
choose $N_{k+1}-N_k-N_{k}N_{k-1}\geq N_{k}$ indices $j$ in such a way that  
$J_{k+1}=\{j_1,\ldots,j_{N_{k+1}}\}$ satisfies
\begin{equation*}
%\label{eq:estbyDinfty3}
\max_{j\notin J_{k+1}}\max_i |a_{ij}|\leq \frac{D_\infty}{\sqrt{\Log N_k}}.
\end{equation*}

The above construction implies in particular that for $k\ge 1$,
\begin{align}
\label{eq:estoffdiag1}
|a_{ij}|&\leq D_2N_{k-1}^{-1/q}\le 2 D_22^{-2^{k-1}/q} &&\mbox{ if }j\leq N_k 
\\ \notag &&& \quad \mbox{ and }i\geq M_k\coloneqq N_k+N_kN_{k-1},
\\ 
\label{eq:estoffdiag2}
|a_{ij}|&\leq D_1N_{k-1}^{-1/p^*}\le 2 D_12^{-2^{k-1}/p^*} && \mbox{ if }i\leq N_k \mbox{ and }j\geq M_k.
\end{align}

We set (see Figure~\ref{fig:opdecomp}, which comes from \cite{LvHY}; we provide it here for the reader's convenience)\
\begin{gather*}
E_1\coloneqq [1,M_1]^2\cup\bigcup_{k\geq 1}[N_{2k}+1,M_{2k+1}\wedge N_{k_0}]^2,
\\
E_2\coloneqq \bigcup_{k\geq 1}[N_{2k-1}+1,M_{2k}\wedge N_{k_0}]^2\setminus E_1,
\qquad E_3\coloneqq [1,N_{k_0}]^2\setminus(E_1\cup E_2),
\end{gather*}
and write $G_A=U+V+W$, where 
\[
U_{ij}\coloneqq a_{ij}g_{ij}\ind_{\{(i,j)\in E_1\}},\quad V_{ij}\coloneqq a_{ij}g_{ij}\ind_{\{(i,j)\in E_2\}},
\quad W_{ij}\coloneqq a_{ij}g_{ij}\ind_{\{(i,j)\in E_3\}}.
\]

\begin{figure}[t]
\begin{center}
\vskip.2cm
\begin{tikzpicture}
\draw (0,0) rectangle (5,-5);

\draw (1,-1) rectangle (2.3,-2.3);
\draw (3,-3) rectangle (4.3,-4.3);

\draw[fill=white] (0,0) rectangle (1.3,-1.3);
\draw[fill=white] (2,-2) rectangle (3.3,-3.3);
\draw[fill=white] (4,-4) rectangle (5,-5);

\draw (.65,-.65) node {$U$};
\draw (2.65,-2.65) node {$U$};
\draw (4.5,-4.5) node {$U$};

\draw (1.65,-1.65) node {$V$};
\draw (3.65,-3.65) node {$V$};

\draw (1.25,-3.75) node {$W$};
\draw (3.75,-1.25) node {$W$};

\draw (-.05,0) to (-.1,0) node[left] {\scriptsize $1$};
\draw (-.05,-1) to (-.1,-1) node[left] {\scriptsize $N_1$};
\draw (-.05,-1.3) to (-.1,-1.3) node[left] {\scriptsize $M_1$};
\draw (-.05,-2) to (-.1,-2) node[left] {\scriptsize $N_2$};
\draw (-.05,-2.3) to (-.1,-2.3) node[left] {\scriptsize $M_2$};
\draw (-.05,-3) to (-.1,-3) node[left] {\scriptsize $N_3$};
\draw (-.05,-3.3) to (-.1,-3.3) node[left] {\scriptsize $M_3$};
\draw (-.1,-4) node[left] {\scriptsize $\vdots\,$};
\draw (-.05,-5) to (-.1,-5) node[left] {\scriptsize $n=N_{k_0}$};

\draw (0,.05) to (0,.1) node[above] {\scriptsize $1$};
\draw (1,.05) to (1,.1) node[above] {\scriptsize $N_1\ $};
\draw (1.3,.05) to (1.3,.1) node[above] {\scriptsize $\ M_1$};
\draw (2,.05) to (2,.1) node[above] {\scriptsize $N_2\ $};
\draw (2.3,.05) to (2.3,.1) node[above] {\scriptsize $\ M_2$};
\draw (3,.05) to (3,.1) node[above] {\scriptsize $N_3\ $};
\draw (3.3,.05) to (3.3,.1) node[above] {\scriptsize $\ M_3$};
\draw (4.15,.1) node[above] {\scriptsize $\cdots$};
\draw (5,.05) to (5,.1) node[above] {\scriptsize $n$};
\end{tikzpicture}
\vskip.2cm
\end{center}
\caption{\small Illustration of the decomposition into matrices $U$, $V$, and $W$. The figure is drawn for clarity on a  $\log\log$ scale.\label{fig:opdecomp}}
\end{figure}

The matrix $U$ is block diagonal with the first block $U_1=(X_{ij})_{i,j\in E_{1,1}}$ 
of dimension  $|E_{1,1}|=M_1$ and blocks $U_k=(X_{ij})_{i,j\in  E_{1,k}}$ for $k\geq 2$ 
of dimension $| E_{1,k}| \leq M_{2k-1}-~N_{2k-2}$. We have
\[
(\Ex\|U_1\|_{p\to q}^2)^{1/2}\leq (\Ex\|U_1\|_{2\to 2}^2)^{1/2}
 \leq\Bigl(\sum_{i,j\in [1,M_1]}\Ex g_{ij}^2a_{ij}^2\Bigr)^{1/2}\leq M_1\max_{i,j}|a_{ij}|\lesssim D_\infty.
\]

For $k=2,3,\ldots$ the Lipschitz constant of the function 
\[
\er^{mn}\ni z\mapsto \sup_{s\in B_{q^*}^m, t\in B_p^n} \sum_{i,j\in E_{1,k}} a_{ij}z_{ij}s_it_j = \|(a_{ij}z_{ij})_{i,j\in E_1,k}\|_{p\to q}
\]
is equal to
\[
\sup_{s\in B_{q^*}^m, t\in B_p^n} \Bigl(  \sum_{i,j\in E_{1,k}} a_{ij}^2 s_i^2 t_j^2 \Bigr)^{1/2}
\le \sup_{s\in B_{2}^m, t\in B_2^n} \Bigl(  \sum_{i,j\in E_{1,k}} a_{ij}^2  s_i^2 t_j^2 \Bigr)^{1/2}
=\max_{i,j\in E_{1,k} }|a_{ij}|.
\]
Thus, the Gaussian concentration (see, e.g.,  \cite[Theorem~5.6]{BLM2013}),  
the first assumption of the proposition, and property \eqref{eq:estbyDinfty2} imply 
\begin{align*}
\Big(\Ex\|U_k\|_{p\to q}^{2^k}\Big)^{2^{-k}}
&\lesssim \Ex\|U_k\|_{p\to q}+2^{k/2}\max_{i,j\in E_{1,k}}|a_{ij}|
\\
&\le \alpha_1 D_1+\alpha_2 D_2+\Bigl(\alpha_3\sqrt{2\log|E_{1,k}|}+2^{k/2}\Bigr)\max_{i,j\in E_{1,k}}|a_{ij}|
\\ &
\lesssim  \alpha_1 D_1+\alpha_2 D_2+ \alpha_3D_\infty.
\end{align*}
Thus, 
\[
M\coloneqq \sup_{k}\Bigr(\Ex\|U_k\|_{p\to q}^{2^k}\Bigl)^{2^{-k}}\lesssim 
\alpha_1 D_1+\alpha_2 D_2+ \alpha_3D_\infty,
\]
and Lemma~\ref{lem:block-matr} yields
\begin{align*}
\Ex\|U\|_{p\to q}
&=\Ex\sup_{k\geq 1}\|U_k\|_{p\to q}\leq
2M+M\Ex\sum_{k\geq 1}\frac{\|U_k\|_{p\to q}}{M}\ind_{\{\|U_k\|_{p\to q}\geq 2M\}}
\\
&\leq 2M+M\Ex\sum_{k\geq 1}2^{1-2^k}\Bigl(\frac{\|U_k\|_{p\to q}}{M}\Bigr)^{2^k}
\leq M\Bigl(2+\sum_{k\geq 1}2^{1-2^k}\Bigr)\leq 3M
\\
&\lesssim \alpha_1 D_1+\alpha_2 D_2+ \alpha_3D_\infty.
\end{align*} 
In a similar way we show that 
\[
\Ex\|V\|_{p\to q}\lesssim  \alpha_1 D_1+\alpha_2 D_2+ \alpha_3D_\infty.
\]

Finally, fix $(i,j)\in E_3$ and take $k\in \{0,1,\ldots \}$ such that $M_k\le i< M_{ k+1}$, 
where $M_0=1$.
Observe that if $M_k\leq i\le N_{k+1}$, then either $j\leq N_k$ or $j> M_{ k+1}$ and if 
$N_{k+1}+1 \leq i<M_{k+1}$, then either $j\leq N_k$ or $j> M_{k+2}$. 
Therefore, either $j\leq N_k$ or $j> M_{k+1}$. If $j\leq N_k$, then by \eqref{eq:estoffdiag1}
\[
(i+j)^{(8(p^*\vee q))^{-1}}|a_{ij}|\leq 2N_{k+2}^{(8(p^*\vee q))^{-1}}D_22^{-2^{k-1}/q}
\le 2  D_2.
\]
If on the other hand $M_{k+l}< j \le M_{k+l +1}$ for some $l\geq  1$,  then    $i<N_{k+l}$ and \eqref{eq:estoffdiag2} imply
\[
(i+j)^{(8(p^*\vee q))^{-1}}|a_{ij}|\leq 2N_{k+l+2}^{(8(p^*\vee q))^{-1}} D_12^{-2^{k+l-1}/p^*}
\le 2D_1.
\]
Thus,  the second assumption of the proposition yields
\begin{align*}
\Ex \|W\|_{p\to q} &\le \beta_1D_1+\beta_2D_2+\beta_3 \max_{(i,j)\in E_3}(i+j)^{(8(p^*\vee q))^{-1}}|a_{ij}| 
\\
&\le (\beta_1+2\beta_3) D_1+(\beta_2+2\beta_3)D_2.	\qedhere
\end{align*}
\end{proof}

\begin{proof}[Proof of Theorem~\ref{thm:main}]
The two-sided estimate between the expressions on the right-hand side of the 
first and the last line was proven in 
\cite[Section~5.4]{APSS}.
This also yields the lower bound in the first asserted two-sided estimate.

The second two-sided estimate follows from
\begin{equation} 
\label{eq:Emax-vs-max-subsets}
\Ex\max_{i,j}|a_{ij}g_{ij}|
\sim 
\max_{ k\geq 0}\inf_{|I|=k}\sqrt{\Log k}	\max_{(i,j)\notin I} |a_{ij}|
\end{equation}
(cf. \cite[ Lemmas~2.3 and 2.4]{vH}).

The upper bound from the first two-sided estimate follows from
Propositions~\ref{prop:dimdependent}, \ref{prop:weakerD3-allpq} and \ref{prop:weaker-esimates-suffice} and  estimate \eqref{eq:Emax-vs-max-subsets}.

Now we move to the proof of the third two-sided estimate from Theorem~\ref{thm:main}. 
We will show a more precise bound
\begin{equation}
\label{DvsD'}
\sqrt{p^*}D_1+\sqrt{q}D_2+D_\infty\sim \sqrt{p^*}D_1+\sqrt{q}D_2+D_\infty',
\end{equation}
where
\[
D'_\infty\coloneqq \max_{ k\geq 0}\inf_{|I|=|J|=k}\max_{i\notin I,j\notin J} \sqrt{\Log k}\, |a_{ij}|.
\]
The  lower bound in \eqref{DvsD'} is trivial,  since $I\subset P_1(I)\times P_2(I)$, 
where $P_1$ and $P_2$ are coordinate projections.  To establish the upper bound in \eqref{DvsD'} we need to show that for any $k\geq 1$,
\begin{equation}
\label{eq:estKk}
 \sqrt{\Log k}\inf_{|V|=k}\max_{(i,j)\notin V}|a_{ij}|
 \lesssim  \sqrt{p^*}D_1+ \sqrt{q}D_2+D_\infty'.
\end{equation}
If $k\leq 3$, then
\[
\sqrt{\Log k}\inf_{|V|=k}\max_{(i,j)\notin V} |a_{ij}|\lesssim \max_{i,j}|a_{ij}|\leq D_1.
\]
If $k\geq 3$, then put $k'\coloneqq \lfloor \sqrt{k/3}\rfloor$ and choose $|I_0|=|J_0|=k'$ such that
\[
\inf_{|I|=|J|=k'}\max_{i\notin I, j\notin J} |a_{ij}|=\max_{i\notin I_0, j\notin J_0} |a_{ij}|.
\]
Let $V_0\coloneqq (I_0\times J_0)\cup V_1\cup V_2$, where
\begin{align*}
V_1&\coloneqq \{(i,j)\colon i\in I_0, |a_{ij}|\geq (k')^{-1/p*}D_1\},
\\ 
V_2&\coloneqq \{(i,j)\colon j\in J_0, |a_{ij}|\geq (k')^{-1/q}D_2\}.
\end{align*}
Then $|V_0|\leq |I_0||J_0|+|I_0|k'+|J_0|k'=3(k')^2\leq k$, so that
\begin{align*}
\inf_{|V|=k}\max_{(i,j)\notin V} |a_{ij}|
&\leq \max_{(i,j)\notin V_0} |a_{ij}|
\\
&\leq \max_{i\notin I_0,j\notin J_0} |a_{ij}|+\max_{(i,j)\in (I_0\times [n])\setminus V_1}|a_{ij}|
+\max_{(i,j)\in ([m]\times J_0)\setminus V_2}|a_{ij}|
\\
&\leq (\Log k')^{-1/2}D_\infty'+(k')^{-1/p*}D_1+(k')^{-1/q}D_2.
\end{align*}
We have   $\sqrt{\Log k}\lesssim 
\min\{\sqrt{\Log k'}, \sqrt{p^*}(k')^{1/p^*}, \sqrt{q}(k')^{1/q}\}$, 
so \eqref{eq:estKk} follows. 
\end{proof}

In the proof of Corollary~\ref{cor:Weibulls-below4} we will also use the following lemma from \cite{LvHY}.

\begin{lem}[{\cite[Lemma~4.7]{LvHY}}]
	\label{lem:cfconvex}
Let $h_i$ and $h_i'$, $i=1,\ldots,k$ be independent centered random 
variables that satisfy 
\[
	C_1\rho^{\beta} \le\Ex[|h|^\rho]^{1/\rho} \le
	C_2\rho^{\beta}
	\qquad\mbox{for all }\rho\ge 2.
\]
 Then there exists a constant $C$
depending only on $C_1,C_2,\beta$ such that
\[
	\Ex[f(h_1,\ldots,h_k)] \le
	\Ex[f(Ch_1',\ldots,Ch_k')]
\]
for every symmetric convex function $f:\mathbb{R}^k\to\mathbb{R}$.
\end{lem}

\begin{proof}[Proof of Corollary~\ref{cor:Weibulls-below4}]
Assume first that $p^*,q<\infty$.
Let $s>0$ be such that $\frac 1r =\frac 12 +\frac 1s$ (if $r=2$, then $s=\infty$) and let $Y_{ij}=g_{ij}|\widetilde{g}_{ij}|^{2/s}$ where $(\widetilde{g}_{ij})_{i,j}$ is an independent copy of $(g_{ij})_{i,j}$. Then for every $\rho \ge 1$,
\begin{equation}
\label{eq:momXijYij}
(\Ex |X_{ij}|^\rho)^{1/\rho} 
\sim_r \rho^{1/r} 
\sim_r (\Ex |Y_{ij}|^\rho)^{1/\rho}, 
\end{equation}
so  Theorem~\ref{thm:main} and Lemma~\ref{lem:cfconvex}  (applied twice)  imply
\begin{align*}	
\Ex\|(a_{ij}X_{ij})_{i, j}\|_{p\to q} 
& \sim_{p,q,r} 
\Ex \max_i \|(a_{ij}|g_{ij}|^{2/s})_j\|_{p^*} +\Ex\max_j \|(a_{ij}|g_{ij}|^{2/s})_i\|_{q} 
\\ 
&\qquad \quad 
+ \Ex \max_{ k\geq 0}\inf_{|I|=k}\sqrt{\Log k}\max_{(i,j)\notin I} |a_{ij}||g_{ij}|^{2/s}
\\ 
&\sim_{p,q, r} 
\Ex \max_i \|(a_{ij}X_{ij})_j\|_{p^*} + \Ex \max_j \|(a_{ij}X_{ij})_i\|_{q}.
\end{align*}
Moreover,
\begin{align*}
\Ex  \max_i \|(a_{ij}|g_{ij}|^{2/s})_j\|_{p^*}
&= \Ex  \max_i \|(|a_{ij}|^{s/2}g_{ij})_j\|_{2p^*/s}^{2/s}
\\ 
&\sim_r   \Bigl(\Ex  \max_i \|(|a_{ij}|^{s/2}g_{ij})_j\|_{2p^*/s}\Bigr)^{2/s}
\end{align*}
and similarly
\[
\Ex\max_j \|(a_{ij}|g_{ij}|^{2/s})_i\|_{q} 
\sim_r  \Bigl(\Ex  \max_j \|(|a_{ij}|^{s/2}g_{ij})_i\|_{2q/s}\Bigr)^{2/s},
\]
so \cite[equation (5.11)]{APSS} yields
\begin{align*}
& \Ex \max_i \|(a_{ij}|g_{ij}|^{2/s})_j\|_{p^*} +\Ex\max_j \|(a_{ij}|g_{ij}|^{2/s})_i\|_{q} 
\\
& \qquad \sim_{p,q, r}  \max_i \|(|a_{ij}|^{s/2})_j\|_{2p^*/s}^{2/s} 
+ \max_j \|(|a_{ij}|^{s/2})_i\|_{2q/s}^{2/s}
+ \Bigl(\Ex \max_{i,j} |a_{ij}|^{s/2}|g_{ij}| \Bigr)^{2/s}
\\ 
& \qquad\sim_r \max_i \|(a_{ij})_j\|_{p^*} +\max_j \|(a_{ij})_i\|_{q} 
+\Ex \max_{i,j} |a_{ij}||g_{ij}|^{2/s}.
\end{align*}
 Note also that
\begin{align*}
\Ex \max_{i,j} |a_{ij}||g_{ij}|^{2/s}
&\leq
 \Ex \max_{ k\geq 0}\inf_{|I|=k}\sqrt{\Log k}\max_{(i,j)\notin I} |a_{ij}||g_{ij}|^{2/s}
\\
&\sim \Ex \max_{i,j}|a_{ij}||g_{ij}|^{2/s}|\tilde{g}_{i,j}|
 =\Ex\max_{i,j}|a_{ij}Y_{ij}|
 \sim_r \Ex\max_{i,j}|a_{ij}X_{ij}|,
\end{align*}
where the second estimate follows from the conditional application of \eqref{eq:Emax-vs-max-subsets} and the last one 
by \eqref{eq:momXijYij}  and Lemma~\ref{lem:cfconvex}. Thus,
\begin{align*}
\Ex\|(a_{ij}X_{ij})_{i, j}\|_{p\to q} 
& \sim_{p,q,r} 
\max_i \|(a_{ij})_j\|_{p^*} +\max_j \|(a_{ij})_i\|_{q} +\Ex\max_{i,j}|a_{ij}	X_{ij}|.
\end{align*}

Let $Z_{ij}=\ve_{ij}|{g}_{ij}|^{2/r}$, where $\ve_{ij}$, $i\le m$, $j\le n$, are iid symmetric Bernoulli variables, independent of $(g_{ij})_{i,j}$. 
Then 
\[
(\Ex |X_{ij}|^\rho)^{1/\rho} 
\sim_r \rho^{1/r} 
\sim_r (\Ex |Z_{ij}|^\rho)^{1/\rho}, 
\]
so Lemma~\ref{lem:cfconvex} and \eqref{eq:Emax-vs-max-subsets} yield
\begin{align}
\notag
\Ex\max_{i,j}|a_{ij} X_{ij}| & \sim_r \Ex\max_{i,j}|a_{ij} Z_{ij}| 
=\Ex\max_{i,j}|a_{ij}| |g_{ij}|^{2/r} 
\\ \notag
& \sim_r \bigl(\Ex\max_{i,j}|a_{ij}|^{r/2} |g_{ij}| \bigr)^{2/r} 
\sim \bigl(\max_{ k\geq 0}\inf_{|I|=k}\sqrt{\Log k}\max_{(i,j)\notin I} |a_{ij}|^{r/2}\bigr)^{2/r}
\\ \label{eq:proof-cor-Weibull-1}
&=\max_{ k\geq 0}\inf_{|I|=k}{\Log^{1/r} k}\max_{(i,j)\notin I} |a_{ij}|.
\end{align}

Finally, the third two-sided estimate in Corollary \ref{cor:Weibulls-below4} 
may be established in a similar way as
in the Gaussian case (see the last part of the proof of Theorem~\ref{thm:main}).

If $p^*=\infty$ (i.e., $p=1$), then we proceed similarly, using inequality~\eqref{eq:ineq-p=1} instead of Theorem~\ref{thm:main}, to prove that for every $q\in [2,\infty)$, 
\begin{align*}
\Ex\| (a_{ij}X_{ij}  )_{i\le m, j\le n}\|_{1\to q}
&\sim_{q,r} 
\max_j \|(a_{ij})_i\|_{q} +\Ex\max_{i,j}|a_{ij}X_{ij}|
\\ & \sim_{  r} 
\max_j \|(a_{ij})_i\|_{q} 
+\max_{k\ge 0}\inf_{|I|=k}\Log^{1/r} k\max_{(i,j)\notin I} |a_{ij}|
\\
&  \sim_{ q, r} 
\Ex\max_{i,j}|a_{ij}X_{ij}|+\Ex\max_j\|(a_{ij}X_{i,j})_i\|_{q}.
\end{align*}
Moreover, if $p^*=q=\infty$, then \eqref{eq:proof-cor-Weibull-1} yields that
\[
\Ex\| (a_{ij}X_{ij}  )_{i\le m, j\le n}\|_{1\to \infty} = \Ex \max_{i,j} |a_{i,j}X_{i,j}|
\sim_r \max_{k\ge 0}\inf_{|I|=k}\Log^{1/r} k\max_{(i,j)\notin I} |a_{ij}|.
\]

The last case, $q=\infty>p^*\ge 2$, follows by duality.
\end{proof}

\begin{proof}[Proof of Corollary~\ref{cor:generalentries}]
Let $(\ve_{ij})_{i,j}$ and $(g_{ij})_{i,j}$ be independent random matrices with 
independent symmetric $\pm 1$
and $\mathcal{N}(0,1)$ entries, respectively, and assume that they are independent of 
$(X_{ij})_{i,j}$. Let  $Y_{ij}=g_{ij}X_{ij}$  and 
$Z_{ij}^{(\rho)}=|Y_{ij}|^{\rho}-\Ex|Y_{ij}|^{\rho}$ for $\rho\in \{p^*,q\}$. 
Since $X_{ij}$ are centered,
\begin{align*}
\Ex\|(X_{ij})\|_{p\to q}
& \le 2 \Ex\|(\ve_{ij}X_{ij})\|_{p\to q}
\le \frac{2}{\Ex|g_{11}|} \Ex\|(Y_{ij})\|_{p\to q}
\\
&\sim_{p,q} \Ex\max_{i}\|(Y_{ij})_{j}\|_{p^*}+\Ex\max_{j}\|(Y_{ij})_{i}\|_{q},
\end{align*}
 where the last bound follows from Corollary \ref{cor:mixtures}.

Since  for every $a,b\ge 0$, $(a+b)^{1/q}\le a^{1/q}+b^{1/q}$, we have
\[
\Ex\max_{j}\|(Y_{ij})_{i}\|_{q}\leq \max_{j}\Bigl(\sum_{i}\Ex |Y_{ij}|^{q}\Bigr)^{1/q}
+\Ex\max_{j} \Bigl|\sum_{i}(|Y_{ij}|^{q}-\Ex|Y_{ij}|^q) \Bigr|^{1/q}.
\]
By the independence of $g_{ij}$ and $X_{ij}$ we have
\begin{align*}
 \max_{j}\Bigl(\sum_{i}\Ex |Y_{ij}|^{q}\Bigr)^{1/q}
& =(\Ex|g_{11}|^q)^{1/q}\max_{j}\Bigl(\sum_{i}\Ex |X_{ij}|^{q}\Bigr)^{1/q}
\\ & \le  \sqrt{q} \max_{j}\Bigl(\sum_{i}\Ex |X_{ij}|^{q}\Bigr)^{1/q}.
\end{align*}
Moreover,
by the Rosenthal inequality  \cite[Theorem 1.5.11]{dPG} we have for every $r\geq 2$,
\begin{align*}
\Ex\max_{j} \Bigl|\sum_{i}Z_{ij}^{(q)} \Bigr|^{1/q}
&\leq \Ex\Bigl(\sum_{j}\Bigl|\sum_{i}Z_{ij}^{(q)}\Bigr|^r\Bigr)^{1/(rq)}
\leq \Bigl(\sum_{j}\Ex\Bigl|\sum_{i}Z_{ij}^{(q)}\Bigr|^r\Bigr)^{1/(rq)}
\\
&\lesssim 
\Bigl( \frac{r}{\Log r}\Bigr)^{1/q} \Bigl(\sum_{j}\Bigl(\sum_{i}\Ex\bigl|Z_{ij}^{(q)}\bigr|^2\Bigl)^{r/2}
+\sum_{i,j}\Ex\bigl|Z_{ij}^{(q)}\bigr|^r\Bigr)^{1/(rq)}.
\end{align*}

Observe that for $u\geq 1$ we have
\[
\Ex|Z_{ij}^{(q)}|^u\leq 2^u\Ex|Y_{ij}|^{qu}=2^u\Ex|g_{ij}|^{qu}\Ex|X_{ij}|^{qu}
\leq 2^u(qu)^{(qu)/2}\Ex|X_{ij}|^{qu}.
\]
Therefore for any $r\geq 2$,
\begin{align*}
\Ex\max_{j}\|(Y_{ij})_{i}\|_{q}
&\lesssim_{q,r}
\max_{j}\Bigl(\sum_{i}\Ex |X_{ij}|^{q}\Bigr)^{1/q}
\\
&\quad\quad+\Bigl(\sum_{j}\Bigl(\sum_{i}\Ex|X_{ij}|^{2q}\Bigl)^{r/2}
+\sum_{i,j}\Ex|X_{ij}|^{rq}\Bigr)^{1/(rq)}.
\end{align*}

In a similar way we show that 
 for every $s\geq 2$,
\begin{align*}
\Ex\max_{i}\|(Y_{ij})_{j}\|_{p^*}
\lesssim_{p,s} &
\max_{i}\Bigl(\sum_{j}\Ex |X_{ij}|^{p^*}\Bigr)^{1/p^*}
\\
&\quad+\Bigl(\sum_{i}\Bigl(\sum_{j}\Ex|X_{ij}|^{2p^*}\Bigl)^{s/2}
+\sum_{i,j}\Ex|X_{ij}|^{sp^*}\Bigr)^{1/(sp^*)}.
\end{align*}
Estimate \eqref{eq:genentr2} follows if we take $r=2(p^*\vee q)/q$ and $s=2(p^*\vee q)/p^*$.
\end{proof}

%%%%%%%%%%%%%%%%%%%%%%%%
%%%%%%%%%%%%%%%%%%%%
%%%%%%%%%%%%%%%%%%%
\section{Dimension dependent bounds}
\label{sect:dim-dep}

In this section our aim is to  obtain the dimension dependent bound from 
Proposition~\ref{prop:dimdependent} in the case $(p^*,q)\notin[2,3]^2$.
We begin by proving a weaker estimate, asserted in Proposition~\ref{prop:worseD3'-intro}.
Then, in Subsections~\ref{subsect:logexp-reduction} and \ref{sect:degdepbounds}, 
we  show how to reduce the exponent of the logarithm appearing in this proposition 
in the case $p^*\vee q>2$. 
The proof of Proposition~\ref{prop:dimdependent} in the case $(p^*,q)\notin[2,3]^2$ 
is provided at the end of this section.

The methods provided in this section work in the whole range $p^*\vee q>2$, but they yield the constant which blows up when $p^*\vee q $ approaches 2.
In order to obtain the bound with a universal constant in the case when $p^*$ and $q$ are close to 2, we need  more involved combinatorial arguments and some additional estimates obtained by the Slepian-Fernique lemma to run the dimension reduction. 
We provide these additional tools in Section~\ref{sect:appendix}, together with the proofs of Propositions~\ref{prop:dimdependent} and \ref{prop:worseD3'-intro} in the case $p^*,q\in[2,3]$.

\subsection{Proof of Propostion~\ref{prop:worseD3'-intro} in the case \texorpdfstring{$(p^*,q)\notin [2,3]^2$}{p*,q not in [2,3]x[2,3]}.} 
\label{subsect:proof-worseD3'}

In order to prove Proposition~\ref{prop:worseD3'-intro} we split each 
$s\in B_{q^*}^m$ into two parts: one consisting of vectors with coordinates 
which do not exceed a certain level, and the second one consisting of vectors with 
non-vanishing coordinates having absolute value exceeding the same level.
The next proposition shows how  to control the suprema over points with large non-zero coordinates.

\begin{prop}
\label{prop:largecoordinates}
If $\gamma\ge 1$ and $p^*,q\ge 2$, then
\begin{align}
\Ex  \sup_{s\in B_{q*}^m}  
\sup_{t\in B_p^n}\sum_{i,j}a_{ij}g_{ij}s_it_j\ind_{\{|s_i|\ge \gamma^{-1}\}}
\label{eq:largecoordinates}
\lesssim  
\sqrt{p^*}D_1 +  \sqrt{\gamma^{q^*} \Log m} \max_{i,j}|a_{ij}| 
\end{align}
and 
\begin{align*}
\Ex\sup_{s\in B_{q*}^m}  \sup_{t\in B_p^n} 
\sum_{i,j}a_{ij}g_{ij}s_it_j \ind_{\{|t_j|\ge \gamma^{-1}\}}
\lesssim  
\sqrt{q}D_2 +  \sqrt{\gamma^{p} \Log n} \max_{i,j}|a_{ij}|.
\end{align*}
\end{prop}

In the proof of Proposition~\ref{prop:largecoordinates} we shall  use the following standard lemma; 
we formulate and prove it for the sake of completeness.

\begin{lem}
For every fixed $s\in B_{q^*}^m$, $q\in [2,\infty]$, and $p\in (1,2]$,
\begin{equation}
\label{eq:fixeds}
\Ex \sup_{t\in B_p^n}\sum_{i,j}a_{ij}g_{ij}s_it_j
=\Ex\Bigl\|\Bigl(\sum_{i}a_{ij}g_{ij}s_i\Bigr)_j\Bigr\|_{p^*}\leq \sqrt{p^*} D_1.
\end{equation}
Similarly, for every fixed $t\in B_{p}^n$, $p\in [1,2]$,  and $q\in [2,\infty)$, 
\begin{equation}
\label{eq:fixedt}
\Ex \sup_{s\in B_{q^*}^m}\sum_{i,j}a_{ij}g_{ij}s_it_j
=\Bigl\|\Bigl(\sum_{j}a_{ij}g_{ij}t_j\Bigr)_i\Bigr\|_{q}\leq \sqrt{q}D_2.
\end{equation}
\end{lem}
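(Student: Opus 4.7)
The plan is to handle the two equalities and the two inequalities separately. Each equality is pure $\ell_p$--$\ell_{p^*}$ duality: for fixed $s$ the sum $\sum_{i,j} a_{ij}g_{ij}s_i t_j$ is linear in $t\in B_p^n$, so its supremum equals the $\ell_{p^*}$-norm of the vector $(\sum_i a_{ij}g_{ij}s_i)_j$, and symmetrically for fixed $t$. So I would dispose of both equalities in one line and focus on the two estimates.

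For \eqref{eq:fixeds}, I would bound the expected $\ell_{p^*}$-norm by the $L^{p^*}$-norm via Jensen/Lyapunov. Writing $Y_j\coloneqq\sum_i a_{ij}g_{ij}s_i$, the variable $Y_j$ is a centered Gaussian with variance $\sigma_j^2=\sum_i a_{ij}^2 s_i^2$, so
\[
\Ex\|(Y_j)_j\|_{p^*}\le\Bigl(\sum_j \Ex|Y_j|^{p^*}\Bigr)^{1/p^*}
=(\Ex|g|^{p^*})^{1/p^*}\Bigl(\sum_j\sigma_j^{p^*}\Bigr)^{1/p^*},
\]
and the Gaussian moment satisfies $(\Ex|g|^{p^*})^{1/p^*}\lesssim\sqrt{p^*}$. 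The whole question therefore reduces to showing
\[
\Bigl(\sum_j\Bigl(\sum_i a_{ij}^2 s_i^2\Bigr)^{p^*/2}\Bigr)^{1/p^*}\le D_1.
\]
Since $p^*/2\ge 1$, Minkowski's inequality applied to the vectors $(a_{ij}^2)_j$ summed over $i$ with nonnegative weights $s_i^2$ yields
\[
\Bigl(\sum_j\Bigl(\sum_i a_{ij}^2 s_i^2\Bigr)^{p^*/2}\Bigr)^{2/p^*}
\le \sum_i s_i^2 \|(a_{ij})_j\|_{p^*}^{2}
\le D_1^{2}\sum_i s_i^2.
\]
The step I view as the only delicate point is checking that $\sum_i s_i^2\le 1$; this holds because $q^*\le 2$ and $s\in B_{q^*}^m$ forces $|s_i|\le 1$, hence $s_i^2\le |s_i|^{q^*}$, so $\sum_i s_i^2\le\|s\|_{q^*}^{q^*}\le 1$.

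The proof of \eqref{eq:fixedt} is completely symmetric, exchanging the roles of $p^*,q$ and of $s,t$: for fixed $t\in B_p^n$ the variable $Z_i\coloneqq\sum_j a_{ij}g_{ij}t_j$ is Gaussian with variance $\sum_j a_{ij}^2 t_j^2$, and
\[
\Ex\|(Z_i)_i\|_q\le (\Ex|g|^q)^{1/q}\Bigl(\sum_i\Bigl(\sum_j a_{ij}^2 t_j^2\Bigr)^{q/2}\Bigr)^{1/q}\lesssim \sqrt{q}\,D_2,
\]
where the last bound again uses Minkowski (valid since $q/2\ge 1$) together with the observation $\sum_j t_j^2\le\|t\|_p^p\le 1$, which uses $p\le 2$ and $\|t\|_\infty\le 1$. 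No delicate step remains; the only minor subtlety across the two cases is simply tracking that the duality $q\in[2,\infty]\Leftrightarrow q^*\in[1,2]$ and $p\in[1,2]\Leftrightarrow p^*\in[2,\infty]$ is what makes both Minkowski applications and both $\ell_r\subset\ell_2$ embeddings go through.
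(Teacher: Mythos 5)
Your proposal is correct and follows essentially the same route as the paper: Jensen to pull the expectation inside the $p^*$-th power, the Gaussian moment bound $\|g\|_{p^*}\le\sqrt{p^*}$, the triangle inequality in $\ell_{p^*/2}$, and the embedding $\|s\|_2\le\|s\|_{q^*}\le 1$. The only cosmetic difference is that you write $(\Ex|g|^{p^*})^{1/p^*}\lesssim\sqrt{p^*}$ where the sharp inequality $\le\sqrt{p^*}$ holds and is what the stated constant requires.
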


\begin{proof}
We have
\begin{align*}
\Ex\sup_{t\in B_p^n}\sum_{i,j}a_{ij}g_{ij}s_it_j
&=\Ex\Bigl(\sum_{j}\Bigl(\sum_{i}a_{ij}g_{ij}s_i\Bigr)^{p^*}\Bigr)^{1/p^*}
\leq \Bigl(\Ex\sum_{j}\Bigl(\sum_{i}a_{ij}g_{ij}s_i\Bigr)^{p^*}\Bigr)^{1/p^*}
\\
&=\|g_{1,1}\|_{p^*}\Bigl(\sum_{j}\Bigl(\sum_i a_{ij}^2s_i^2\Bigr)^{p^*/2}\Bigr)^{1/p^*}.
\end{align*}
Moreover, $\|g_{11}\|_{p^*}\leq \sqrt{p^*}$ and, since $\|s\|_2\leq \|s\|_{q^*}\leq 1$,
\begin{align*}
\Bigl(\sum_{j}\Bigl(\sum_i a_{ij}^2s_i^2\Bigr)^{p^*/2}\Bigr)^{2/p^*}
&=\Bigl\|\Bigl(\sum_{i}s_i^2a_{ij}^2\Bigr)_j\Bigr\|_{p^*/2}
\leq \sum_{i}s_i^2\|(a_{ij}^2)_j\|_{p^*/2}
\\
&\leq \max_{i}\|(a_{ij}^2)_j\|_{p^*/2}=\max_{i}\|(a_{ij})_j\|_{p^*}^{ 2} =D_1^2.
\end{align*}
Hence, \eqref{eq:fixeds} follows. Estimate \eqref{eq:fixedt} holds by an analogous argument. 
\end{proof}

We shall also use the  following lemma, which follows from the Gaussian concentration  
(see, e.g., \cite[Theorem~5.6]{BLM2013})  and \cite[Lemma~2.3]{vH}, applied with 
$X_i\coloneqq \sup_{t\in T_i} X_t - \Ex \sup_{t\in T_i} X_t$ and 
$\sigma_i\coloneqq\sup_{t\in T_i} (\Var X_t)^{1/2}$.
\begin{lem}
\label{lem:supgauss}
Let $X=(X_t)_{t\in T}$ be a Gaussian process and let $T_1,\ldots,T_k$ be nonempty subsets of 
$T$ such that $T=\bigcup_{i=1}^kT_i$. Then
\[
\Ex \sup_{t\in T}X_t \leq  \max_{i}\Ex\sup_{t\in T_i}X_{t}
+ C\max_i \sqrt{\Log i}\, \sup_{t\in T_i}(\Var X_t)^{1/2}.
\]
In particular, if $X$ is centered, then
\begin{equation}	
\label{eq:supgauss}
\Ex \sup_{t\in T}X_t 
\leq  \max_{i}\Ex\sup_{t\in T_i}X_{t}+ C \sqrt{\Log k}\, \sup_{t\in T}(\Ex X_t^2)^{1/2}.
\end{equation}
\end{lem}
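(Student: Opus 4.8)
The plan is to prove Lemma~\ref{lem:supgauss} by combining two ingredients: the union structure of the index set, and Gaussian concentration for the supremum of a Gaussian process over each piece. First I would fix, for each $i\le k$, the centered random variable $X_i':=\sup_{t\in T_i}X_t-\Ex\sup_{t\in T_i}X_t$ and the scalar $\sigma_i:=\sup_{t\in T_i}(\Var X_t)^{1/2}$. Since $T=\bigcup_{i=1}^k T_i$, we have $\sup_{t\in T}X_t=\max_{i\le k}\bigl(\Ex\sup_{t\in T_i}X_t+X_i'\bigr)\le \max_{i\le k}\Ex\sup_{t\in T_i}X_t+\max_{i\le k}X_i'$, whence
\[
\Ex\sup_{t\in T}X_t\le \max_{i\le k}\Ex\sup_{t\in T_i}X_t+\Ex\max_{i\le k}X_i'.
\]
So the task reduces to bounding $\Ex\max_{i\le k}X_i'$.

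The key point is that by Gaussian concentration (the version cited, e.g.\ \cite[Theorem~5.6]{BLM2013}, applied to the $1$-Lipschitz functional realizing $\sup_{t\in T_i}X_t$ of the underlying standard Gaussian vector), each $X_i'$ is centered and sub-Gaussian with variance proxy $\sigma_i^2$: more precisely $\Pr(|X_i'|\ge u)\le 2e^{-u^2/(2\sigma_i^2)}$ for all $u>0$. This is exactly the hypothesis of \cite[Lemma~2.3]{vH}, which bounds the expected maximum of a (not necessarily independent) family of such variables by $C\max_i\sqrt{\Log i}\,\sigma_i$ after possibly reordering so that $\sigma_1\ge\sigma_2\ge\cdots$; but since the bound $\max_i\sqrt{\Log i}\,\sigma_i$ is invariant under the reordering that sorts the $\sigma_i$'s (the $i$-th largest $\sigma$ gets paired with $\sqrt{\Log i}$, which is precisely what the $\max$ over all pairings computes), we may simply invoke it to get $\Ex\max_{i\le k}X_i'\le C\max_{i\le k}\sqrt{\Log i}\,\sigma_i$. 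Substituting back yields the first displayed inequality of the lemma.

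For the ``in particular'' part, assume $X$ is centered. Then trivially $\sigma_i\le\sup_{t\in T}(\Ex X_t^2)^{1/2}=:\sigma$ for every $i$, and $\sqrt{\Log i}\le\sqrt{\Log k}$ for $i\le k$, so $\max_{i\le k}\sqrt{\Log i}\,\sigma_i\le\sqrt{\Log k}\,\sigma$; plugging this into the general bound gives \eqref{eq:supgauss}.

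I do not expect any genuine obstacle here: the statement is essentially a packaging of standard facts, and the only mild subtlety is making sure the reordering in \cite[Lemma~2.3]{vH} does not cost anything, which it does not because the quantity $\max_i\sqrt{\Log i}\,\sigma_i$ already optimizes over all assignments of labels to the sets. If one wanted to avoid citing \cite[Lemma~2.3]{vH} one could instead argue directly: writing $M:=\max_{i\le k}\sqrt{\Log i}\,\sigma_i$ (after sorting $\sigma_1\ge\cdots\ge\sigma_k$, so $\sigma_i\le M/\sqrt{\Log i}$), a union bound gives $\Pr(\max_i X_i'\ge t)\le\sum_{i\le k}2e^{-t^2/(2\sigma_i^2)}\le\sum_{i\le k}2i^{-t^2/(2M^2)}$ for $t\ge M$, which integrates to $\Ex\max_i X_i'\lesssim M$; but citing \cite[Lemma~2.3]{vH} is cleaner and matches the paper's style.
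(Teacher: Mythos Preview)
Your proof is correct and matches the paper's own argument exactly: the paper simply states that the lemma follows from Gaussian concentration \cite[Theorem~5.6]{BLM2013} and \cite[Lemma~2.3]{vH}, applied with precisely your $X_i'$ and $\sigma_i$. The only minor imprecision is that $\max_i\sqrt{\Log i}\,\sigma_i$ is not literally invariant under reordering --- rather, sorting the $\sigma_i$'s in decreasing order \emph{minimizes} this quantity, so the bound from \cite[Lemma~2.3]{vH} (which assumes decreasing order) is automatically $\le$ the stated bound for any labeling.
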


\begin{proof}[Proof of Proposition~\ref{prop:largecoordinates}]
By the symmetry it is enough to show \eqref{eq:largecoordinates}. 
For a fixed nonempty set $I$ let
\[
X_I\coloneqq \|(a_{ij}g_{ij})_{i\in I,j\leq n}\|_{p\to q}.
\]
Let $S$ be a $1/2$-net in $B_{q^*}^I$ (with respect to the $\ell_{q^*}$ norm) of cardinality 
at most $5^{|I|}$. Then 
\[
X_I\leq 2\sup_{s\in S}\sup_{t\in B_p^n}\sum_{i,j}a_{ij}g_{ij}s_it_j.
\]
Thus, inequality \eqref{eq:supgauss}  yields
\[
\Ex X_I
\lesssim \sup_{s\in S}\Ex\sup_{t\in B_p^n}\sum_{i,j}a_{ij}g_{ij}s_it_j
+
\sqrt{\log |S|}\, \sup_{s\in S}\sup_{t\in B_p^n}\Bigl(\sum_{i,j}a_{ij}^2s_i^2t_j^2\Bigr)^{1/2}.
\]
Since $S\subset B_{q^*}^m\subset B_2^m$ and $B_p^n\subset B_2^n$ we have
\begin{equation} 
\label{eq:lipconst}
\sup_{s\in S}\sup_{t\in B_p^n}\Bigl(\sum_{i,j}a_{ij}^2s_i^2t_j^2\Bigr)^{1/2}
\leq \max_{i,j}|a_{ij}|.
\end{equation}

Estimate \eqref{eq:fixeds}  implies that for any fixed $s\in S$,
\[
\Ex\sup_{t\in B_p^n}\sum_{i,j}a_{ij}g_{ij}s_it_j
\leq \sqrt{p^*}D_1,
\]
hence,
\begin{equation}
\label{eq:supoverI}
\Ex X_I\lesssim \sqrt{p^*}D_1+\sqrt{|I|}\max_{i,j}|a_{ij}|.
\end{equation}

 There are at most $Cm^{\gamma^{q^*}}$ subsets of $[m]$ of cardinality at most $\gamma^{q^*}$. 
Thus, estimates \eqref{eq:supgauss}, \eqref{eq:lipconst}, and \eqref{eq:supoverI}  
yield
\begin{align*}
\Ex  \max_{ |I|\leq \gamma^{q^*}}X_I
&\lesssim \max_{|I|\leq \gamma^{q^*}}\Ex X_I
 +\sqrt{\gamma^{q^*} \Log m}\sup_{s\in B_{q*}^m,t\in B_p^n}\Bigl(\sum_{i,j}a_{ij}^2s^2_it_j^2\Bigr)^{1/2}
\\
&\lesssim \sqrt{p^*}D_1
+ \sqrt{\gamma^{q^*} \Log m} \max_{i,j}|a_{ij}|.
 \end{align*}
To  derive \eqref{eq:largecoordinates} it suffices  to  note that for $s\in B_{q^*}^m$, 
$|\{i\colon |s_i|\geq \gamma^{-1}\}|\leq \gamma^{q^*}$,
so
 \[
 \Ex  \sup_{s\in B_{q*}^m}  
\sup_{t\in B_p^n}\sum_{i,j}a_{ij}g_{ij}s_it_j\ind_{\{|s_i|\ge \gamma^{-1}\}}
\le 
\Ex  \sup_{s\in B_{q*}^m, |\supp(s)|\leq  \gamma^{q^*}}\ \sup_{t\in B_p^n}\sum_{i,j}a_{ij}g_{ij}s_it_j.
\qedhere
 \]
\end{proof}
	
The next proposition shows how to control the suprema over points with small coordinates. It is a crucial step in the proof of Proposition~\ref{prop:worseD3'-intro}.

\begin{prop}
\label{prop:bdd-coordinates-q}
If $q>2$, $ p^*\ge 2$, and $a\in (0,1]$, then
\begin{equation}
\label{eq:toshow2}
\Ex\sup_{s\in B_{q^*}^m\cap aB_\infty^m}\sup_{t\in B_p^n}
\sum_{i\leq m,j\leq n}a_{ij}g_{ij}s_it_j
\lesssim \sqrt{q}D_2+a^{(2-q^*)/2}D_1\Log^{3/2}n .
\end{equation}
If $p^*>2$, $q\ge 2$, and $b\in (0,1]$, then
\begin{equation}
\label{eq:toshow2-dual}
\Ex\sup_{s\in B_{q^*}^m}\sup_{t\in B_p^n\cap bB_\infty^n}
\sum_{i\leq m,j\leq n}a_{ij}g_{ij}s_it_j
\lesssim \sqrt{p^*}D_1+b^{ (2-p)/2}D_2\Log^{3/2}m .
\end{equation}

\end{prop}

To prove Proposition~\ref{prop:bdd-coordinates-q} we need the following two lemmas. For a given $\lambda\in (0,1]$, let us first  define the following interpolation norm on $\er^m$,
 \[
\|x\|_{2,q,\lambda}\coloneqq \inf\bigl\{\bigl(\lambda^2\|y\|_2^2+\|z\|_q^2\bigr)^{1/2}\colon\ x=y+z\bigr\}.
\]
Observe that for $q\ge 2$, $B_{q^*}^m\cap aB_\infty^m\subset a^{(2-q^*)/2}B_2^m$, thus
\begin{equation}
\label{eq:redtointnorms}
\sup_{s\in B_{q^*}^m\cap aB_\infty^m}\sum_{i\leq m}s_ix_i\leq \sqrt{2}\|x\|_{2,q,a^{(2-q^*)/2}}.
\end{equation}

The following lemma presents a crucial property of this norm. This will allow us to perform the induction step in the proof of Proposition~\ref{prop:bdd-coordinates-q}.
We will use its second assertion later, in Section~\ref{sect:decomposition}.

\begin{lem}
\label{lem:interpol}
For every $q\ge  2$, $b,c\in \er^m$, $d\ge 0$, and $\lambda\in (0,1]$,
\begin{equation}
\label{eq:intnormprop}
\Ex\bigl(d+\|(b_i+c_ig_i)_{i\leq m}\|_{2,q,\lambda}^2\bigr)^{1/2}
\leq
\bigl(d+\|b\|_{2,q,\lambda}^2+q\|c\|_q^2\bigr)^{1/2}
\end{equation}
and
\begin{equation}
\label{eq:intnormprop-q-norm}
\Ex\Bigl( d+ \|(b_i+c_ig_i)_{i\leq m}\|_q^2\Bigr)^{1/2}
\leq 
\bigl(d+\|b\|_q^2+q\|c\|_q^2\bigr)^{1/2}.
\end{equation}
\end{lem}

\begin{proof}
Let $b',b''\in \er^m$. 
Then  Jensen's inequality implies
\begin{multline*}
\Ex\bigl(d+\lambda^2 \|b'\|_2^2+\|(b_i''+c_ig_i)_{i\leq m}\|_{q}^2\bigr)^{1/2}
\\ \leq 
\Bigl(d+\lambda^2 \|b'\|_2^2+\Bigl(\sum_{i\leq m}\Ex|b_i''+c_ig_i|^{q}\Bigr)^{2/q}\Bigr)^{1/2}.
\end{multline*}
The hypercontractivity of Gaussian variables and the triangle inequality in $L_{q/2}$ yield
\begin{align*}
\Bigl(\sum_{i\leq m}\Ex|b_i''+c_ig_i|^q\Bigr)^{2/q}
&\leq \Bigl(\sum_{i\leq m}(\Ex|b_i''+\sqrt{q}c_ig_i|^2)^{q/2}\Bigr)^{2/q}
= \|(b_i''^2+qc_i^2)_i\|_{q/2}
\\
&\leq \|(b_i''^2)_i\|_{q/2}+q\|(c_i^2)_i\|_{q/2}=\|b''\|_{q}^2+q\|c\|_q^2.
\end{align*}
Thus,
\begin{align*}
\Ex\bigl(d+\lambda^2 \|b'\|_2^2+\|(b_i''+c_ig_i)_{i\leq m}\|_{q}^2\bigr)^{1/2}
&\leq 
\bigl(d+\lambda^2 \|b'\|_2^2+\|b''\|_{q}^2+q\|c\|_q^2\bigr)^{1/2}.
\end{align*}

Inequality~\eqref{eq:intnormprop} follows by taking $b'$ and $b''$ such that $b=b'+b''$ and
\[
\|b\|_{2,q,\lambda}^2=\lambda^2 \|b'\|_2^2+\|b''\|_q^2,
\]
and observing that
\[
\|(b_i+c_ig_i)_{i\leq m}\|_{2,q,\lambda}^2
\leq \lambda^2 \|b'\|_2^2+\|(b_i''+c_ig_i)_{i\leq m}\|_{q}^2,
\]
while inequality~\eqref{eq:intnormprop-q-norm} follows by taking $b'=0$ and $b''=b$. 
\end{proof}

For a given $p\ge 1$ and $k=0,1,\ldots$ let
\[
T_{\leq k}=\bigl\{t\in \er^n\colon\ \|t\|_p\leq 1, |t_j|\in \{0,1,1/2,\ldots,1/2^k\},\ 
1\leq j\leq n\bigr\}.
\]
Choose $k_0=k_0(n)\coloneqq  \lceil \log_2 n\rceil+ 2\sim \Log n$.
It is standard to see that for any norm $\|\cdot\|$ on $\er^n$
\begin{equation}
\label{eq:redtoT}
\sup_{\|t\|_{p}\leq 1}\|t\|\leq 4\max_{t\in T_{\leq k_0}}\|t\|.
\end{equation}
 Indeed, we decompose $t\in B_p^n$ as $t=t'+t''$, where
\[
t'_i=
\begin{cases}
2^{-k}\mathrm{sgn}(t_i)& \mbox{if }  2^{-k}\leq |t_i|<2^{-k+1} \mbox{ for }k=0,1,\ldots,k_0,
\\
0&\mbox{if }|t_i|<2^{-k_0}.
\end{cases}
\]
and $t''=t-t'$. Then $t'\in T_{\leq k_0}$. Moreover, $\|t''\|_p\leq 3/4$, since 
$|t''_i|\leq \frac{1}{2}|t_i|$ if $|t_i|\geq 2^{-k_0}$ and 
$\sum_{i\colon |t_i|<2^{-k_0}}|t_i|^p\leq 2^{-k_0p}n\leq 2^{- 2p}$.
Hence,
\[
\sup_{\|t\|_{p}\leq 1}\|t\|\leq \max_{t'\in T_{\leq k_0}}\|t'\|+\sup_{\|t''\|_{p}\leq 3/4}\|t''\|_p
= \max_{t\in T_{\leq k_0}}\|t\|+\frac{3}{4}\sup_{\|t\|_{p}\leq 1}\|t\|_p.
\]

The next lemma provides the crucial bound we need in the proof of Proposition~\ref{prop:bdd-coordinates-q}.

\begin{lem}
\label{lem:supoverTk}
For any  $p\leq 2$ and $k=0,1,\ldots$ we have
\[
\Ex\max_{t\in T_{\leq k}}
\Bigl\|\Bigl(\sum_{j\leq n}t_ja_{ij}g_{ij}\Bigr)_{i\leq m}\Bigr\|_{2,q,\lambda}
\leq \sqrt{q}D_2+C\lambda D_1\sum_{l=0}^k2^{-lp/2}\sqrt{\Log|T_{\leq l}|}.
\]
\end{lem}

\begin{proof}
We will establish by induction on $k$ the following stronger bound:
\begin{align}
\label{eq:strongbound}
\Ex\max_{t\in T_{\leq k}}
\Bigl(q(1-\|t\|_2^2)D_2^2
&+\Bigl\|\Bigl(\sum_{j\leq n}t_ja_{ij}g_{ij}\Bigr)_{i\leq m}\Bigr\|_{2,q,\lambda}^2\Bigr)^{1/2}
\\
\notag
&\leq \sqrt{q}D_2+C\lambda D_1 \sum_{l=0}^k 2^{-lp/2}\sqrt{\Log|T_{\leq l}|}.
\end{align}

We will first show the induction step, i.e., that for $k=1,2,\ldots$,
\begin{align}
\label{eq:indstep}
\Ex\max_{t\in T_{\leq k}}
&\Bigl(q(1-\|t\|_2^2)D_2^2
+\Bigl\|\Bigl(\sum_{j\leq n}t_ja_{ij}g_{ij}\Bigr)_{i\leq m}\Bigr\|_{2,q,\lambda}^2\Bigr)^{1/2}
\\
\notag
&\leq \Ex\max_{t\in T_{\leq k-1}}
\Bigl(q(1-\|t\|_2^2)D_2^2
+\Bigl\|\Bigl(\sum_{j\leq n}t_ja_{ij}g_{ij}\Bigr)_{i\leq m}\Bigr\|_{2,q,\lambda}^2\Bigr)^{1/2}
\\
\notag
&\qquad +C\lambda D_12^{-kp/2}\sqrt{\Log|T_{\leq k}|}.
\end{align}

For $t\in T_{\leq k}$ define
\begin{align*}
J_k(t) & \coloneqq\{j\leq n\colon |t_j|=2^{-k}\},
\quad \pi_k(t)\coloneqq(t_j\ind_{\{j\in J_k\}})_j,
\\
J_{<k}(t)&\coloneqq\{j\leq n\colon |t_j|>2^{-k}\},\quad \pi_{<k}(t)\coloneqq(t_j\ind_{\{j\in J_{<k}\}})_j,
\\
X_i(k,t)&\coloneqq\sum_{j\in J_k(t)}t_ja_{ij}g_{ij},\quad Y_i(k,t)\coloneqq\sum_{j\in J_{<k}(t)}t_ja_{ij}g_{ij},
\\
Z_k(t)&\coloneqq\Bigl(q(1-\|t\|_2^2)D_2^2+\bigl\|\bigl(X_i(k,t)+Y_i(k,t)\bigr)_{i\leq m}\bigr\|_{2,q,\lambda}^2\Bigr)^{1/2},
\\
Z_{<k}(t)&\coloneqq\Bigl(q(1-\|t\|_2^2+\|\pi_{k}(t)\|_2^2)D_2^2
+\bigl\|\bigl(Y_i(k,t)\bigr)_{i\leq m}\bigr\|_{2,q,\lambda}^2\Bigr)^{1/2}.
\end{align*}

Observe that
\begin{align}
\label{eq:suprepr1}
\max_{t\in T_{\leq k}}
&\Bigl(q(1-\|t\|_2^2)D_2^2
+\Bigl\|\Bigl(\sum_{j\leq n}t_ja_{ij}g_{ij}\Bigr)_{i\leq m}\Bigr\|_{2,q,\lambda}^2\Bigr)^{1/2}
=\max_{t\in T_{\leq k}}Z_k(t)
\end{align}
and
\begin{align}
\label{eq:suprepr2}
\max_{t\in T_{\leq k-1}}
&\Bigl(q(1-\|t\|_2^2)D_2^2
+\Bigl\|\Bigl(\sum_{j\leq n}t_ja_{ij}g_{ij}\Bigr)_{i\leq m}\Bigr\|_{2,q,\lambda}^2\Bigr)^{1/2}
=\max_{t\in T_{\leq k}}Z_{<k}(t).
\end{align}

Let $\Ex_{k,t}$ denote the integration with respect to variables 
$(g_{i,j})_{i\leq m,j\in J_k(t)}$.
Conditional application of \eqref{eq:intnormprop} yields for any $t\in T_{\leq k}$,
\begin{align*}
\Ex_{k,t}Z_{k}(t)\leq
\Bigl(q(1-\|t\|_2^2)D_2^2+q\|((\Ex X_i^2(k,t))^{1/2})_{i\le m}\|_q^2+
\bigl\|\bigl(Y_{i}(k,t)\bigr)_{i\leq m}\bigr\|_{2,q,\lambda}^2\Bigr)^{1/2}.
\end{align*}
Convexity of the function $x\to |x|^{q/2}$ yields
\begin{align*}
\|((\Ex X_i^2(k,t))^{1/2})_{i\le m}\|_q^q
&=\sum_{i\leq m}\Bigl|\sum_{j\in J_k(t)}a_{ij}^2t_j^2\Bigr|^{q/2}
\leq \|\pi_k(t)\|_2^q\sum_{j\in J_k(t)}\frac{t_j^2}{\|\pi_k(t)\|_2^2}\sum_{i\leq m}|a_{ij}|^q
\\
&\leq \|\pi_k(t)\|_2^q\max_{j\in J_k(t)}\sum_{i\leq m}|a_{ij}|^q\leq \|\pi_k(t)\|_2^qD_2^q.
\end{align*}
Hence 
\[
\Ex_{k,t}Z_{k}(t)\leq Z_{<k}(t).
\]
For a fixed $d\geq 0$ and $y\in \er^m$ define functions $f\colon\er^m\to \er$ and 
$f_1,\ldots,f_m\colon\er^{|J_k(t)|}\to \er$ via 
\[
f(z)\coloneqq\Bigl(d+\bigl\|\bigl(z_i+y_i\bigr)_{i\leq m}\bigr\|_{2,q,\lambda}^2\Bigr)^{1/2},\quad
f_i(x_i)\coloneqq\sum_{j\in J_{k}(t)}a_{ij}t_jx_{ij}.
\]

The triangle inequalities for the Euclidean norm in $\er^2$ and for the interpolation norm $\|\cdot\|_{2,q,\lambda}$ imply that any $z,z'\in\er^m$.
\begin{align*}
	|f(z)-f(z')| 
	& \le \Bigl((\sqrt{d}-\sqrt{d})^2+\bigl(\bigl\|\bigl(z_i+y_i\bigr)_{i\leq m}\bigr\|_{2,q,\lambda}-\bigl\|\bigl(z_i'+y_i\bigr)_{i\leq m}\bigr\|_{2,q,\lambda}\bigr)^2\Bigr)^{1/2}
	\\ &\le \bigl\|\bigl((z_i-z_i')+y_i\bigr)_{i\leq m}\bigr\|_{2,q,\lambda}.
\end{align*}
Thus, $f$ is Lipschitz with a constant $\sup_{\|z\|_2\leq 1}\|z\|_{2,q,\lambda}\leq   \sup_{\|z\|_2\leq 1 (\lambda\|z\|_2) =} \lambda$. Moreover, since $ (\tfrac p{2-p})^*=\tfrac{p^*}{2}$,
$f_i$ is Lipschitz with a constant
\begin{align*}
\Bigl(\sum_{j\in J_k(t)}a_{ij}^2t_j^2\Bigr)^{1/2}
& \leq 2^{-kp/2}\Bigl(\sum_{j\leq n}a_{ij}^2|t_j|^{2-p}\Bigr)^{1/2}
\leq 2^{-kp/2} \sup_{y\in B_{p/(2-p)}^n}\Bigl(\sum_{j\leq n}a_{ij}^2y_j\Bigr)^{1/2}
\\ & \leq 2^{-kp/2}D_1.
\end{align*}
Hence, the function $x\to f(f_1(x_1),\ldots,f_m(x_m))$ is $\lambda 2^{-kp/2}D_1$-Lipschitz 
on $\er^{|J_k|m}$. Therefore, the Gaussian concentration yields
\[
\Pr_{k,t} \bigl(Z_k(t)\geq Z_{<k}(t)+u\lambda 2^{-kp/2}D_1 \bigr)\leq e^{-u^2/2} \quad \mbox{for } u\geq 0
\]
and thus,
\begin{align*}
\Pr\bigl(\max_{t\in T_{\leq k}}Z_k(t)\geq \max_{t\in T_{\leq k}} & Z_{<k}(t)+u\lambda 2^{-kp/2}D_1 \bigr)
\\ &\leq \sum_{t\in T_{\leq k}}\Ex\Pr_{k,t}\bigl(Z_k(t)\geq Z_{<k}(t)+u\lambda 2^{-kp/2}D_1\bigr)
\leq |T_{\leq k}|e^{-u^2/2}.
\end{align*}
Integration by parts yields 
\[
\Ex \Bigl(\max_{t\in T_{\leq k}}Z_k(t)- \max_{t\in T_{\leq k}} Z_{<k}(t)\Bigr)_+\leq 
C\lambda 2^{-kp/2}D_1\sqrt{\log |T_{\leq k}|},
\]
so 
\[
\Ex\max_{t\in T_{\leq k}}Z_k(t)\leq \Ex\max_{t\in T_{\leq k}}Z_{<k}(t)
+C\lambda 2^{-kp/2}D_1\sqrt{\log |T_{\leq k}|},
\]
which, in view of \eqref{eq:suprepr1} and \eqref{eq:suprepr2}, implies \eqref{eq:indstep}.

In a similar (in fact a bit simpler)  way we show that
\[
\Ex\max_{t\in T_{\leq 0}}
\Bigl(q(1-\|t\|_2^2)D_2^2
+\Bigl\|\Bigl(\sum_{j\leq n}t_ja_{ij}g_{ij}\Bigr)_{i\leq m}\Bigr\|_{2,q,\lambda}^2\Bigr)^{1/2}
\leq \sqrt{q}D_2+C\lambda D_1\sqrt{\Log|T_{\leq 0}|},
\]
which together with \eqref{eq:indstep} completes the inductive proof of \eqref{eq:strongbound}.
\end{proof}

\begin{lem}
\label{lem:cardTk}
We have $\log|T_{\leq k}|\lesssim 2^{pk}\Log( n(k+1))$ for  $p<\infty$ and $k=0,1,\ldots$.
\end{lem}

\begin{proof}
Each $t\in T_{\leq k}$ has support of cardinality at most $2^{pk}$. Every nonzero coordinate
of such $t$ may be chosen in at most $ 2k+1$ ways, so
\[
|T_{\leq k}|\leq \sum_{0\le l \le 2^{pk}}\binom{n}{l}( 2k+1)^l
\leq\sum_{0\le l \le 2^{pk}}(2n(k+1))^l\leq (2n(k+1)))^{2^{pk}+1}.
\qedhere
\]
\end{proof}

\begin{proof}[Proof of Proposition~\ref{prop:bdd-coordinates-q}]
It suffices to prove \eqref{eq:toshow2}, since \eqref{eq:toshow2-dual} follows by duality. 

Choose $k_0=k_0(n)\coloneqq \lceil \log_2 n\rceil+2 \sim \Log n$. Then by \eqref{eq:redtointnorms}, \eqref{eq:redtoT},  and Lemmas~ \ref{lem:supoverTk} and \ref{lem:cardTk} we get
\begin{align*}
\Ex\sup_{s\in B_{q^*}^m\cap aB_\infty^n}\sup_{t\in B_2^n}
\sum_{i\leq m,j\leq n}a_{ij}g_{ij}s_it_j
&\leq \sqrt{2}\Ex\sup_{t\in B_2^n}
\Bigl\|\Bigl(\sum_{j\leq n}a_{ij}g_{ij}t_j\Bigr)_{i\leq m}\Bigr\|_{2,q,a^{(2-q^*)/2}}
\\
&\lesssim
\Ex\sup_{t\in T_{\leq k_0}}
\Bigl\|\Bigl(\sum_{j\leq n}a_{ij}g_{ij}t_j\Bigr)_{i\leq m}\Bigr\|_{2,q,a^{(2-q^*)/2}
}
\\
&\lesssim
\sqrt{q}D_2+a^{(2-q^*)/2}D_1\sum_{k=0}^{ k_0} 2^{-pk/2}\sqrt{\Log|T_{\leq k}|}
\\
&\lesssim 
\sqrt{q}D_2+a^{(2-q^*)/2}D_1(k_0+1)\sqrt{\Log n}
\\
&\lesssim \sqrt{q}D_2+a^{(2-q^*)/2}D_1\Log^{3/2} n.
\qedhere
\end{align*}
\end{proof}

Now we are ready to prove Proposition~\ref{prop:worseD3'-intro} in the case  
$(p^*,q)\notin [2,3]^2$. We postpone the proof in the case $(p^*,q)\in [2,3]^2$ to
Section~\ref{sect:appendix}.

\begin{proof}[{{Proof of Proposition~\ref{prop:worseD3'-intro} in the case $(p^*,q)\notin [2,3]^2$.}}]
By duality it suffices to consider the case $p^*\le q$. Then $q> 3$.

Let $a\coloneqq\Log^{-3/(2-q^*)}(mn)$.   Then Proposition~\ref{prop:bdd-coordinates-q} implies
\begin{align*}
\Ex \sup_{s\in B_{q^*}^m\cap aB_{\infty}^m}\ \sup_{t\in B_{p}^n}  
\sum_{i,j}a_{ij}g_{ij}s_it_j 
&\lesssim \sqrt{q}D_2+ a^{(2-q^*)/2}\Log^{3/2}(mn) D_1 
\\ &= \sqrt{q}D_2+D_1.
\end{align*}

Moreover, Proposition~\ref{prop:largecoordinates} (applied with $\gamma =\Log^{3/(2-q^*)}(mn)$) 
yields
\begin{align*}
\Ex  \sup_{s\in B_{q^*}^m}\ & \sup_{t\in B_{p}^n}  
\sum_{i,j}a_{ij}g_{ij}s_it_j\ind_{\{|s_i|\ge \Log^{-3/(2-q^*)}(mn)\}}
\\
& \lesssim \sqrt{p^*}D_1  +\Log^{1/2+3q^*/(2(2-q^*))} (mn)\max_{i,j}|a_{ij}| .
\end{align*}
The two displayed inequalities yield the assertion (with $\gamma= 5$, 
since $q^*\leq 3/2$).
\end{proof}

\subsection{Exponent reduction} 
\label{subsect:logexp-reduction}
Proposition~\ref{prop:worseD3'-intro} shows that
\begin{equation*}	
\Ex\|G_A\|_{p\to q}
\lesssim \sqrt{p^*}D_1 +\sqrt{q}D_2+ \Log^{\gamma}(mn) \max_{i,j}|a_{ij}|.
\end{equation*}
In this and the next subsection we show how to reduce the 
exponent $\gamma$ and obtain Proposition~\ref{prop:dimdependent}.
The argument is based on the analysis of the graph associated to the matrix $A$. 
Such an analysis was performed in the Bernoulli case in \cite{Lbern}.
To run the exponent reduction procedure we also need to 
obtain some weaker estimates with constants depending on the degree of this graph
(we do this in Subsection~\ref{sect:degdepbounds} in the case when $p^*\vee q>2$, 
and in Section~\ref{sect:appendix} when $p^*,q\in [2,4)$).
Since we work in the Gaussian setting and not with bounded Bernoulli entries as in \cite{Lbern}, 
we face some new difficulties.
It is possible to deal with them making the advantage of the fact that  $p^*\wedge q >2$,  
whereas in the case $p^*,q\in [2,4)$ one may use more involved combinatorial arguments and 
exploit the bounds following from the Slepian-Fernique lemma. The exponent reduction procedure 
in the latter case is run in Section~\ref{sect:appendix}.

With an $m\times n$ matrix $A=(a_{ij})_{i\le m, j\le n}$ we associate the set 
\[
E_A\coloneqq\{(i,j)\colon a_{ij}\neq 0\}\subset [m]\times [n].
\]
We set
\begin{align*}
d_{1,A}&\coloneqq\max_{i\le m}|\{j\le n \colon (i,j)\in E_A\}|,
\\ d_{2,A}&\coloneqq\max_{j\le n}|\{i\le m \colon (i,j)\in E_A\}|,
\\ d_A&\coloneqq d_{1,A}\vee d_{2,A}.
\end{align*}
We do not assume that $A$ is symmetric, but we may treat $([m],[n],E_A)$ as a bipartite  graph. 
Then $d_A$ is its degree. 
We write $i\sim_A j$ if $(i,j)\in E_A$, and $I\sim_A j$ (for $I\subset[m]$)  
if there exists $i\in I$ such that $i\sim_A j$.

By $\rho=\rho_A$ we denote the distance on $ [m]\sqcup[n]$ induced by $E_A$. 
A subset $I\subset [m]$ or $J\subset [n]$ is called $r$-connected if
it is a connected subset of the graph 
 $G_A(r)\coloneqq( [m]\sqcup [n],E_A(r))$, where $( u,v)\in E_A(r)$ if and only if $\rho_A( u,v)\leq r$  for $u,v\in[m]\sqcup [n]$.

We denote by $\mathcal{I}_r(k)=\mathcal{I}_r(k,A)$ 
(respectively, $\mathcal{J}_r(k)=\mathcal{J}_r(k,A)$) the family of all $r$-connected 
subsets of $[m]$ (respectively, $[n]$) of cardinality $k$. 
Note that the maximal degree of  $G_A(r)$ is at most 
$d_A+d_A(d_A-1)+\cdots+d_A(d_A-1)^{r-1}\le d_A^r$.
Thus, \cite[Lemma~3.2]{Lbern} implies
\begin{equation} 
\label{eq:size-Ir(k)}
|\mathcal{I}_r(k)|\leq m4^kd_A^{rk} \quad \text{and} \quad |\mathcal{J}_r(k)|\leq n4^kd_A^{rk}.
\end{equation}

For $I\subset [m]$  we define 
\[
I'=\{j\in [n]\colon\ \exists_{i\in I}\, (i,j)\in E_A \}.
\]
In a similar way we define  $J'\subset [m]$  for $J\subset [n]$.

The next proposition reveals how one may reduce the exponent at the logarithmic term to  deduce
the desired bound~\eqref{eq:pqsqrtlogmn} from a weaker 
estimate depending on $d_{1,A}$ and $d_{2,A}$.

\begin{prop}
\label{prop:fromuptopowerdA}
Let $p^*,q\in [2,\infty)$, $ 0\le \gamma_1< \frac{1}{p^*}$, $0\le \gamma_2<\frac{1}{q}$, 
$\gamma_3>0$ and 
$\alpha_1,\alpha_2,\alpha_3\geq 1$  be such that for every $m\times n$ matrix $A$, 
\begin{align}	
\label{eq:uptopowersdA}
\Ex\|G_A\|_{p\to q}
\leq \alpha_1D_1+\alpha_2D_2
+\alpha_3\bigl(d_{1,A}^{\gamma_1}+d_{2,A}^{\gamma_2}+\sqrt{\Log(mn)}\bigr)\max_{i,j}|a_{ij}|
\end{align}
and
\begin{equation}	
\label{eq:assump-worseexplog}
\Ex\|G_A\|_{p\to q}
\le \alpha_1D_1+\alpha_2D_2 + \alpha_3 \Log^{ \gamma_3}(mn) \max_{i,j}|a_{ij}|.
\end{equation}
Then for every $m\times n$ matrix $A$, 
\begin{multline*}
\Ex\|G_A\|_{p\to q} \lesssim \frac{\Log\gamma_3}{-\ln((\gamma_1p^*)\vee (\gamma_2 q)))} 
\\
\cdot \Bigl(  (\alpha_1+\alpha_3)  D_1+ (\alpha_2+\alpha_3)D_2
+\alpha_3 \sqrt{\Log(mn)}\max_{i,j}|a_{ij}| \Bigr).
\end{multline*}
\end{prop}

\begin{proof}
Let $\delta=(( p^*\gamma_1)\vee (q\gamma_2 ))^{-1}-1$. 
Then $p^*\gamma_1 ,q\gamma_2 \leq (1+\delta)^{-1}$.
Let
\[
k_0\coloneqq \inf\{k\geq 1\colon (1+\delta)^{k+1}\geq  2\gamma_3\}
\]
and define
\[
u_k\coloneqq (\Log(mn))^{-\frac{1}{2}(1+\delta)^{k+1}},\quad k=0,1,\ldots,k_0.
\]
Let 
\[
M\coloneqq D_1+D_2,\quad \widetilde{M}\coloneqq\alpha_1D_1+\alpha_2D_2
+\alpha_3\sqrt{\Log(mn)} \max_{i,j}|a_{ij}|.
\]
Define matrices $A_k=(a_{ij}(k))_{i\leq m,j\leq n}$, $k=0,1,\ldots,k_0+1$ by
\begin{gather*}
a_{ij}(0)=a_{ij}\ind_{\{|a_{ij}|\geq u_0M\}}, \qquad
a_{ij}(k_0+1)=a_{ij}\ind_{\{|a_{ij}|< u_{k_0} M\}},
\\ 
\text{and}\quad 
a_{ij}(k)=a_{ij}\ind_{\{u_{k}M\leq |a_{ij}|< u_{k-1}M\}} \quad \text{for }k=1,\ldots,k_0.
\end{gather*}

Then $A=\sum_{k=0}^{k_0+1}A_k$, so 
\[
\|G_A\|_{p\to q}\leq \sum_{k=0}^{k_0+1}\|G_{A_k}\|_{p\to q} .
\]

Observe that for any $u\geq 0$,
\begin{align*}
uM\max_{i}|\{j\colon |a_{ij}|\geq uM\}|^{1/p^*}&\leq \max_i \|(a_{ij})_j\|_{p^*}\leq M,
\\
uM\max_{j}|\{i\colon |a_{ij}|\geq uM\}|^{1/q}&\leq \max_j \|(a_{ij})_i\|_{q}\leq M.
\end{align*}
Thus, 
\[
d_{1,A_k}\leq u_k^{-p^*},\ d_{2,A_k}\leq u_k^{-q} \quad \text{for } k=0,1,\ldots,k_0.
\]

Since
\[
d_{1,A_0}^{\gamma_1}+d_{2,A_0}^{\gamma_2}\leq u_0^{-p^*\gamma_1}+u_0^{-q\gamma_2}
\leq 2u_0^{-1/(1+\delta)}=2\sqrt{\Log(mn)} ,
\]
assumption \eqref{eq:uptopowersdA} (applied to the matrix $A_0$) yields
\[
\|G_{A_0}\|_{p\to q}\lesssim  \widetilde{M}.
\]

Moreover, assumption  \eqref{eq:uptopowersdA}, applied to the matrix $A_k$,  
yields for $1\leq k\leq k_0$, 
\begin{align*}
\|G_{A_k}\|_{p\to q}
&\leq  \widetilde{M}+\alpha_3 u_{k-1}M(d_{1,A_k}^{\gamma_1}+d_{2,A_k}^{\gamma_2})
\leq  \widetilde{M}+\alpha_3Mu_{k-1}(u_k^{-p^*\gamma_1}+u_k^{-q\gamma_2})
\\
&\leq  \widetilde{M}+2\alpha_3Mu_{k-1}u_k^{-1/(1+\delta)}= \widetilde{M}+2\alpha_3M.
\end{align*}

Finally,  we apply  assumption \eqref{eq:assump-worseexplog} to get
\[
\|G_{A_{k_0+1}}\|_{p\to q} \le \alpha_1D_1+\alpha_2D_2 +\alpha_3M.
\]
We finish the proof by noting that $k_0\lesssim  \frac{\Log\gamma_3}{-\ln((\gamma_1p^*)\vee (\gamma_2 q))}$.
\end{proof} 

\subsection{Degree dependent bounds}	
\label{sect:degdepbounds}

In order to provide a weaker  degree dependent estimate \eqref{eq:uptopowersdA} we are going to 
use the following proposition.
For $\gamma>1$ we define the set of $\gamma$-flat vectors from $B_{u}^r$ with support $I$ by
\[
K(u,r,I,\gamma)
=\Bigl\{s\in B_{u}^r\colon \mathrm{supp}(s)=I,\ 
\max_{i\in I}|s_i|\leq \gamma\min_{i\in I}|s_i|\Bigr\}.
\]
Note that if $s\in K(u,r,I,\gamma)$, then $\max_{i\in I}|s_i|\le \gamma |I|^{-1/r}$.
For an $m\times n$ matrix $A$ we also put
\[
Y_{k,l}(\gamma)=Y_{k,l}(\gamma,A,p,q)=\max_{I\in \mathcal{I}_4(k)}\max_{J\in \mathcal{J}_4(l)}\
\sup_{s\in K(q^*,m,I,\gamma)}\ \sup_{t\in K(p,n,J,\gamma)}
\sum_{i\in I,j\in J}a_{ij}g_{ij}s_it_j.
\]

\begin{prop}
\label{prop:redtosupflat}
For every $\ve\in (0,1]$ and $1\leq p\leq 2\leq q\leq \infty$,
\begin{align}
\label{eq:redtosupflat1}
\Ex \|G_A\|_{p\to q}
&\lesssim \frac{1}{\ve}\Ex\max_{1\leq k\leq m}\max_{1\leq l\leq n}Y_{k,l}(d_A^\ve)+
\Ex\max_{i,j}|a_{ij}g_{ij}|
\\
\label{eq:redtosupflat2}
&\lesssim \frac{1}{\ve}\Bigl(\max_{1\leq k\leq m}\max_{1\leq l\leq n}\Ex Y_{k,l}(d_A^\ve)
+\sqrt{\Log (mn)}\,\|(a_{ij})\|_\infty\Bigr).
\end{align}
\end{prop}

\begin{proof}
Fix $\ve\in (0,1]$.
For  $s\in B_{q^*}^m$, $t\in B_p^n$ and $k,l=1,2,\ldots$ define
\begin{align*}
I_k(s)&\coloneqq\{i\in [m]\colon\  d_A^{-k\ve}<|s_i|\leq d_A^{(1-k)\ve}\},\\
J_l(t)&\coloneqq\{j\in [n]\colon\  d_A^{-l\ve}<|t_j|\leq d_A^{(1-l)\ve}\}.
\end{align*}
Then
\begin{equation*}
\sum_{k\geq 1}d_A^{-q^*k\ve}|I_k(s)|\leq \|s\|_{q^*}^{q^*},\quad
\sum_{l\geq 1}d_A^{-pl\ve}|J_l(t)|\leq \|t\|_p^p
\end{equation*}
and
\[
\Ex\|G_A\|_{p\to q}=
\Ex\sup_{\|s\|_{q*}\leq 1}\sup_{\|t\|_p\leq 1}
\sum_{k,l\geq 1}\sum_{i\in I_k(s)}\sum_{j\in J_l(t)}a_{ij}g_{ij}s_it_j.
\]

Define
\[
M_A=\max_{i,j}|a_{ij}g_{ij}|.
\]
Observe that for any $s\in B_{q^*}^m$ and $t\in B_p^n$,
\begin{align*}
\Biggl|\sum_{k\geq 1}\sum_{l\geq k+1/\ve+1}\sum_{i\in I_k(s)}\sum_{j\in J_l(t)} &
a_{ij}g_{ij}s_it_j\Biggr|
\leq \sum_{k\geq 1}\sum_{i\in I_k(s)}|s_i|\sum_{l\geq k+1/\ve+1}\sum_{j\in J_l(t)}
|a_{ij}g_{ij}||t_j|
\\
&\leq M_A\sum_{k\geq 1}\sum_{i\in I_k(s)}|s_i|d_A^{-k\ve-1}\sum_{j}\ind_{E_A}(i,j)
\\
&\leq M_A\sum_{k\geq 1}\sum_{i\in I_k(s)}s_i^2 =  M_A\|s\|_2^2\leq M_A\|s\|_{q^*}^2\leq M_A.
\end{align*}
Similarily,
\[
\Biggl|\sum_{l\geq 1}\sum_{k\geq l+1/\ve+1}\sum_{i\in I_k(s)}\sum_{j\in J_l(t)}
a_{ij}g_{ij}s_it_j\Biggr|\leq M_A\|t\|_2^2\leq M_A.
\]
Therefore, it is enough  to estimate
\begin{align*}
\sum_{|r|\leq 1/\ve +1}&\Ex \sup_{\|s\|_{q^*}\leq 1}\sup_{\|t\|_p\leq 1}
\sum_{\substack{k,l\geq 1\\l-k=r}}\sum_{i\in I_k(s)}\sum_{j\in J_l(t)}a_{ij}g_{ij}s_it_j
\\
&\leq
(2/\ve+3)\max_{|r|\leq 1/\ve +1} \Ex \sup_{\|s\|_{q^*}\leq 1}\sup_{\|t\|_p\leq 1}
\sum_{\substack{k,l\geq 1\\l-k=r}}\sum_{i\in I_k(s)}\sum_{j\in J_l(t)}a_{ij}g_{ij}s_it_j.
\end{align*}

Let us fix $|r|\leq 1/\ve+1$, $s\in B_{q^*}^m$, and $t\in B_p^n$. For  $k,l\geq 1$ with $l-k=r$ let 
$I_{k,1},\ldots,I_{k,u_k}$ be $2$-connected components of $I_k(s)\cap J_l(t)'$ and 
$J_{k,u}\coloneqq I_{k,u}' \cap J_{l}(t)$, $1\leq u\leq u_k$. 
Then the sets $(I_{k,u})_{k\geq 1,1\leq u\leq u_k}$ are nonempty pairwise disjoint 
$2$-connected subsets of $[m]$ and the sets $(J_{k,u})_{k\geq 1,1\leq u\leq u_k}$ are 
nonempty pairwise disjoint $4$-connected subsets of $[n]$.
Define  vectors $s_{k,u},\bar{s}_{k,u}\in \er^m$ and $t_{k,u},\bar{t}_{k,u}\in \er^n$ by
\begin{gather*}
s_{k,u}\coloneqq(s_i\ind_{\{i\in I_{k,u}\}})_{i\leq m},\qquad \bar{s}_{k,u}\coloneqq\frac{s_{k,u}}{\|s_{k,u}\|_{q^*}},\\
t_{k,u}\coloneqq(t_j\ind_{\{j\in J_{k,u}\}})_{j\leq n},\qquad \bar{t}_{k,u}\coloneqq\frac{t_{k,u}}{\|t_{k,u}\|_{p}}.
\end{gather*}
Since $I_{k,u}\subset I_k(s)$ and $J_{k,u}\subset J_l(t)$, $s_i$ and $t_j$ do not vanish if 
$i\in I_{k,u}$ and $j\in J_{k,u}$. Thus,   $\bar{s}_{k,u}\in K(q^*,m,I_{k,u},d_A^{\ve})$ and
$\bar{t}_{k,u}\in K(p,n,J_{k,u},d_A^{\ve})$, so 
\begin{align*}
\sum_{\substack{k,l\geq 1\\l-k=r}}\sum_{i\in I_k(s)}\sum_{j\in J_l(t)}a_{ij}g_{ij}s_it_j
&=\sum_{k}\sum_{1\leq u\leq u_k}\sum_{i\in I_{k,u}}\sum_{j\in J_{k,u}}a_{ij}g_{ij}s_it_j
\\
& \leq 
\sum_{k}\sum_{1\leq u\leq u_k} Y_{|I_{k,u}|,|J_{k,u}|}(d_A^\ve)\|s_{k,u}\|_{q^*}\|t_{k,u}\|_p
\\
&\leq 
\max_{1\leq  \widetilde{k}\leq m,1\leq  \widetilde{l}\leq n}Y_{\widetilde{k},\widetilde{l}}
(d_A^\ve)\sum_{k}\sum_{1\leq u\leq u_k}\|s_{k,u}\|_{q^*}\|t_{k,u}\|_p.
\end{align*}
Moreover,
\begin{align*}
\sum_{k}\sum_{1\leq u\leq u_k}\|s_{k,u}\|_{q^*}\|t_{k,u}\|_p
&\leq \Bigl(\sum_{k}\sum_{1\leq u\leq u_k}\|s_{k,u}\|_{q^*}^2\Bigr)^{1/2}
\Bigl(\sum_{k}\sum_{1\leq u\leq u_k}\|t_{k,u}\|_{p}^2\Bigr)^{1/2}
\end{align*}
\begin{align*}
&\leq \Bigl(\sum_{k}\sum_{1\leq u\leq u_k}\|s_{k,u}\|_{q^*}^{q^*}\Bigr)^{1/2}
\Bigl(\sum_{k}\sum_{1\leq u\leq u_k}\|t_{k,u}\|_{p}^p\Bigr)^{1/2}
\\
&=\|s\|_{q^*}^{q^*/2}\|t\|_{p}^{p/2}\leq 1.
\end{align*}

The above calculations show that
\[
 \|G_A\|_{p\to q}\lesssim \frac{1}{\ve}\max_{1\leq k\leq m,1\leq l\leq n}Y_{k,l}(d_A^\ve)
+M_A,
\]
 which yield  bound \eqref{eq:redtosupflat1}.

Moreover,  $\Ex M_A\lesssim \Log^{1/2}(mn)\|(a_{ij})\|_\infty$ and, 
by estimate \eqref{eq:supgauss} and inclusions $B_{q^*}^m\subset B_2^m$ and $B_p^n\subset B_2^n$,
\begin{align*}
\Ex \max_{1\leq k\leq m,1\leq l\leq n}Y_{k,l}(d_A^\ve)
&\lesssim \max_{1\leq k\leq m,1\leq l\leq n}\Ex Y_{k,l}(d_A^\ve)
\\ 
&\qquad+\sqrt{\Log (mn)}\sup_{s\in B_{q^*}^m}\sup_{t\in B_p^n}
\Bigl(\sum_{i,j}a_{ij}^2s_i^2t_j^2\Bigr)^{1/2}
\\
&=\max_{1\leq k\leq m,1\leq l\leq n}\Ex Y_{k,l}(d_A^\ve)+\sqrt{\Log (mn)}\, \|(a_{ij})\|_\infty,
\end{align*}
so estimate \eqref{eq:redtosupflat2} follows.
\end{proof}

Now we need a bound for the expectation of $Y_{k,l}(\gamma)$.
It is derived in the following proposition.

\begin{prop}
\label{prop:est2Ykl}
For every $p^*,q\in [2,\infty)$ and $\gamma \geq 1$,
\begin{align}
\notag
\Ex Y_{k,l}(\gamma)
&\lesssim 
\sqrt{p^*}D_1+\sqrt{q}D_2+\sqrt{\Log (mn)}\max_{i,j}|a_{ij}|
\\
\label{eq:est2Yklall}
&\qquad +\gamma^{3/2} k^{-1/q^*}l^{-1/p}\min\{kl^{1/2},k^{1/2}l,kd_{1,A}^{1/2},ld_{2,A}^{1/2}\}
\sqrt{\Log(d_A)}\max_{i,j}|a_{ij}|.
\end{align}
\end{prop}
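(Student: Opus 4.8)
The plan is to realise $Y_{k,l}(\gamma)$ as the supremum of the bilinear Gaussian process $(s,t)\mapsto\sum_{i,j}a_{ij}g_{ij}s_it_j$ over $s\in\mathcal K:=\bigcup_{I\in\mathcal I_4(k)}K(q^*,m,I,\gamma)$ and $t\in\mathcal L:=\bigcup_{J\in\mathcal J_4(l)}K(p,n,J,\gamma)$ (since a vector supported on $I$ only sees the entries $a_{ij}$ with $i\in I$, one has $\sum_{i\in I,j\in J}=\sum_{i,j}$ for such $s,t$), and then to run the Slepian--Fernique argument behind Lemma~\ref{lem:slepian-flat-small} and \cite[Proposition~3.1]{APSS} while exploiting that the supports involved are $4$-connected. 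First I would reduce to genuinely flat vectors: decomposing a vector of $K(q^*,m,I,\gamma)$ along the dyadic blocks of its coordinates and applying an estimate of the type of Lemma~\ref{lem:slepian-flat-to-unflat-small-infty} to each block, one shows that $\mathcal K$ lies in $\gamma^{3/2}$ times the convex hull of vectors $|\widetilde I|^{-1/q^*}(\eta_i\ind_{i\in\widetilde I})_i$ with $\widetilde I$ a subset of some $I\in\mathcal I_4(k)$, and similarly for $\mathcal L$; keeping track of the block sizes ensures the number of admissible supports stays comparable to $|\mathcal I_4(k)|$, $|\mathcal J_4(l)|$. Passing to extreme points, it then remains to bound, up to the factor $\gamma^{3/2}$, the expectation of a maximum of suprema of the kind handled in Lemma~\ref{lem:slepian-flat-small}.

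Next I would apply Lemma~\ref{lem:KtimesL} with $K,L$ the corresponding (unions of) sets of flat vectors, producing two ``one-sided'' Gaussian suprema, $\Ex\sup_{s,t}\sum_i s_ig_i(\sum_j t_j^2a_{ij}^2)^{1/2}$ and its transpose, together with the two ``diagonal'' terms $\tfrac12\Ex\sup_{s}\sum_i s_i^2Y_i$ and $\tfrac12\Ex\sup_{t}\sum_j t_j^2Y_{m+j}$, where $Y$ is the Gaussian vector with variances bounded in \eqref{eq:VarYi}. For the one-sided terms I would follow the proof of Lemma~\ref{lem:slepian-flat-small} closely, resolving the maxima over the supports via Lemma~\ref{lem:supgauss}: each ``expectation'' part contributes the summands $\sqrt{p^*}D_1$ and $\sqrt{q}D_2$ through the one-vector bounds \eqref{eq:fixeds}, \eqref{eq:fixedt} and \eqref{eq:maxlqcolumns}, while each ``fluctuation'' part equals $\sqrt{\log(\#\text{ supports})}$ times the supremum of the standard deviation. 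By \eqref{eq:size-Ir(k)} the number of admissible supports is $\le m4^{k}d_A^{4k}$ and $\le n4^{l}d_A^{4l}$, so the logarithmic weights appearing in the fluctuations are $\lesssim\sqrt{k\Log d_A}+\sqrt{\Log m}$ and $\lesssim\sqrt{l\Log d_A}+\sqrt{\Log n}$; this is precisely where restricting to connected supports matters, as it turns the $\Log(mn)$ one would incur over all subsets of the prescribed cardinality into $\Log d_A$ (the $\Log m,\Log n$ remnants being absorbed by the $\sqrt{\Log(mn)}\max_{i,j}|a_{ij}|$ summand).

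To bound the supremum of the standard deviation over flat points I would use
\[
\sum_{i,j}a_{ij}^2s_i^2t_j^2\le k^{-2/q^*}l^{-2/p}\!\!\!\sum_{(i,j)\in E_A\cap(I\times J)}\!\!\!a_{ij}^2\le k^{-2/q^*}l^{-2/p}\min\{kd_{1,A},ld_{2,A},kl\}\max_{i,j}a_{ij}^2 ,
\]
and then, choosing for each pair $(k,l)$ whether to resolve the maximum over $\mathcal I_4(k)$ or the one over $\mathcal J_4(l)$ first and combining the three estimates $kd_{1,A},ld_{2,A},kl$ via the elementary fact that $\min\{B_1,B_3\}+\min\{B_2,B_4\}\le 2\min\{B_1,B_2,B_3,B_4\}$, one lands exactly on $\min\{kl^{1/2},k^{1/2}l,kd_{1,A}^{1/2},ld_{2,A}^{1/2}\}\sqrt{\Log d_A}\max_{i,j}|a_{ij}|$. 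The diagonal terms are treated in the same spirit: on $\mathcal K$ one has $\sum_i s_i^2\le 1$, $\supp(s)\in\mathcal I_4(k)$ and all nonzero coordinates of $s$ of comparable size, which together with \eqref{eq:VarYi}, \eqref{eq:size-Ir(k)} and Lemma~\ref{lem:supgauss} keeps $\Ex\sup_{s\in\mathcal K}\sum_i s_i^2Y_i$ of the order of the terms already obtained.

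The step I expect to be the main obstacle is this last accounting. Producing the sharp power $\gamma^{3/2}$ and the precise four-term minimum rather than a cruder bound forces one to distribute the flatness slack $\gamma$ and the logarithmic weights $\sqrt{k\Log d_A}$, $\sqrt{l\Log d_A}$ optimally between the two vector factors and between the ``typical size'' and ``union bound'' contributions, and to control simultaneously the several auxiliary nets — over the connected supports, and over the flat sets themselves with a constant-scale net as in the proof of Lemma~\ref{lem:largecoordinates} — so that none of them leaks an extra power of $\gamma$, $k$, $l$ or $d_A$; the diagonal terms, where one cannot simply bound $\sup_{s\in\mathcal K}\sum_i s_i^2Y_i$ by $\Ex\max_i|Y_i|$ without losing too much, require particular care.
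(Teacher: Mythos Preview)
Your approach via the Slepian--Fernique comparison (Lemma~\ref{lem:KtimesL}) has a genuine gap, precisely at the step you flag as the obstacle: the diagonal terms $\Ex\sup_{s\in\mathcal K}\sum_i s_i^2Y_i$ and $\Ex\sup_{t\in\mathcal L}\sum_j t_j^2Y_{m+j}$ are \emph{not} controlled by the target bound once $p^*\vee q\ge 4$. By \eqref{eq:VarYi} one has $\Var(Y_i)\le\|(a_{ij})_j\|_4^2$, and since $\sum_i s_i^2\le 1$ the best pointwise bound is $\sup_{s\in\mathcal K}\sum_i s_i^2Y_i\le\max_i|Y_i|$; neither the support constraint $\supp(s)\in\mathcal I_4(k)$ nor the flatness of $s$ improves this, because $\bigcup_{I\in\mathcal I_4(k)}I$ can cover all of $[m]$. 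Thus you are left with $\Ex\max_i|Y_i|\lesssim\sqrt{\Log m}\max_i\|(a_{ij})_j\|_4$. For $p^*>4$ the $\ell_4$-norm is not dominated by $\|(a_{ij})_j\|_{p^*}$, and a concrete obstruction is the matrix $a_{ij}=\ind_{\{j\le d\}}$ with $p^*=q$ large and $d=m$: then $\sqrt{\Log m}\,d^{1/4}$ is much larger than $\sqrt{p^*}D_1+\sqrt{\Log(mn)}\max_{i,j}|a_{ij}|\sim d^{1/p^*}+\sqrt{\Log(md)}$, and it is also not absorbed by the last summand of \eqref{eq:est2Yklall} since that term carries a factor $l^{-1/p}$ while the diagonal contribution is independent of $l$. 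This is exactly the limitation that confines Proposition~\ref{prop:slepianp*qbelow4} to the range $p^*,q<4$.

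The paper's proof avoids Slepian--Fernique altogether. It relaxes $Y_{k,l}(\gamma)$ to $\max_{I\in\mathcal I_4(k)}X_I$, where $X_I$ is the supremum over $s\in B_{q^*}^I\cap\gamma k^{-1/q^*}B_\infty^I$ and $t\in B_p^n\cap\gamma l^{-1/p}B_\infty^n$ (the $4$-connectedness of $J$ is dropped entirely). For fixed $I$ one takes a $\tfrac12$-net of size $5^{|I|}=5^k$ in the $s$-domain, bounds the inner expectation for each fixed $s$ by $\sqrt{p^*}D_1$ via \eqref{eq:fixeds}, and applies Lemma~\ref{lem:supgauss} first over the net and then over $\mathcal I_4(k)$; this yields $\Ex Y_{k,l}\lesssim\sqrt{p^*}D_1+(\sqrt{k\Log d_A}+\sqrt{\Log m})\sigma_{k,l}$, where the weak variance $\sigma_{k,l}$ is estimated directly from the $\ell_\infty$-constraints (three different ways, producing the $l^{1/2}$ and $d_{1,A}^{1/2}$ options). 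The symmetric argument with $J\in\mathcal J_4(l)$ gives $\sqrt{q}D_2$ together with the $k^{1/2}$ and $d_{2,A}^{1/2}$ options, and the minimum of the two bounds yields \eqref{eq:est2Yklall}. No comparison process, no $Y_i$'s, and hence no $\ell_4$-obstruction.
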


\begin{proof}
Observe that
\[
Y_{k,l}(\gamma)\leq \max_{I\in \mathcal{I}_4(k)}X_I,
\]
where
\begin{align*}
X_I&=X_I(p,q,k,l,\gamma)
\\
&\coloneqq\sup\Bigl\{\sum_{i\in I,j}a_{ij}g_{ij}s_it_j\colon 
s\in B_{q*}^I\cap \gamma k^{-1/q^*}B_\infty^I,\ t\in B_p^n\cap \gamma l^{-1/p}B_\infty^n\Bigr\}.
\end{align*}

Define 
\begin{align*}
\sigma_{k,l}&=\sigma_{k,l}(p,q,\gamma)
\\
&\coloneqq\sup \Biggl\{\sqrt{\sum_{i,j}a_{ij}^2s_i^2t_j^2} 
\colon s\in B_{q*}^{m}\cap \gamma k^{-1/q^*}B_\infty^{m},
\ t\in B_p^n\cap \gamma l^{-1/p}B_\infty^n\Biggr\}.
\end{align*}
Let us fix $I\in \mathcal{I}_4(k)$ and choose  a $1/2$-net $ S$ in 
$B_{q^*}^I\cap \gamma k^{-1/q^*}B_\infty^I$, with respect to the norm determined by this set, of cardinality $5^k$. Then
\[
\Ex X_I
\leq 2\Ex\sup_{s\in  S}\sup_{t\in B_p^n\cap \gamma l^{-1/p}B_\infty^m}
\sum_{i\in I,j}a_{ij}g_{ij}s_it_j.
\]
By inequality \eqref{eq:supgauss}  we have
\begin{align*}
\Ex X_I
&\lesssim \sup_{s\in  S}
\Ex \sup_{t\in B_p^n\cap \gamma l^{-1/p}B_\infty^m}\sum_{i\in I,j}a_{ij}g_{ij}s_it_j
+\sqrt{\Log |S|}\, \sigma_{k,l}.
\end{align*}
By \eqref{eq:fixeds}
\[
\sup_{s\in  S}\Ex \sup_{t\in B_p^n\cap \gamma l^{-1/p}B_\infty^m}\sum_{i\in I,j}a_{ij}g_{ij}s_it_j
\leq  \sqrt{p^*}D_1 .
\]
Thus,
\[
\Ex X_I\lesssim \sqrt{p^*}D_1+\sqrt{k}\sigma_{k,l}.
\]
Applying estimate \eqref{eq:supgauss} again and using \eqref{eq:size-Ir(k)} we get 
\begin{align}
\notag
\Ex Y_{k,l}
&\lesssim \max_{I\in \mathcal{I}_4(k)}\Ex X_I
+\sqrt{\Log| \mathcal{I}_4(k)|}\, \sigma_{k,l}
\\
\label{eq:estYkla}
&\lesssim  \sqrt{p^*}D_1+(\sqrt{k\Log(d_A)}+\sqrt{\Log m})\sigma_{k,l}.
\end{align}

To estimate $\sigma_{k,l}$ observe first that $B_{q^*}^I\subset B_2^I$ and 
$B_p^n\subset B_2^n$, thus
\[
\sup_{t\in  B_{q*}^I}\sup_{s\in B_p^n} \sum_{i\in I,j}a_{ij}^2s_i^2t_j^2
\leq \max_{i,j}|a_{ij}|^2.
\]
Moreover, for any  $s\in B_{q*}^{m}\cap \gamma k^{-1/q^*}B_\infty^{m}$ and 
$t\in B_p^n\cap \gamma l^{-1/p}B_\infty^n$ we have
\begin{align*}
\sum_{i,j}a_{ij}^2s_i^2t_j^2
&\leq \|s\|_\infty^{2-q^*}\|t\|_\infty^{2-p}\max_{i,j}|a_{ij}|^2\sum_{i,j}|s_i|^{q^*}|t_j|^p
\\
&\leq \gamma^{4-q^*-p}k^{1-2/q^*}l^{1-2/p}\max_{i,j}|a_{ij}|^2
\\
&\leq \gamma^{2}k^{1-2/q^*}l^{1-2/p}\max_{i,j}|a_{ij}|^2
\end{align*}
and
\begin{align*}
\sum_{i,j}a_{ij}^2s_i^2t_j^2
&\leq \|s\|_\infty^{2-q^*}\|t\|_\infty^{2}\sum_{i,j}a_{ij}^2|s_i|^{q*}
\leq  \|s\|_\infty^{2-q^*}\|t\|_\infty^{2}\max_{i}\sum_{j}a_{ij}^2
\\
&\leq  \|s\|_\infty^{2-q^*}\|t\|_\infty^{2}d_{1,A}\max_{i,j}|a_{ij}|^2
\leq \gamma^{4-q^*}k^{1-2/q^*}l^{-2/p}d_{1,A}\max_{i,j}|a_{ij}|^2
\\
&\leq 
\gamma^{3}k^{1-2/q^*}l^{-2/p}d_{1,A}\max_{i,j}|a_{ij}|^2.
\end{align*}
Therefore
\begin{equation}
\label{eq:estsigmakl}
\sigma_{k,l}
\leq \min\{1,\gamma k^{1/2-1/q^*}l^{1/2-1/p},\gamma^{3/2}k^{1/2-1/q^*}l^{-1/p}d_{1,A}^{1/2}\}
\max_{i,j}|a_{ij}|.
\end{equation}

Estimates \eqref{eq:estYkla} and \eqref{eq:estsigmakl} yield
\begin{align*}
\Ex Y_{k,l}(\gamma)
&\lesssim \sqrt{p^*}D_1
+\sqrt{\Log m}\max_{i,j}|a_{ij}|
\\
&\qquad +\gamma^{3/2} k^{1-1/q^*}l^{-1/p}\min\{l^{1/2},d_{1,A}^{1/2}\}
\sqrt{\Log(d_A)}\max_{i,j}|a_{ij}|.
\end{align*}

In a similar way we show that
\begin{align*}
\Ex Y_{k,l}(\gamma)
&\lesssim \sqrt{q}D_2
+\sqrt{\Log n}\max_{i,j}|a_{ij}|
\\
&\qquad +\gamma^{3/2} k^{-1/q^*}l^{1-1/p}\min\{k^{1/2},d_{2,A}^{1/2}\}
\sqrt{\Log(d_A)}\max_{i,j}|a_{ij}|.
\qedhere
\end{align*}
\end{proof}

To make use of the two previous propositions we consider the cases  
$\frac{1}{p}+\frac{1}{q^*}\geq 3/2$ and  $\frac{1}{p}+\frac{1}{q^*}\leq 3/2$ separately.

\begin{cor}
\label{cor:uptodAepsiloncase1}
Suppose that $p^*,q\in [2,\infty)$ are such that $\frac{1}{p}+\frac{1}{q^*}\geq 3/2$. Then for every $\ve\in (0,1]$,
\begin{align*}
\Ex\|G_A\|_{p\to q}
\lesssim \ve^{-1}\Bigl( \sqrt{p^*}D_1+\sqrt{q}D_2
+\bigl(\ve^{-1/2}d_A^\ve+\sqrt{\Log(mn)}\bigr)\max_{i,j}|a_{ij}|\Bigr).
\end{align*}
\end{cor}

\begin{proof}
By Proposition \ref{prop:redtosupflat} we have
\[
\Ex \|G_A\|_{p\to q}
\lesssim \frac{1}{\ve}\Bigl(\max_{1\leq k\leq m}\max_{1\leq l\leq n}\Ex Y_{k,l}(d_A^{\ve/3})
+\sqrt{\Log (mn)}\, \|(a_{ij})\|_\infty\Bigr).
\]

Observe that in the case $p^*,q\in [2,\infty)$ and $\frac{1}{p}+\frac{1}{q^*}\geq 3/2$ we have for any $k,l$,
\begin{align*}
 k^{-1/q^*}l^{-1/p}\min\{kl^{1/2},k^{1/2}l,kd_{1,A}^{1/2},ld_{2,A}^{1/2}\}
 &\leq \min\{k^{1-1/q^*}l^{1/2-1/p},k^{1/2-1/q^*}l^{1-1/p}\}
 \\
 &\leq (k\wedge l)^{3/2-1/q^*-1/p}\leq 1.
\end{align*}
Hence, estimate \eqref{eq:est2Yklall} yields that 
\begin{align*}
\Ex Y_{k,l}(d_A^{\ve/3})
\lesssim \sqrt{p^*}D_1+\sqrt{q}D_2
+\bigl(d_A^{\ve/2}\sqrt{\Log d_A}+\sqrt{\Log(mn)}\bigr)\max_{i,j}|a_{ij}|.
\end{align*}
Finally we observe that for $\ve\in(0,1)$,
\begin{equation}
\label{eq:sup-eps}
\sup_{x\geq 1}x^{-\ve}  \Log x=\frac 1{1\wedge(e\ve)} \le \frac 1\ve,
\end{equation} 
so $d_A^{\ve/2}\sqrt{\Log d_A}\le d_A^{\ve}\ve^{-1/2}$.
\end{proof}

\begin{cor}
\label{cor:uptodAepsiloncase4}
If $p^*,q\in [2,\infty)$ are such that $\frac{1}{p}+\frac{1}{q^*}\le\frac32$ and 
$q\vee p^*>2$, then 
\begin{align*}
&\Ex\|G_A\|_{p\to q}\lesssim \frac{(q\vee p^*)^3}{q\vee p^* -2}\biggl[
\sqrt{p^*}D_1+\sqrt{q}D_2
\\
&\qquad+\Bigl(\frac{(q\vee p^*)^{3/2}}{(q\vee p^* -2)^{1/2}}(d_{1,A}^{\frac3{2p^*}
-\frac q{2(p^*+q)}} + d_{2,A}^{\frac3{2q} -\frac{p^*}{2(p^*+q)} } )
+\sqrt{\Log(mn)}\Bigr)\max_{i,j}|a_{ij}|\biggr].
\end{align*}
\end{cor}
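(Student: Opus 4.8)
The plan is to reduce, via Propositions~\ref{prop:redtosupflat} and~\ref{prop:est2Ykl}, to a purely arithmetic estimate for a maximum of a minimum of four monomials in $k$ and $l$. First I would apply estimate~\eqref{eq:redtosupflat2} of Proposition~\ref{prop:redtosupflat} with $\ve$ there replaced by $\ve/3$, where $\ve=\ve(p,q)\in(0,1]$ is a small parameter to be fixed at the very end, and then insert the bound of Proposition~\ref{prop:est2Ykl} with $\gamma=d_A^{\ve/3}$, so that $\gamma^{3/2}=d_A^{\ve/2}$. Writing $d_A^{\ve/2}\sqrt{\Log d_A}=d_A^{\ve}\bigl(d_A^{-\ve/2}\sqrt{\Log d_A}\bigr)$ and using $\sup_{x\ge1}x^{-\ve/2}\sqrt{\Log x}\lesssim\ve^{-1/2}$ (as in the proof of Corollary~\ref{cor:uptodAepsiloncase1}), one is reduced to
\[
\Ex\|G_A\|_{p\to q}\lesssim\frac1\ve\Bigl(\sqrt{p^*}D_1+\sqrt{q}\,D_2+\bigl(\sqrt{\Log(mn)}+\ve^{-1/2}d_A^{\ve}\,\Phi\bigr)\max_{i,j}|a_{ij}|\Bigr),
\]
where
\[
\Phi:=\max_{1\le k\le m}\max_{1\le l\le n}k^{-1/q^*}l^{-1/p}\min\bigl\{kl^{1/2},\ k^{1/2}l,\ kd_{1,A}^{1/2},\ ld_{2,A}^{1/2}\bigr\}.
\]

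The core of the argument is the estimate of $\Phi$. Passing to logarithmic variables $x=\ln k\ge0$, $y=\ln l\ge0$, the expression under the maxima equals $e^{h(x,y)}$ with
\[
h(x,y)=-\tfrac1{q^*}x-\tfrac1py+\min\Bigl\{x+\tfrac12y,\ \tfrac12x+y,\ x+\tfrac12\ln d_{1,A},\ y+\tfrac12\ln d_{2,A}\Bigr\},
\]
which is a concave, piecewise affine function; hence $\ln\Phi\le\max_{x,y\ge0}h(x,y)$ is controlled by the value of a small linear program, whose optimum is attained where the four affine pieces are balanced, near $k\approx d_{2,A}$ and $l\approx d_{1,A}$. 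Equivalently, the matching upper bound for $\Phi$ can be produced by applying a weighted AM--GM inequality to the four-term minimum, with the weights chosen so that the resulting powers of $k$ and $l$ have nonpositive exponents (hence are bounded by $1$); using the standing hypotheses in the form $\tfrac12\le\tfrac1{p^*}+\tfrac1q\le1$ (the first is equivalent to $\tfrac1p+\tfrac1{q^*}\le\tfrac32$, the second is automatic from $p^*,q\ge2$), together with $q\vee p^*>2$ and $d_{1,A},d_{2,A}\ge1$ (which lets one trade a power of the smaller of $d_{1,A},d_{2,A}$ for the same power of the larger), one arrives at $\Phi\lesssim_{p,q}d_{1,A}^{a'}+d_{2,A}^{b'}$ with $a'<a:=\tfrac3{2p^*}-\tfrac q{2(p^*+q)}$ and $b'<b:=\tfrac3{2q}-\tfrac{p^*}{2(p^*+q)}$, the gaps $a-a'$ and $b-b'$ being bounded below by a positive quantity comparable (up to a fixed power of $q\vee p^*$) to $q\vee p^*-2$; one checks along the way that $a<\tfrac1{p^*}$ and $b<\tfrac1q$, which will be needed when Proposition~\ref{prop:fromuptopowerdA} is later applied.

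It remains to assemble the pieces. Since $d_A=d_{1,A}\vee d_{2,A}$ and $d_{1,A},d_{2,A}\ge1$, the bound for $\Phi$ gives $d_A^{\ve}\Phi\lesssim_{p,q}d_{1,A}^{a'+\ve}+d_{2,A}^{b'+\ve}$, after splitting according to which of $d_{1,A}$, $d_{2,A}$ is the larger and estimating the cross terms; choosing $\ve$ of the indicated order (small enough that $a'+\ve\le a$ and $b'+\ve\le b$, and $\ve\le1$) collapses this to $\lesssim_{p,q}d_{1,A}^{a}+d_{2,A}^{b}$, while the $\ve^{-1}$ and $\ve^{-1/2}$ factors, combined with $\sqrt{p^*},\sqrt{q}\le\sqrt{q\vee p^*}$, produce the asserted powers of $q\vee p^*$ and of $q\vee p^*-2$. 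I expect the middle step to be the main obstacle: the four-term minimum forces a case distinction by which term is active in each regime of $(d_{1,A},d_{2,A},k,l)$, and one has to verify that the exponents thus produced are dominated by $a$ and $b$ with a quantitatively positive gap; it is exactly the degeneration of this gap as $q\vee p^*\to2$ that forces $\ve$ to be small and hence makes the constant blow up. The remaining steps are bookkeeping built on results already established.
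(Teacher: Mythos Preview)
Your plan matches the paper's architecture: reduce via Propositions~\ref{prop:redtosupflat} and~\ref{prop:est2Ykl} with $\ve\to\ve/3$, control $\Phi=\max_{k,l}\pi_{k,l}$, absorb the factor $d_A^\ve$, and track the constants. Your LP/weighted-AM--GM description of the $\Phi$-bound is a valid repackaging of the paper's explicit two-case split $k\le\rho l$ versus $k\ge\rho l$ with optimised $\rho$; both routes yield the product bound $\Phi\le d_{1,A}^{c_1}d_{2,A}^{c_2}$ with $c_1=\tfrac1{p^*}-\tfrac q{2(p^*+q)}$, $c_2=\tfrac1q-\tfrac{p^*}{2(p^*+q)}$, whence $\Phi\le\tfrac12(d_{1,A}^{a'}+d_{2,A}^{b'})$ with $a'=2c_1$, $b'=2c_2$.

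There is, however, a real slip in the absorption step. The intermediate claim $d_A^\ve\Phi\lesssim_{p,q}d_{1,A}^{a'+\ve}+d_{2,A}^{b'+\ve}$, obtained by ``splitting according to which is larger'', is false. When $p^*\le q$ and $\tfrac1{p^*}+\tfrac1q>\tfrac12$ one has $a'>b'$, and the choice $d_{2,A}=N$, $d_{1,A}=N^\theta$ with $\theta\in(b'/a',1)$ makes the ratio of the two sides grow like $N^{\ve(1-\theta)}$; indeed, $d_{2,A}^\ve d_{1,A}^{a'}\lesssim d_{1,A}^{a'+\ve}+d_{2,A}^{b'+\ve}$ would, by AM--GM, force $a'\le b'$. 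The paper avoids this by not passing to the sum $d_{1,A}^{a'}+d_{2,A}^{b'}$ before absorbing $d_A^\ve$: it keeps the \emph{product} $d_{1,A}^{c_1}d_{2,A}^{c_2}$, multiplies by $d_A^\ve$, and then applies a weighted AM--GM to the two cross-terms $d_{1,A}^{a'}d_{2,A}^\ve$ and $d_{1,A}^\ve d_{2,A}^{b'}$ that exploits the identity $(p^*/q)a'=b'$ together with the specific choice $\ve=(p^*/q)\ve_0$, $\ve_0=\min(a-a',b-b')$. Equivalently, one can check directly from the product form that $d_A^\ve d_{1,A}^{c_1}d_{2,A}^{c_2}\lesssim d_{1,A}^a+d_{2,A}^b$ as soon as $\ve\le\ve_0$; the point is that the symmetric AM--GM converting the product into a sum must come \emph{after} $d_A^\ve$ is absorbed, not before. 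With this correction your outline goes through, and the constant bookkeeping is as you describe.
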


\begin{proof}
We proceed as in the  proof of the previous  corollary.  The only difference is 
a more delicate estimate of
\[
\pi_{k,l}\coloneqq k^{-1/q^*}l^{-1/p}\min\{kl^{1/2},k^{1/2}l,kd_{1,A}^{1/2},ld_{2,A}^{1/2}\}.
\]
Suppose $2\le p^*\le q$ and $q>2$ (the case when $2\le q\le p^*$ and $p^*>2$ follows by duality).
Let $\rho>0$ be a constant to be chosen later. 
We consider two cases. 

\textbf{Case 1.} $k\leq \rho l$. Assumption $\frac{1}{p}+\frac{1}{q^*}\le \frac 32$ implies that $\frac12+\frac1{p^*}-\frac1{q^*}\ge 0$, and so if $k\le \rho d_{1,A}$, then
\begin{align*}
\pi_{k,l}
&\leq k^{1/p^*-1/q^*}\min\{k^{1/2} (k/l)^{1/2-1/p^*} ,d_{1,A}^{1/2} (k/l)^{1/p}\}
\\ &\leq k^{1/p^*-1/q^*} \rho^{1/2-1/p^*} \bigl(\min\{k,\rho d_{1,A}\}\bigr)^{1/2}
 = k^{1/2+1/p^*-1/q^*} \rho^{1/2-1/p^*} 
\\&\leq (\rho d_{1,A})^{1/p^*-1/q^*+1/2}\rho^{1/2-1/p^*} = \rho^{1/q} d_{1,A}^{1/p^*+1/q-1/2}.
\end{align*}
Moreover, since $1/p^*\le 1/2\le 1/q^*$, we have $1/p^*-1/q^*\le 0$, and so if $k\ge \rho d_{1,A}$, then
\begin{align*}
\pi_{k,l}
&\leq k^{1/p^*-1/q^*}\min\{k^{1/2} (k/l)^{1/2-1/p^*} ,d_{1,A}^{1/2} (k/l)^{1/p}\}
\\ &\leq k^{1/p^*-1/q^*} \rho^{1/2-1/p^*} \bigl(\min\{k,\rho d_{1,A}\}\bigr)^{1/2}
 = k^{1/p^*-1/q^*} \rho^{1/2-1/p^*}  (\rho d_{1,A})^{1/2}
\\& \le (\rho d_{1,A})^{1/p^*-1/q^*}\rho^{1/2-1/p^*}  (\rho d_{1,A})^{1/2}
= \rho^{1/q} d_{1,A}^{1/p^*+1/q-1/2}.
\end{align*}
Therefore, in both cases $\pi_{k,l}\le \rho^{1/q} d_{1,A}^{1/p^*+1/q-1/2}$.

\textbf{Case 2.} $k\geq \rho l$. Then 
\begin{align*}
\pi_{k,l}
&\leq  k^{1/p^*-1/q^*}\bigl(\min\{k,d_{2,A}\}\bigr)^{1/2} (l/k)^{1/p^*}
\leq \rho^{-1/p^*}  k^{1/p^*-1/q^*}\bigl(\min\{k,d_{2,A}\}\bigr)^{1/2}
\\
&\leq  \rho^{-1/p^*}d_{2,A}^{1/p^*-1/q^*+1/2}=\rho^{-1/p^*}d_{2,A}^{1/p^*+1/q-1/2}.
\end{align*}

Choosing $\rho\coloneqq(d_{2,A}/d_{1,A})^{1-p^*q/(2(p^*+q))}$ we obtain in both cases
\[
\pi_{k,l}\leq d_{1,A}^{1/p^*-q/(2(p^*+q))}d_{2,A}^{1/q-p^*/(2(p^*+q))}
\leq 
\frac{1}{2}\Bigl( d_{1,A}^{2/p^*-q/(p^*+q)}+d_{2,A}^{2/q-p^*/(p^*+q)}\Bigr).
\]
Therefore, Propositions \ref{prop:redtosupflat} and \ref{prop:est2Ykl} (applied to $\ve
\coloneqq\ve/3$ and $\gamma\coloneqq d_A^{\ve/3}$) yield  for every $\ve \in (0,1)$,
\begin{align*}
\Ex\|G_A\|_{p\to q}\lesssim \frac{1}{\ve}\Bigl(
& \sqrt{p^*}D_1+\sqrt{q}D_2
\\
&+\Bigl(d_{A}^{\frac{\ve }2}\sqrt{\Log d_A}\Bigl(  d_{1,A}^{\frac2{p^*}-\frac q{p^*+q}}+d_{2,A}^{\frac 2q-\frac{p^*}{p^*+q}}\Bigr)+\sqrt{\Log(mn)}\Bigr)\max_{i,j}|a_{ij}|\Bigr).
\end{align*}

Recall that estimate \eqref{eq:sup-eps} implies
\[
d_{A}^{\ve /2}\sqrt{ \Log d_A}\le \ve^{-1/2}d_A^{\ve},
\]
and $d_A=d_{1,A}\vee d_{2,A}$.
Note that $\frac2{p^*}-\frac q{p^*+q} < \frac1{p^*}$ and $\frac2q-\frac{p^*}{p^*+q} < \frac1q$, 
so we may take
\[
\ve_0\coloneqq \frac12 \min\Bigl\{\frac{q}{p^*+q}-\frac{1}{p^*} \, , \, \frac{p^*}{p^*+q}-\frac{1}q
\Bigr\} \quad \text{and}\quad \ve\coloneqq \frac{p^*}q \ve_0.
\]
For such a choice of $\ve$ we have
$\frac2{p^*}-\frac q{p^*+q} +\ve \le \frac2{p^*}-\frac q{p^*+q} +\ve_0 
\le  \frac{3}{2p^*} -\frac q{2(p^*+q)} < \frac1{p^*}$ and
$\frac2q-\frac{p^*}{p^*+q} +\ve \le \frac2q-\frac{p^*}{p^*+q} +\ve_0 
\le  \frac{3}{2q} -\frac{ p^*}{2(p^*+q)} <\frac1q$. 
Moreover, the assumption $1/p+1/q^*\leq 3/2$ yields that $1/p^*+1/q\geq 1/2$ and 
thus $2/p^*-q/(p^*+q)\geq 0$ and $2/q-p^*/(p^*+q)\geq 0$. 
Hence, the AM-GM inequality implies  
\begin{align*}
d_{1,A}^{\frac2{p^*}-\frac{q}{p^*+q}}d_{2,A}^{\ve} 
& \le \frac{\frac2{p^*}-\frac{q}{p^*+q} }{\frac2{p^*}-\frac{q}{p^*+q} +\ve_0} 
d_{1,A}^{\frac2{p^*}-\frac{q}{p^*+q} +\ve_0} 
+\frac{\ve_0 }{\frac2{p^*}-\frac{q}{p^*+q} +\ve_0}
d_{2,A}^{ \frac{\ve}{\ve_0} (\frac2{p^*}-\frac{q}{p^*+q}+\ve_0 )}
\\
&\le 
d_{1,A}^{\frac2{p^*}-\frac{q}{p^*+q} +\ve_0}+d_{2,A}^{\frac2q-\frac{p^*}{p^*+q} +\ve_0}
\end{align*}
and
\begin{align*}
d_{1,A}^{\ve}d_{2,A}^{\frac2q-\frac{p^*}{p^*+q}} 
& \le \frac{\ve_0 }{\frac2q-\frac{p^*}{p^*+q} +\ve_0} 
d_{1,A}^{\frac{\ve}{\ve_0} (\frac2q-\frac{p^*}{p^*+q}+\ve_0)} 
+\frac{\frac2q-\frac{p^*}{p^*+q} }{\frac2q-\frac{p^*}{p^*+q} +\ve_0}
d_{2,A}^{\frac2q-\frac{p^*}{p^*+q} +\ve_0}
\\ 
&\le 
d_{1,A}^{\frac{\ve}{\ve_0} (\frac2q-\frac{p^*}{p^*+q}) +\ve}+
d_{2,A}^{\frac2q-\frac{p^*}{p^*+q} +\ve_0} 
\le 
d_{1,A}^{\frac{\ve_0}{\ve} (\frac2q-\frac{p^*}{p^*+q}) +\ve}+
d_{2,A}^{\frac2q-\frac{p^*}{p^*+q} +\ve_0} 
\\
& \le 
d_{1,A}^{\frac2{p^*}-\frac{q}{p^*+q}+\ve_0}+d_{2,A}^{\frac2q-\frac{p^*}{p^*+q} +\ve_0}.
\end{align*}

To finish the proof it suffices to note that 
$\ve^{-1}\lesssim \frac{q^3}{(p^*-1)(q-1)-1} \le \frac{q^3}{q-2}$.
\end{proof}

 Now we are ready to prove  Proposition~\ref{prop:dimdependent} in  the range 
$(p,q)\notin [2,3]^2$. The case 
$(p,q)\in [2,3]^2$ is considered in Section~\ref{sect:appendix}.

\begin{proof}[{{Proof of Proposition~\ref{prop:dimdependent} in the case 
$(p^*,q)\notin [2,3]^2$}}]
The assertion follows from
\begin{itemize}
\item Corollary~\ref{cor:uptodAepsiloncase1}, applied with $\ve=(2(p^*\vee q))^{-1}$, and
Propositions~\ref{prop:worseD3'-intro} and \ref{prop:fromuptopowerdA}
if  $1/p+1/q^*\geq 3/2$.

\item Corollary~\ref{cor:uptodAepsiloncase4} and
Propositions~\ref{prop:worseD3'-intro} and \ref{prop:fromuptopowerdA}
if $1/p+1/q^*< 3/2$.
\end{itemize}

To get the claimed bounds on $\alpha(p,q)$ we observe that in the case $1/p+1/q^* < 3/2$
we have $1>\frac{3}{2}-\frac{qp^*}{2(p^*+q)}\geq 1/2$, so that
\[
-\ln\Bigl(\frac{3}{2}-\frac{qp^*}{2(p^*+q)}\Bigr)\sim \frac{qp^*}{2(p^*+q)}-\frac{1}{2}
\gtrsim \frac{p^*\vee q-2}{p^*\vee q}. \qedhere
\]
\end{proof}

%%%%%%%%%%%%%%%%%%
%%%%%%%%%%%%%%%%%%%
\section{Proposition~\ref{prop:dimdependent} implies Proposition~\ref{prop:weakerD3-allpq}} 	
\label{sect:decomposition}

To prove Proposition~\ref{prop:weakerD3-allpq} we decompose the underlying matrix $A$ into 
block  diagonal matrices $A_k$  (with blocks of appropriately small size) 
and matrices $B_l$ whose norm  may be controlled by the following proposition providing 
a crude, but dimension-independent bound.

\begin{prop}
\label{prop:sqrtIk}
 Let $p^*,q\in [2,\infty)$. Then for every partition $J_1,J_2,\ldots,J_{k_0}$ of $[n]$,
\begin{equation}
\label{eq:sqrtJk}
\Ex\|G_A\|_{p\to q}\leq \Ex\|G_A\|_{2\to q} \lesssim \sqrt{q}D_2
+\max_{1\le k \le k_0}\max_{1\le l\le k}k^{3}|J_l|^{1/2}\max_{i\leq m,j\in J_k}|a_{ij}|.
\end{equation}
Similarily, for every partition $I_1,\ldots,I_{k_0}$ of $[m]$,
\begin{equation}
\label{eq:sqrtIk}
\Ex\|G_A\|_{p\to q}\leq \Ex\|G_A\|_{p\to 2} \lesssim \sqrt{p^*}D_1
+\max_{1\le k \le k_0}\max_{1\le l\le k}k^{3}|I_l|^{1/2}\max_{i\in I_k,j\leq n}|a_{ij}|.
\end{equation}
\end{prop}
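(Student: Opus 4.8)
The plan is to establish \eqref{eq:sqrtJk} and then deduce \eqref{eq:sqrtIk} from it by transposition: since $\|G_A\|_{p\to q}=\|G_A^{T}\|_{q^*\to p^*}$ and $G_A^{T}$ has the same distribution as $G_{A^{T}}$, applying \eqref{eq:sqrtJk} to $A^{T}$ (with $p,q$ replaced by $q^*,p^*$ and the partition of $[n]$ replaced by the given partition $I_1,\ldots,I_{k_0}$ of $[m]$) yields \eqref{eq:sqrtIk}. So I focus on \eqref{eq:sqrtJk}. Its first inequality is immediate: $p\le 2$ forces $B_p^n\subset B_2^n$, hence $\|G_A\|_{p\to q}\le\|G_A\|_{2\to q}$, and it remains to bound $\Ex\|G_A\|_{2\to q}$.

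The main building block is a dimension-free estimate for a matrix supported on one column-block. For $J\subset[n]$ let $A_J$ denote $A$ with all columns outside $J$ replaced by $0$, and put $c_J:=\max_{i\le m,\,j\in J}|a_{ij}|$. First I would show
\[
\Ex\|G_{A_J}\|_{2\to q}\ \lesssim\ \sqrt q\,D_2+|J|^{1/2}c_J,\qquad
\bigl(\Ex\|G_{A_J}\|_{2\to q}^{2}\bigr)^{1/2}\ \lesssim\ \sqrt q\,D_2+|J|^{1/2}c_J .
\]
For the first bound take a $\tfrac12$-net $T$ of $B_2^{J}$ with $|T|\le 5^{|J|}$, so that $\|G_{A_J}\|_{2\to q}\le 2\sup_{t\in T}\sup_{s\in B_{q^*}^{m}}\sum_{i,j}a_{ij}g_{ij}s_it_j$, and apply \eqref{eq:supgauss} to this Gaussian process on $T\times B_{q^*}^{m}$: for each fixed $t$, estimate \eqref{eq:fixedt} gives $\Ex\sup_{s}\sum_{ij}a_{ij}g_{ij}s_it_j\le\sqrt q\,D_2$; the supremum of $(\Var)^{1/2}=\bigl(\sum_{i,\,j\in J}a_{ij}^{2}s_i^{2}t_j^{2}\bigr)^{1/2}$ over $T\times B_{q^*}^m$ is $\le c_J$, since $B_{q^*}^m\subset B_2^m$ and $T\subset B_2^J$; and $\log|T|\lesssim|J|$. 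Gaussian concentration (the Lipschitz constant being again $\le c_J$) then promotes this to the $L^{2}$-bound.

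Now write $A=\sum_{k\le k_0}A^{(k)}$ with $A^{(k)}:=A_{J_k}$, set $X_k:=\|G_{A^{(k)}}\|_{2\to q}$ and $c_k:=c_{J_k}$. For $t\in B_2^n$, with $t_k:=(t_j\ind_{j\in J_k})_{j\le n}$, the triangle inequality in $\ell_q$ gives $\|G_At\|_q\le\sum_k\|G_{A^{(k)}}t_k\|_q\le\sum_k\|t_k\|_2X_k$, hence $\|G_A\|_{2\to q}\le\bigl(\sum_k X_k^{2}\bigr)^{1/2}$. Taking expectations of this $\ell_2$-sum, the error contributions are harmless: since by construction $R:=\max_{1\le k\le k_0}\max_{1\le l\le k}k^{3}|J_l|^{1/2}c_k$ satisfies $R\ge k^{3}|J_k|^{1/2}c_k\ge k^{3}c_k$, one gets $\sum_k\bigl(|J_k|^{1/2}c_k\bigr)^{2}\le R^{2}\sum_k k^{-6}\lesssim R^{2}$ and likewise $\sum_k c_k^{2}\lesssim R^{2}$ — this is exactly what the cubic weight in $R$ is for, making $\sum_k k^{-6}$ summable — so the per-block ``small'' terms and the fluctuations $(X_k-\Ex X_k)_+$ are controlled in $\ell_2$ by $\lesssim R$. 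The delicate point, which I expect to be the main obstacle, is the ``main'' summand $\sqrt q\,D_2$: it appears in each of the $k_0$ per-block estimates, and handled naively in the $\ell_2$-sum it would blow up to $\sqrt{k_0}\,\sqrt q\,D_2$. The resolution must avoid summing the block bounds blindly; instead one invokes Lemma~\ref{lem:supgauss} once more, peeling off the blocks one at a time so that the summand $\sqrt q\,D_2$ is produced only once, as the $\max_k\Ex(\cdot)$ part of \eqref{eq:supgauss}, while block $k$ contributes only its variance term $\lesssim\sqrt{\Log k}\,c_k\le k^{3}c_k\le R$ together with the already-tamed size term $|J_k|^{1/2}c_k\le(\max_{l\le k}|J_l|)^{1/2}c_k$ — this is precisely where the powers of $k$ and the factor $\max_{l\le k}|J_l|^{1/2}$ in $R$ enter. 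Combining everything gives $\Ex\|G_A\|_{2\to q}\lesssim\sqrt q\,D_2+R$, which is \eqref{eq:sqrtJk}.
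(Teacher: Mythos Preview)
Your per-block estimate and the Cauchy--Schwarz bound $\|G_A\|_{2\to q}\le(\sum_k X_k^2)^{1/2}$ are correct, but the last step --- where you hope to avoid the factor $\sqrt{k_0}$ in front of $\sqrt q\,D_2$ --- is a genuine gap, not just an omitted detail. Once you pass to $(\sum_k X_k^2)^{1/2}$ with $X_k=\|G_{A^{(k)}}\|_{2\to q}$, you have decoupled the blocks: each $X_k$ is attained at its own maximiser $t^{(k)}\in B_2^{J_k}$, so in effect you are allowing $\sum_k\|t^{(k)}\|_2^2=k_0$ instead of $1$. This is exactly what produces the $\sqrt{k_0}$ in $(\sum_k(\Ex X_k)^2)^{1/2}$, and no post-hoc application of Lemma~\ref{lem:supgauss} can undo it: that lemma bounds a supremum over a \emph{union} $\bigcup_i T_i$ by $\max_i\Ex\sup_{T_i}+$ a variance term, whereas $(\sum_k X_k^2)^{1/2}$ is not a supremum over a union of index sets in any way that singles out a unique $\sqrt q\,D_2$. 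Your phrase ``peel off the blocks one at a time'' is not a concrete argument, and I do not see how to implement it starting from the $(\sum_k X_k^2)^{1/2}$ bound.

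The paper avoids this loss by never decoupling the blocks. It builds a $\tfrac12$-net $T_1\subset B_2^n$ as a product of nets $T_{1,k}\subset B_2^{J_k}$ of mesh $2^{-2k}$ (so $|T_{1,k}|\le 2^{4k|J_k|}$) and proves by reverse induction on $k$ that
\[
\Ex\sup_{t\in T_1}\|G_At\|_q\le
\Ex\sup_{t\in T_1}\Bigl(\Bigl\|\Bigl(\sum_{j\in J_{\le k}}a_{ij}g_{ij}t_j\Bigr)_i\Bigr\|_q^2
+qD_2^2\sum_{j\in J_{>k}}t_j^2\Bigr)^{1/2}
+C\sum_{l>k}\sqrt{\Log|T_{1,\le l}|}\,c_l.
\]
Each induction step first freezes the net $T_{1,\le k}$ via a conditional use of Lemma~\ref{lem:supgauss} (this costs the variance term $\sqrt{\Log|T_{1,\le k}|}\,c_k$), and then integrates out the Gaussians in $J_k$ by Gaussian hypercontractivity (Lemma~\ref{lem:hypercontr}), which turns $\|b+\sum_{j\in J_k}a_{\cdot j}g_{\cdot j}t_j\|_q^2$ into $\|b\|_q^2+qD_2^2\sum_{j\in J_k}t_j^2$. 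The point is that the $D_2$ contribution \emph{stays inside the square root} and accumulates as $qD_2^2\sum_{j\in J_{>k}}t_j^2\le qD_2^2$ because $t\in B_2^n$; at $k=0$ one therefore gets a single $\sqrt q\,D_2$. The remaining sum $\sum_k\sqrt{\Log|T_{1,\le k}|}\,c_k\lesssim\sum_{1\le l\le k\le k_0}\sqrt{l}\,|J_l|^{1/2}c_k$ is what produces the factor $k^3\max_{l\le k}|J_l|^{1/2}$. Your Cauchy--Schwarz route discards precisely the constraint $\sum_j t_j^2\le1$ that makes this accumulation bounded.
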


In order to show Proposition \ref{prop:sqrtIk} we need estimate~\eqref{eq:intnormprop-q-norm} 
and the following  lemma; they allow us to perform the induction step.

\begin{lem}
\label{lem:supgaussJ}
For every $q\geq 2$,  $J\subset J'\subset [n]$, a finite set $T\subset B_2^{J'}$, and  functions 
$c= (c_1,\ldots,c_m)\colon T\to \er^m$ and $d\colon T\to [0,\infty)$ we have
\begin{align*}
\Ex &\sup_{t\in T}\Bigl(\Bigl(\sum_{i\leq m}\Bigl|c_i(t)+\sum_{j\in J}a_{ij}g_{ij}t_j\Bigr|^q\Bigl)^{2/q}
+d(t)\Bigr)^{1/2}
\\
&\leq \sup_{t\in T}\Ex\Bigl(\Bigl(\sum_{i\leq m}
\Bigl|c_i(t)+\sum_{j\in J}a_{ij}g_{ij}t_j\Bigr|^q\Bigl)^{2/q}+d(t)\Bigr)^{1/2}
+C\sqrt{\Log|T|}\max_{i\leq m,j\in J}|a_{ij}|.
\end{align*}
\end{lem}

\begin{proof}
Observe that
\[
\Bigl(\Bigl(\sum_{i\leq m}\Bigl|c_i(t)+\sum_{j\in J}a_{ij}g_{ij}t_j\Bigr|^q\Bigl)^{2/q}
+d(t)\Bigr)^{1/2}
=\sup_{s\in B_{q^*}^m,v\in B_2^2}X_{s,v,t},
\]
where
\[
X_{s,v,t}\coloneqq
v_1\sum_{i\leq m}s_i\Bigl(c_i(t)+\sum_{j\in J}a_{ij}g_{ij}t_j\Bigr)
+v_{2} \sqrt{d(t)}.
\]
We have
\begin{align*}
\sup_{t\in T}\sup_{s\in B_{q^*}^m,v\in B_2^2}\Var(X_{s,v,t})
&=\sup_{t\in T}\sup_{s\in B_{q^*}^m,v\in B_2^2}v_{1}^2\sum_{i\leq m,j\in J}a_{ij}^2s_i^2t_j^2
\\
&\leq \sup_{t\in B_2^J}\sup_{s\in B_{q^*}^m}\sum_{i\leq m,j\in J}a_{ij}^2s_i^2t_j^2
=\max_{i\leq m,j\in J}|a_{ij}|^2.
\end{align*}
Hence  the assertion follows from Lemma \ref{lem:supgauss}.
\end{proof}

\begin{proof}[Proof of Proposition \ref{prop:sqrtIk}] 
We will prove \eqref{eq:sqrtJk}; the second estimate \eqref{eq:sqrtIk} follows by duality.

 For $1\leq k\leq k_0$ let $T_{k}$ be a $2^{-2k}$-net in 
$B_{2}^{J_k}$ of cardinality at most $2^{4k|J_k|}$. 
Set 
\[
J_{\leq k}\coloneqq\bigcup_{l\leq k}J_l,\quad J_{> k}\coloneqq\bigcup_{l> k}J_l
\]
and for $t\in \er^n$,
\[
\pi_k(t)\coloneqq(t_j)_{j\in J_k}\in \er^{J_k},\quad
\pi_{\leq k}(t)\coloneqq(t_j)_{j\in J_{\leq k}}\in \er^{J_{\leq k}}.
\]
Define
\begin{align*}
T_{\leq k}&\coloneqq \{t\in B_2^{J_{\leq k}}\colon\  \pi_l(t)\in T_{l}, 1\leq l\leq k\},\quad
\\
T&\coloneqq T_{\leq k_0}=\{t\in B_2^{n}\colon\  \pi_k(t)\in T_{k}, 1\leq k\leq k_0\}.
\end{align*}
Then $T$ is a $\frac 12$-net in $B_2^n$, and hence,
\[
\Ex\|G_A\|_{p\to q} \leq 
\Ex\|G_A\|_{2\to q} \leq 
2\Ex\sup_{t\in T}\Bigl(\sum_{i}\Bigl|\sum_{j}a_{ij}g_{ij}t_j\Bigr|^q\Bigr)^{1/q}.
\]

We will show by the reverse induction on $k$ that for $0\leq k\leq k_0$,
\begin{align}
\notag
\Ex\sup_{t\in T}\Bigl(\sum_{i}\Bigl|\sum_{j}a_{ij}g_{ij}t_j\Bigr|^q\Bigr)^{1/q}
&\leq \Ex\sup_{t\in T}
\Bigl(\Bigl(\sum_{i}\Bigr|\sum_{j\in J_{\leq k}}a_{ij}g_{ij}t_j\Bigr|^{q}\Bigr)^{2/q}+qD_2^2\sum_{j\in J_{>k}}t_j^2\Bigr)^{1/2}
\\ \label{eq:revindk}
& \qquad+C\sum_{l=k+1}^{k_0}\sqrt{\Log|T_{\leq l}|}\max_{i}\max_{j\in J_{l}}|a_{ij}|.
\end{align}

For $k=k_0$  inequality \eqref{eq:revindk} holds with equality.
Observe that
\begin{align*}
\sup_{t\in T}
\Bigl(\Bigl(\sum_{i}\Bigr| & \sum_{j\in J_{\leq k}}a_{ij}g_{ij}t_j\Bigr|^{q}\Bigr)^{2/q}+qD_2^2\sum_{j\in J_{>k}}t_j^2\Bigr)^{1/2}
\\
&=
\sup_{\tilde{t}\in T_{\leq k}}
\Bigl(\Bigl(\sum_{i}\Bigr|\sum_{j\in J_{\leq k}}a_{ij}g_{ij}\tilde{t}_j\Bigr|^{q}\Bigr)^{2/q}
+d(\tilde{t})\Bigr)^{1/2},
\end{align*}
where
\[
d(\tilde{t})\coloneqq qD_2^2\sup_{t\in T\colon \pi_{\leq k}(t)=\tilde{t}}\sum_{j\in J_{>k}}t_j^2.
\]
Let $\Ex_{J_k}$ denote the integration with respect to random variables $(g_{ij})_{i\leq m,j\in J_k}$. Conditional application of Lemma \ref{lem:supgaussJ} yields
\begin{align*}
&\Ex_{J_k}\sup_{\tilde{t}\in T_{\leq k}}
\Bigl(\Bigl(\sum_{i}\Bigr|\sum_{j\in J_{\leq k}}a_{ij}g_{ij}\tilde{t}_j\Bigr|^{q}\Bigr)^{2/q}
+d(\tilde{t})\Bigr)^{1/2}
\\
&\leq \sup_{\tilde{t}\in T_{\leq k}}\Ex_{J_k}\Bigl(\Bigl(\sum_{i}\Bigr|\sum_{j\in J_{\leq k}}a_{ij}g_{ij}\tilde{t}_j\Bigr|^{q}\Bigr)^{2/q}+d(\tilde{t})\Bigr)^{1/2}
+C\sqrt{\Log|T_{\leq k}|}\max_{i}\max_{j\in J_k}|a_{ij}|.
\end{align*}

Inequality~\eqref{eq:intnormprop-q-norm}, applied conditionally, implies that 
for any $\tilde{t}\in T_{\leq k}$,
\begin{align*}
&\Ex_{J_k}\Bigl(\Bigl(\sum_{i}\Bigr|\sum_{j\in J_{\leq k}}a_{ij}g_{ij}\tilde{t}_j\Bigr|^{q}
\Bigr)^{2/q}+d(\tilde{t})\Bigr)^{1/2}
\\ & \qquad\qquad \le
\Bigl(\Bigl(\sum_{i}\Bigr|\sum_{j\in J_{\leq k-1}}a_{ij}g_{ij}\tilde{t}_j\Bigr|^{q}\Bigr)^{2/q}
+\tilde{d}(\tilde{t})\Bigr)^{1/2},
\end{align*}
where
\begin{align*}
\tilde{d}(\tilde{t})
=q\Bigl(\sum_{i}\Bigr|\sum_{j\in J_{k}}a_{ij}^2\tilde{t}_j^2\Bigr|^{q/2}\Bigr)^{2/q}
+d(\tilde{t}).
\end{align*}
Note that
\begin{align*}
\Bigl(\sum_{i}\Bigr|\sum_{j\in J_{k}}a_{ij}^2\tilde{t}_j^2\Bigr|^{q/2}\Bigr)^{2/q}
&\leq \sum_{j\in J_{k}}\tilde{t}_j^2\sup_{x\in B_1^{J_k}}\Bigl(\sum_{i}\Bigr|\sum_{j\in J_{k}}a_{ij}^2x_j\Bigr|^{q/2}\Bigr)^{2/q}
\\
&= \sum_{j\in J_{k}}\tilde{t}_j^2\max_{j\in J_k}\Bigl(\sum_{i}|a_{ij}|^q\Bigr)^{2/q}
 \leq D_2^2\sum_{j\in J_{k}}\tilde{t}_j^2,
\end{align*}
so
\begin{align*}
\tilde{d}(\tilde{t})
\le qD_2^2\sup_{t\in T\colon \pi_{\leq k}(t)=\tilde{t}}\sum_{j\in J_{> k-1}}t_j^2.
\end{align*}
Thus, 
\begin{align*}
&\Ex\sup_{t\in T}
\Bigl(\Bigl(\sum_{i}\Bigr|\sum_{j\in J_{\leq k}}a_{ij}g_{ij}t_j\Bigr|^{q}\Bigr)^{2/q}+qD_2^2\sum_{j\in J_{>k}}t_j^2\Bigr)^{1/2}
\\
&\leq \Ex\sup_{\tilde{t}\in T_{\leq k}}
\Bigl(\Bigl(\sum_{i}\Bigr|\sum_{j\in J_{\leq k-1}}a_{ij}g_{ij}\tilde{t}_j\Bigr|^{q}\Bigr)^{2/q}+
\tilde{d}(\tilde{t}) \Bigr)^{1/2}
+C\sqrt{\Log|T_{\leq k}|}\max_{i}\max_{j\in J_k}|a_{ij}|
\\
&\le\Ex\sup_{t\in T}
\Bigl(\Bigl(\sum_{i}\Bigr|\sum_{j\in J_{\leq k-1}}a_{ij}g_{ij}t_j\Bigr|^{q}\Bigr)^{2/q}+qD_2^2\sum_{j\in J_{>k-1}}t_j^2\Bigr)^{1/2}
\\
&\qquad+C\sqrt{\Log|T_{\leq k}|}\max_{i}\max_{j\in J_k}|a_{ij}|,
\end{align*}
so the reverse induction step immediately follows.

Estimate \eqref{eq:revindk} for $k=0$ gives
\[
\Ex\sup_{t\in T}\Bigl(\sum_{i}\Bigl|\sum_{j}a_{ij}g_{ij}t_j\Bigr|^q\Bigr)^{1/q}
\leq \sqrt{q}D_2+C\sum_{k=1}^{k_0}\sqrt{\Log|T_{\leq k}|}\max_{i}\max_{j\in J_{k}}|a_{ij}|.
\]
We have
\[
\Log|T_{\leq k}|\leq \sum_{l=1}^k\Log|T_{l}|\lesssim \sum_{l=1}^k l|J_l|,
\]
so that
\begin{align*}
\sum_{k=1}^{k_0}\sqrt{\Log|T_{\leq k}|}\max_{i}\max_{j\in J_{k}}|a_{ij}|
&\lesssim \sum_{1\leq l\leq k\leq k_0}\sqrt{l}|J_l|^{1/2}\max_{i}\max_{j\in J_{k}}|a_{ij}|
\\
&\leq \max_{1\leq l\leq k\leq k_0}k^3|J_l|^{1/2}\max_{i}\max_{j\in J_{k}}|a_{ij}|
\cdot\sum_{1\leq l\leq k\leq k_0}\frac{\sqrt{l}}{k^3}
\\
&\lesssim \max_{1\leq l\leq k\leq k_0}k^3|J_l|^{1/2}\max_{i}\max_{j\in J_{k}}|a_{ij}|.
\qedhere
\end{align*}
\end{proof}

Now we are ready to prove that Proposition~\ref{prop:weakerD3-allpq} follows from 
Proposition~\ref{prop:dimdependent}.

\begin{proof}[Proof of Proposition~\ref{prop:weakerD3-allpq}]
Let $r=8(p^*\vee q)$ and
\[
D_\infty'\coloneqq \max_{i,j} (i+j)^{1/r}|a_{ij}|.
\]
Define the sequence $(n_{k})_{k\geq 0}$ by
\[
n_0\coloneqq 0,\  n_k\coloneqq \bigl\lceil e^{r^k}\bigr\rceil  \quad\mbox{for }k\geq 1.
\]
Then
\begin{equation}
\label{eq:grownk}
 \sqrt{\Log(n_kn_l)}\leq \sqrt{\Log n_k}+\sqrt{\Log n_l}
\lesssim r  (n_{k-1}+n_{l-1}+2)^{1/r} \mbox{ for }k,l\geq 1.
\end{equation}

For $k=1,2,\ldots$ define
\begin{align*}
I_{k}&\coloneqq \{i\in [m]\colon n_{k-1}<i\leq n_{k}\}, \\
J_{k}&\coloneqq \{j\in [n]\colon n_{k-1}<j\leq n_{k}\}. 
\end{align*}
The definition of $D_\infty'$ implies that
\begin{equation}	
\label{eq:Dinfty-subsets}
\max_{i\in I_k, j\in J_l}(n_{k-1}+n_{l-1}+2)^{1/r}  |a_{ij}| 
\le \max_{i\in I_k, j\in J_l}  (i+j)^{1/r} |a_{ij}|
\le D_\infty'.
\end{equation}

Let
\[
A_{k,l}\coloneqq (a_{ij})_{i\in I_{k},j\in J_{l}}.
\]
Then  Proposition~\ref{prop:dimdependent} and estimates~\eqref{eq:grownk} and 
\eqref{eq:Dinfty-subsets} yield
\begin{align}	
\notag
\Ex \|G_{A_{k,l}}\|_{p\to q} &\lesssim  \alpha(p,q)\Bigl(\sqrt{p^*}D_1+\sqrt{q}D_2
+\sqrt{\Log(|I_k||J_l|)}\max_{i\in I_k,j\in J_l}|a_{ij}|\Bigr)
\\ \label{eq:single-block}
&  \lesssim  \alpha(p,q)(\sqrt{p^*}D_1+\sqrt{q}D_2+ rD_\infty').
\end{align}

For $u\in \zet$ set
\[
A_{u}=\sum_{ k>\max\{0,-u\}} (a_{ij}\ind_{\{i\in I_k,j\in J_{k+u}\}} )_{i\leq m,j\leq n}.
\]
For every  $u$ (after deleting some zero rows and columns) 
$A_u$ is block diagonal with blocks $A_{k,k+u}$.
Hence, Lemmas~\ref{lem:block-matr} and \ref{lem:supgauss}, 
together with estimates \eqref{eq:Dinfty-subsets} and \eqref{eq:single-block}, imply
\begin{align*}
\Ex \|G_{A_u}\|_{p\to q}
&=\Ex\max_{k}\|G_{A_{k,k+u}}\|_{p\to q}
\\
&\leq \max_{k}\Ex\|G_{A_{k,k+u}}\|_{p\to q}
+\max_k\sqrt{\Log k}\max_{i\in I_{k},j\in J_{k+u}}|a_{ij}|
\\
& \lesssim  \alpha(p,q)(\sqrt{p^*}D_1+\sqrt{q}D_2+  rD_{\infty}').
\end{align*}
We have
\[
A=\sum_{u=-1}^{1} A_u+B_1+B_2,
\]
where
\[
B_{1}=\sum_{l}\sum_{k\geq l+2}(a_{ij}\ind_{\{i\in I_k,j\in J_{l}\}}),
\qquad B_{2}=\sum_{k}\sum_{l\geq k+2}(a_{ij}\ind_{\{i\in I_k,j\in J_{l}\}}).
\]
Estimate \eqref{eq:sqrtJk} applied to the matrix $B_1$ yields
\begin{align*}
\Ex\|G_{B_1}\|_{p\to q}
&\lesssim \sqrt{q}D_2
+ \max_{l}\max_{l'\leq l}l^3\sqrt{ |J_{l'}|}\max_{k\geq l+2}\max_{i\in I_k}\max_{j\in J_l}|a_{ij}|
\\
&\leq \sqrt{q}D_2
+\max_{l}l^3\sqrt{n_l}\max_{k\geq l+2}\max_{i\in I_k}\max_{j\in J_l}|a_{ij}|.
\end{align*}
Observe that if $i\in I_{k}, j\in J_l$ and $k\geq l+2$ then 
\[
l^3\sqrt{n_l} \lesssim n_{l+1}^{1/r}\leq n_{k-1}^{1/r}\leq i^{1/r} \leq (i+j)^{1/r},
\]
Hence,
\[
\Ex\|G_{B_1}\|_{p\to q}  \lesssim \sqrt{q}D_2+D_\infty'.
\]
Similarily, estimate \eqref{eq:sqrtIk} implies
\[
\Ex\|G_{B_2}\|_{p\to q} \lesssim \sqrt{p}D_1+D_\infty'.
\qedhere
\]
\end{proof}

%%%%%%%%%%%%%%%%%%%%%%%%%%%%%%%%
%%%%%%%%%%%%%%%%%%%%%%%%%%%%%%%%

\section{Estimates in the range \texorpdfstring{$p^*,q\in [2,4)$}{p*,q in [2,4)}}
\label{sect:appendix}

%%%%%%%%%%%%%%%%%%%%%%%%%%%%%%%%

The aim of this section is to give  proofs of Propositions \ref{prop:dimdependent} 
and \ref{prop:worseD3'-intro} in the range $p^{*},q\in[2,3]$.
The arguments presented in this section work  in the whole regime $p^*,q\in [2,4)$, but they yield a constant which blows up when $p^*$ or $q$ approaches $4$.

The crucial new tool we need to establish Proposition~\ref{prop:dimdependent} in the case 
$p^*,q\in[2,3]$  is the following result, which allows to run the exponent reduction; 
this is a counterpart of Corollaries \ref{cor:uptodAepsiloncase1} and
\ref{cor:uptodAepsiloncase4} from Section~\ref{sect:degdepbounds}.

\begin{prop}
\label{prop:pqdA}
 If  $p^*,q\in [2, 4-\delta)$ and $\ve\in(0,1/2]$, then
\begin{equation*}
\Ex\|G_A\|_{p\to q}
 \lesssim \ve^{-2} \Bigl[ D_1+D_2 +
\Bigl(\Bigl(\ve^{-1/2}+\Bigl(\frac{C}{\delta}\Bigr)^{2/\delta} \Bigr)d_A^{1/4+\ve}
+\sqrt{\Log(mn)}\Bigr)\max_{i,j}|a_{ij}| \Bigr].
\end{equation*}
\end{prop}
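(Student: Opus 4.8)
The plan is to follow the scheme behind Corollaries~\ref{cor:uptodAepsiloncase1} and \ref{cor:uptodAepsiloncase4}, namely to pass to $\gamma$-flat vectors via Proposition~\ref{prop:redtosupflat} and then control the resulting flat suprema through the graph structure, but to replace the block estimate of Proposition~\ref{prop:est2Ykl} (obtained via crude $5^{k}$-nets, and whose effective degree exponent degenerates to $\frac12$ near $(p,q)=(2,2)$) by an estimate coming from the Slepian-type comparison of Subsection~\ref{subsect:slepian-below4}. That comparison has a constant which stays bounded near $(2,2)$, at the price of a logarithmic factor with exponent $2/(4-p^*\vee q)\le 2/\delta$; this exponent is harmless precisely because $p^*\vee q$ is bounded away from $4$.

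Concretely, I would first invoke estimate~\eqref{eq:redtosupflat2} with the flatness parameter proportional to $\ve$, reducing the bound on $\Ex\|G_A\|_{p\to q}$ to one on $\ve^{-1}\max_{k,l}\Ex Y_{k,l}(d_A^{c\ve})$ plus the harmless term $\ve^{-1}\sqrt{\Log(mn)}\|A\|_\infty$. To estimate $\Ex Y_{k,l}(\gamma)$ I would apply Lemma~\ref{lem:KtimesL} not blockwise but to the sets $\mathcal S_k:=\bigcup_{I\in\mathcal I_4(k)}K(q^*,m,I,\gamma)$ and $\mathcal T_l:=\bigcup_{J\in\mathcal J_4(l)}K(p,n,J,\gamma)$ of flat vectors supported on $4$-connected sets, so that $Y_{k,l}(\gamma)=\sup_{s\in\mathcal S_k,t\in\mathcal T_l}\sum a_{ij}g_{ij}s_it_j$ and the lemma applies directly. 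The two column/row Gaussian processes it produces are controlled as in the proof of Proposition~\ref{prop:est2Ykl} (a net, the bound~\eqref{eq:fixeds}, Lemma~\ref{lem:supgauss}, and the cardinality estimate~\eqref{eq:size-Ir(k)}), contributing $\sqrt{p^*}D_1+\sqrt{q}D_2$ and a degree-dependent remainder. The genuinely new point is the quadratic Slepian terms $\frac12\Ex\sup_{s\in\mathcal S_k}\sum_i s_i^2 Y_i$ and $\frac12\Ex\sup_{t\in\mathcal T_l}\sum_j t_j^2 Y_{m+j}$ for the global negative-part vector $Y$: using~\eqref{eq:VarYi}, the flatness (so that the weights $s_i^2$ are spread over a $4$-connected set and the sizes entering the logarithms are governed by $d_A$, not by $m,n$), the splitting $a_{ij}^4\le|a_{ij}|^{p^*}\|A\|_\infty^{4-p^*}$ valid since $p^*<4$, a reordering as in the proof of Proposition~\ref{prop:slepianp*qbelow4}, and Young's inequality, these should be bounded by $\frac{p^*}4D_1+\frac{q}4D_2$ plus terms of the form $\Log^{2/(4-p^*\vee q)}(d_A)\,\|A\|_\infty$ and $d_A^{1/4}\sqrt{\Log d_A}\,\|A\|_\infty$, the exponent $\frac14$ coming from $\Var Y_i\le d_{1,A}^{1/2}\|A\|_\infty^2$.

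The decisive elementary input is then the identity $\sup_{x\ge1}x^{-a}(\ln ex)^{b}=(b/(ea))^{b}$. With $a=\frac14$ and $b=2/(4-p^*\vee q)\le 2/\delta$ it turns $\Log^{2/(4-p^*\vee q)}(d_A)\,\|A\|_\infty$ into $(C/\delta)^{2/\delta}d_A^{1/4}\|A\|_\infty$; with $a\sim\ve$ and $b=\frac12$ it turns $d_A^{1/4}\sqrt{\Log d_A}\,\|A\|_\infty$ into $\ve^{-1/2}d_A^{1/4+\ve}\|A\|_\infty$; with $a\sim\ve$ and $b=1$ it absorbs a further $\sqrt{\Log d_A}$ into $d_A^{\ve}$ at the cost of a factor $\ve^{-1}$. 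Passing from the fixed pair $(\mathcal S_k,\mathcal T_l)$ to $\max_{k,l}$ by one more application of~\eqref{eq:supgauss} (which, by the flatness and~\eqref{eq:lipconst}, adds only $\sqrt{\Log(mn)}\|A\|_\infty$ and a power $\ve^{-1}$) then yields the asserted bound, the overall $\ve^{-2}$ being the product of this $\ve^{-1}$ with the one from Proposition~\ref{prop:redtosupflat}.

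The hard part will be, first, the quadratic term $\Ex\sup_{s\in\mathcal S_k}\sum_i s_i^2 Y_i$: estimating it simply by the global $\Ex\max_i|Y_i|$ is too lossy (for $A$ a permutation matrix this would give $\Log^{2/\delta}(m)$ rather than $\sqrt{\Log m}$), so one must genuinely exploit that a flat vector spreads its $\ell_2$-mass over a $4$-connected set; this is the one place where graph structure beyond Proposition~\ref{prop:redtosupflat} enters, and where the argument departs from that of Corollary~\ref{cor:uptodAepsiloncase4}. Second, the $\delta$-bookkeeping: one must arrange that the exponent $2/(4-p^*\vee q)$ appears at exactly one place and is never multiplied by another quantity degenerating at $p^*\vee q=4$, that all net and concentration losses are polynomial in $\ve^{-1}$ and independent of $\delta$, and that every size entering a logarithm is honestly a polynomial in $d_A$, so that the $x^{-a}(\ln ex)^{b}$ trade-offs are legitimate and produce exactly the constant $(C/\delta)^{2/\delta}$.
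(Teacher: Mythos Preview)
Your plan to apply Lemma~\ref{lem:KtimesL} directly to $\mathcal S_k\times\mathcal T_l$ and then control the quadratic terms $\Ex\sup_{s\in\mathcal S_k}\sum_is_i^2Y_i$ via flatness and connectedness has a genuine gap, and the paper proceeds quite differently precisely to avoid it. The obstacle is the fluctuation of $X_s:=\sum_is_i^2Y_i$ as $s$ ranges over $\mathcal S_k$, equivalently as the supporting set $I$ ranges over $\mathcal I_4(k)$. By~\eqref{eq:size-Ir(k)} one has $\Log|\mathcal I_4(k)|\sim\Log m+k\Log d_A$, and the cost in Lemma~\ref{lem:supgauss} is this times $\sup_s\sqrt{\Var X_s}$. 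From the diagonal bound~\eqref{eq:VarYi} alone, $\sqrt{\Var X_s}\le\sum_is_i^2\sigma(Y_i)\le d_A^{1/4}\|A\|_\infty$, and the fluctuation term becomes $(\sqrt{\Log m}+\sqrt{k\Log d_A})\,d_A^{1/4}\|A\|_\infty$: the $\sqrt k$ is unbounded in $k$, and the cross term $\sqrt{\Log m}\,d_A^{1/4}$ cannot be split into $\sqrt{\Log(mn)}+Cd_A^{1/4+\ve}$ by any Young-type inequality. If instead you exploit flatness together with the only available operator-norm information, $\|B^-\|_{\mathrm{op}}=\|A\circ A\|_{2\to2}\le d_A\|A\|_\infty^2$, you get $\sqrt{\Var X_s}\le d_A^{1/2}\gamma k^{-1/q^*}\|A\|_\infty$, and the $\sqrt{k\Log d_A}$ piece produces $d_A^{1/2}\sqrt{\Log d_A}\,\|A\|_\infty$. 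Either way the degree exponent is at least $\tfrac12$, not $\tfrac14+\ve$; and an exponent $\ge\tfrac12$ would make Proposition~\ref{prop:fromuptopowerdA} inapplicable as $p^*,q\to2$, defeating the whole purpose of the appendix. The difficulty is intrinsic: the global $Y$ carries the covariance $B^-$ of the \emph{full} matrix, and no combination of~\eqref{eq:VarYi} with a crude operator-norm bound yields $d_A^{1/4}$ uniformly in $k$.

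What the paper does (Corollary~\ref{cor:Ykl} via Lemmas~\ref{lem:decomp}--\ref{lem:fixedI0}) is to avoid the global $Y$ altogether by first \emph{reducing to a small submatrix} and only then invoking the Slepian comparison. For $k<d_A^{1/2}$ the crude net bound~\eqref{eq:estyklsmallk} already gives $d_A^{1/4}\sqrt{\Log d_A}$. For $k\ge d_A^{1/2}$ a greedy choice (Lemma~\ref{lem:decomp}) extracts $I_0\subset I$ of size $r$ so that outside $I_0$ the weak variance is small (Lemma~\ref{lem:outsideI0}); the part inside $I_0$ is split by $4$-connected components, the undersized components are absorbed into $d_A^{-\ve}\Ex\|G_A\|_{p\to q}$ via Corollary~\ref{cor:logdA}, and on each remaining component the Slepian bound (Proposition~\ref{prop:slepianp*qbelow4gen}, through Lemma~\ref{lem:fixedI0}) is applied to the \emph{submatrix} $(a_{ij})_{i\in I_0'',j\in I_0'}$ of dimensions at most $d_A^2r\times d_Ar$. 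Only then are the sizes entering the quadratic-term logarithms genuinely polynomial in $d_A$, and the optimization $r\sim kd_A^{-1/2}$ balances the inside and outside contributions to produce $d_A^{1/4+O(\ve)}$.
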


To prove Proposition~\ref{prop:pqdA}  (and Proposition \ref{prop:worseD3'-intro}
for $p^{*},q\in[2,3]$) we will need the following result.

\begin{prop}
\label{prop:slepianp*qbelow4gen}
If $ p^*, q\in[2,4)$ and $a,b\in (0,1]$, then
\begin{multline*}
\Ex \sup_{s\in B_{q^*}^m\cap aB_\infty^m}\ \sup_{t\in B_p^n\cap bB_\infty^n}
\sum_{i,j}a_{ij}g_{ij}s_it_j
\\
\lesssim D_1+D_2
+\Bigl(a^{(2-q^*)/2}\Log^{\frac{2}{4-p^*}}(ma^{q^*})
+b^{(2-p)/2}\Log^{\frac{2}{4-q}}(nb^p)\Bigr)\max_{i,j} |a_{ij}| .
\end{multline*}
\end{prop}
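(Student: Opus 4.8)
The plan is to follow the proof of Proposition~\ref{prop:slepianp*qbelow4}, but to run van Handel's Slepian--Fernique comparison (Lemma~\ref{lem:KtimesL}) with the \emph{truncated} index sets $K:=B_{q^*}^m\cap aB_\infty^m$ and $L:=B_p^n\cap bB_\infty^n$ in place of the full balls. This bounds the quantity in question by four expectations: the two ``cross Gaussian'' suprema
\[
\Ex\sup_{s\in K,\,t\in L}\sum_{i}s_ig_i\sqrt{\textstyle\sum_j t_j^2a_{ij}^2},\qquad \Ex\sup_{s\in K,\,t\in L}\sum_{j}t_jg_{m+j}\sqrt{\textstyle\sum_i s_i^2a_{ij}^2},
\]
and the two ``$Y$-terms'' $\tfrac12\Ex\sup_{s\in K}\sum_i s_i^2Y_i$ and $\tfrac12\Ex\sup_{t\in L}\sum_j t_j^2Y_{m+j}$, where $Y$ is the Gaussian vector from Lemma~\ref{lem:KtimesL}, whose coordinate variances are controlled by \eqref{eq:VarYi}. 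The truncations play two roles: they produce the prefactors $a^{(2-q^*)/2}$ and $b^{(2-p)/2}$ (through the pointwise bounds $s_i^2\le a^{2-q^*}|s_i|^{q^*}$ and $t_j^2\le b^{2-p}|t_j|^p$), and --- crucially --- they force the logarithms to appear at the ``effective dimensions'' $ma^{q^*}$ and $nb^p$ rather than at the ambient $m$ and $n$.

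Consider first the $Y$-term $\tfrac12\Ex\sup_{s\in K}\sum_i s_i^2Y_i$. Using $s_i^2\le a^{2-q^*}|s_i|^{q^*}$ and $\sum_i s_i^2Y_i\le\sum_i s_i^2(Y_i)_+$, and writing $v_i:=|s_i|^{q^*}$ (so $v\ge0$, $\sum_i v_i\le1$, $v_i\le a^{q^*}$), a linear-programming bound gives $\sup_{s\in K}\sum_i s_i^2Y_i\le a^{2-q^*}a^{q^*}\max_{|I|=\lceil a^{-q^*}\rceil}\sum_{i\in I}(Y_i)_+=a^2\max_{|I|=\lceil a^{-q^*}\rceil}\sum_{i\in I}(Y_i)_+$. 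Applying Lemma~\ref{lem:supgauss} over the $\binom m{\lceil a^{-q^*}\rceil}$ index sets, with $\mathrm{Var}\bigl(\sum_{i\in I}(Y_i)_+\bigr)\le\sum_{i\in I}\mathrm{Var}(Y_i)\le\lceil a^{-q^*}\rceil\max_i(\sum_j a_{ij}^4)^{1/2}$ from \eqref{eq:VarYi} and $\log\binom m{\lceil a^{-q^*}\rceil}\lesssim a^{-q^*}\log(ema^{q^*})$, yields $\Ex\max_{|I|=\lceil a^{-q^*}\rceil}\sum_{i\in I}(Y_i)_+\lesssim a^{-q^*}\sqrt{\log(ema^{q^*})}\,\max_i(\sum_j a_{ij}^4)^{1/4}$, hence $\tfrac12\Ex\sup_{s\in K}\sum_i s_i^2Y_i\lesssim a^{2-q^*}\sqrt{\log(ema^{q^*})}\,\max_i(\sum_j a_{ij}^4)^{1/4}$. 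Finally, the Young-type splitting from the proof of Proposition~\ref{prop:slepianp*qbelow4} (namely $\sqrt{\Log N}(\sum_j a_{ij}^4)^{1/4}\le\tfrac{p^*}4\|(a_{ij})_j\|_{p^*}+\tfrac{4-p^*}4\Log^{2/(4-p^*)}(N)\max_j|a_{ij}|$) turns this into $D_1+a^{(2-q^*)/2}\Log^{2/(4-p^*)}(ma^{q^*})\max_{i,j}|a_{ij}|$, using $a^{2-q^*}\le a^{(2-q^*)/2}\le1$. The term $\tfrac12\Ex\sup_{t\in L}\sum_j t_j^2Y_{m+j}$ is handled symmetrically and contributes $D_2+b^{(2-p)/2}\Log^{2/(4-q)}(nb^p)\max_{i,j}|a_{ij}|$.

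For the first cross Gaussian term, take the supremum over $s\in B_{q^*}^m$ first, reducing it to $\Ex\sup_{t\in L}\|(g_i\sqrt{\sum_j t_j^2a_{ij}^2})_i\|_q$; using $\sum_j t_j^2a_{ij}^2\le b^{2-p}\sum_j|t_j|^pa_{ij}^2$ and the convexity (as in the proof of Corollary~\ref{cor:Slepian-pq}) of $w\mapsto\sum_i|g_i|^q(\sum_j w_ja_{ij}^2)^{q/2}$ on the polytope $\{w\ge0:\sum_j w_j\le1,\ w_j\le b^p\}$, this reduces, up to the factor $b^{1-p/2}$, to $\Ex\max_{|J|\lesssim b^{-p}}\|(g_i\|(a_{ij})_{j\in J}\|_2)_i\|_q$, whose first moment is $\lesssim b^{-p/2}\sqrt q D_2$ by Minkowski's inequality ($\|(\sum_{j\in J}a_{ij}^2)_i\|_{q/2}\le\sum_{j\in J}\|(a_{ij})_i\|_q^2\le|J|D_2^2$). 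The remaining point --- bounding the fluctuation of the maximum over $J$ with no worse than a $\sqrt{|J|}\lesssim b^{-p/2}$ loss and a logarithm of size $\Log(nb^p)$ --- is the delicate one: a crude union bound over all $\binom n{b^{-p}}$ subsets is too lossy, so instead I would dyadically decompose $t$ into the $\lesssim\Log(nb^p)$ level sets $\{j:2^{-k-1}b<|t_j|\le2^{-k}b\}$ (each of support size $\lesssim(2^{-k}b)^{-p}$), estimate the supremum restricted to one level by a net argument on the support in the spirit of Lemma~\ref{lem:largecoordinates} together with the fixed-vector bound \eqref{eq:fixeds}, and recombine the levels via Lemma~\ref{lem:supgauss} at the cost of a $\sqrt{\Log\Log(nb^p)}$ factor; since $q<4$ one has $\sqrt{\Log(nb^p)}\le\Log^{2/(4-q)}(nb^p)$, and this term ends up $\lesssim D_2+b^{1-p/2}\Log^{2/(4-q)}(nb^p)\max_{i,j}|a_{ij}|$. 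The second cross Gaussian term is symmetric and gives $\lesssim D_1+a^{1-q^*/2}\Log^{2/(4-p^*)}(ma^{q^*})\max_{i,j}|a_{ij}|$. Summing the four bounds yields the assertion.

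The step I expect to be the main obstacle is the control of the logarithmic factors: ensuring that in each of the four pieces the logarithm appears at $ma^{q^*}$ (resp.\ $nb^p$) rather than at $m$ (resp.\ $n$). For the $Y$-terms this falls out of the linear-programming reduction to index sets of size $\lceil a^{-q^*}\rceil$, but for the cross Gaussian terms it genuinely requires the dyadic level-set decomposition of $s$ (resp.\ $t$) combined with a net/chaining argument, since a direct appeal to Lemma~\ref{lem:supgauss} over all relevant subsets would cost a power of $a^{-q^*}$ (resp.\ $b^{-p}$) instead of the desired $a^{-q^*/2}$ (resp.\ $b^{-p/2}$).
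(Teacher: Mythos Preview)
Your overall strategy --- apply Lemma~\ref{lem:KtimesL} with the truncated sets $K=B_{q^*}^m\cap aB_\infty^m$ and $L=B_p^n\cap bB_\infty^n$ and bound the four resulting terms --- is exactly what the paper does. The difference lies in the tool used to control each of the four pieces, and in two of them your argument has a genuine gap.

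\medskip

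\textbf{The $Y$-terms.} Your bound $\Var\bigl(\sum_{i\in I}(Y_i)_+\bigr)\le\sum_{i\in I}\Var(Y_i)$ is false in general: the $Y_i$ are correlated (their covariance is $B^-$), and already for identically equal Gaussians $Y_1=\dots=Y_k=g$ one has $\Var\bigl(k g_+\bigr)=k^2\Var(g_+)\gg k\Var(g)$ for $k\ge 3$. Nothing in the setup rules out high correlations among the $Y_i$, and your argument uses no special structure of $B^-$. More fundamentally, Lemma~\ref{lem:supgauss} as stated is for Gaussian processes, so the union bound over $\binom{m}{\lceil a^{-q^*}\rceil}$ subsets would require a Lipschitz bound on $G\mapsto\sum_{i\in I}(Y_i)_+$, and the natural bound for this is $\sum_{i\in I}\sqrt{\Var Y_i}\lesssim|I|\max_i(\sum_j a_{ij}^4)^{1/4}$, not $\sqrt{|I|}\max_i(\sum_j a_{ij}^4)^{1/4}$. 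That extra $\sqrt{|I|}\sim a^{-q^*/2}$ would destroy the bound.

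The fix is to avoid any union bound over subsets and instead pass to order statistics. Since $\max_{|I|=k}\sum_{i\in I}(Y_i)_+=\sum_{l\le k}((Y)_+)^*_l$, one needs only the \emph{individual} tail bounds $\Pr(|Y_i|\ge t\sigma_i)\le 2e^{-t^2/2}$ and the elementary splitting $Z_l^*\le M+(Z_l-M)_+$ with $M=\sqrt{2\Log(m/k)}\,\max_i\sigma_i$, followed by $\sum_{l\le k}(\cdot)\le\sum_{l\le m}(\cdot)$ on the overshoot. This is the content of the paper's Lemma~\ref{lem:ellqmaxk} (with $\rho=1$), and it gives precisely $a^{2-q^*}\sqrt{\Log(ma^{q^*})}\,\max_i(\sum_j a_{ij}^4)^{1/4}$ with no joint-distribution input.

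\medskip

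\textbf{The cross Gaussian terms.} Your reduction to $\max_{|J|\lesssim b^{-p}}$ is fine, but as you yourself note, a direct union bound over such $J$ costs a factor $b^{1-p}$ rather than $b^{1-p/2}$, and the dyadic level-set workaround is only sketched. The paper sidesteps this entirely with a second idea: after passing (via $L\subset b^{(2-p)/2}(B_2^n\cap b^{p/2}B_\infty^n)$) to the $B_2\cap cB_\infty$ body, it linearizes using convexity of $x\mapsto x^{q/2}$ to get
\[
\Bigl(\sum_i|g_i|^q\Bigl(\sum_j t_j^2a_{ij}^2\Bigr)^{q/2}\Bigr)^{1/q}
\le \Bigl(\sum_j t_j^2\Bigr)^{\frac12-\frac1q}\Bigl(\sum_j t_j^2\,\|(a_{ij}g_i)_i\|_q^q\Bigr)^{1/q},
\]
and then applies Lemma~\ref{lem:ellqmaxk} once more, now with $Z_j=\|(a_{ij}g_i)_i\|_q$, $\rho=q$, $c=b^{p/2}$; the sub-Gaussian tail for $Z_j$ comes from Gaussian concentration with Lipschitz constant $\max_i|a_{ij}|$. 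This yields $D_2+b^{(2-p)/2}\sqrt{\Log(nb^p)}\max_{i,j}|a_{ij}|$ directly, with no decomposition of $t$ and no union bound over supports.

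\medskip

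In short: the missing ingredient throughout is the order-statistics maximal inequality (Lemma~\ref{lem:ellqmaxk}), which replaces your subset union bounds in both the $Y$-terms and the cross terms and is what makes the ``effective dimension'' logarithms $\Log(ma^{q^*})$, $\Log(nb^p)$ appear cleanly.
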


\begin{proof}[Proof of Propositions~\ref{prop:dimdependent} and \ref{prop:worseD3'-intro} for 
$2\leq p^{*},q\leq 3$]
Proposition~\ref{prop:slepianp*qbelow4gen}, applied with $a=b=1$, yields 
Proposition~\ref{prop:worseD3'-intro} (with $\gamma=2$). 
Proposition~\ref{prop:dimdependent} follows from Proposition~\ref{prop:fromuptopowerdA} together 
with  Propositions~\ref{prop:worseD3'-intro} and \ref{prop:pqdA}.
\end{proof}

To prove Proposition~\ref{prop:slepianp*qbelow4gen} we will use a modification of van Handel's argument from \cite{vH}. 
Let $A\circ A=(a_{ij}^2)_{i\le m, j\le n}$ be the variance  profile of 
$G_A=(a_{ij}g_{ij})_{i\le m,j\le n}$ and
\[
B=(b_{ij})_{i,j\leq m+n}
=\begin{pmatrix}
0 &  A\circ A    
\\
(A\circ A)^T& 0 
\end{pmatrix}.
\]
Let $Y$ be an $(m+n)$-dimensional Gaussian vector with mean $0$ and covariance matrix $B^-$  
being the negative part of $B$, i.e., $B^-=-\sum_{i=1}^{m+n} (\lambda_i\wedge 0)u_iu_i^T$, 
where $B=\sum_{i=1}^{m+n} \lambda_i u_iu_i^T$ is the spectral decomposition of $B$. 
The proof of \cite[Corollary~4.2]{vH} yields that  $Y_k$ are Gaussian random variables with 
variance $\Ex Y_k^2\leq (\sum_{l}b_{k,l}^2)^{1/2}$. Hence, 
\begin{equation}
\label{eq:VarYi}
\begin{split}\mathrm{Var}(Y_i) &\leq  \Bigl(\sum_{j}a_{ij}^4\Bigr)^{1/2},
\quad 1\leq i\leq m,
\\ 
\mathrm{Var}(Y_{j+m}) &\leq  \Bigl(\sum_{i}a_{ij}^4\Bigr)^{1/2},
\quad 1\leq j\leq n.
\end{split}
\end{equation}

The proof of Proposition~\ref{prop:slepianp*qbelow4gen} uses  the following two lemmas. The first one is based on the ideas from \cite{vH} and the second one is quite  standard  (cf.\ \cite[Lemma~14]{Lbern} for cases $\rho=1,2$).

 \begin{lem}
 \label{lem:KtimesL}
 For any bounded, nonempty sets $K\subset \er^m$ and $L\subset \er^n$,
 \begin{align*}
 \Ex\sup_{s\in K,t\in L}a_{ij}g_{ij}s_it_j
 &\leq 
 \Ex\sup_{s\in K,t\in L}\sum_{i=1}^m s_ig_i\sqrt{\sum_{j=1}^n t_j^2a_{ij}^2}
 \\ & \qquad+\Ex\sup_{s\in K,t\in L}\sum_{j=1}^n t_jg_{m+j}\sqrt{\sum_{i=1}^m s_i^2a_{ij}^2}
 \\
 &\qquad
 +\frac{1}{2}\Ex\sup_{s\in K}\sum_{i=1}^ms_i^2Y_i
 +\frac{1}{2}\Ex\sup_{t\in L}\sum_{j=1}^n t_j^2Y_{m+j}.
 \end{align*}
 \end{lem}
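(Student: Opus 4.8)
The plan is to apply the Slepian--Fernique comparison inequality to the Gaussian process $X_{s,t}=\sum_{i,j}a_{ij}g_{ij}s_it_j$ indexed by $(s,t)\in K\times L$, comparing it to a suitable auxiliary Gaussian process whose supremum splits into the four terms on the right-hand side. First I would compute the incremental variance of $X_{s,t}$: since $\Ex X_{s,t}^2=\sum_{i,j}a_{ij}^2s_i^2t_j^2$, we get $\Ex(X_{s,t}-X_{s',t'})^2=\sum_{i,j}a_{ij}^2(s_it_j-s_i't_j')^2$, and the standard trick is to write $s_it_j-s_i't_j'=s_i(t_j-t_j')+t_j'(s_i-s_i')$ (or a symmetrized version), so that after expanding one obtains a cross term that must be dominated. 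The key algebraic identity — essentially the one underlying van Handel's Corollary~4.2 in \cite{vH} — is that $(s_it_j-s_i't_j')^2\le 2(t_j^2+ (t_j')^2)^{?}\ldots$; more precisely one uses $(s_it_j-s_i't_j')^2 \le (s_i-s_i')^2 t_j^2 + (t_j-t_j')^2 (s_i')^2 + \text{(a bilinear remainder controlled by $B$)}$ and the remainder is exactly what the matrix $B$ and its negative part $B^-$ were introduced to absorb.

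Concretely, I would introduce the comparison process
\[
X'_{s,t}=\sum_{i=1}^m s_i g_i\sqrt{\sum_j t_j^2 a_{ij}^2}+\sum_{j=1}^n t_j g_{m+j}\sqrt{\sum_i s_i^2 a_{ij}^2}+\frac12\sum_{i=1}^m s_i^2 \widetilde Y_i+\frac12\sum_{j=1}^n t_j^2\widetilde Y_{m+j},
\]
where $g_1,\ldots,g_{m+n}$ are iid standard Gaussians independent of a Gaussian vector $\widetilde Y$ with covariance $B^-$, and all three blocks are mutually independent. Then I would verify that $\Ex(X_{s,t}-X_{s',t'})^2\le \Ex(X'_{s,t}-X'_{s',t'})^2$ for all $(s,t),(s',t')\in K\times L$, which is the hypothesis of Slepian--Fernique. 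The first two blocks of $X'$ contribute the ``diagonal'' parts $\sum_{i,j}a_{ij}^2(s_i-s_i')^2 t_j^2$ and $\sum_{i,j}a_{ij}^2(t_j-t_j')^2(s_i')^2$ (up to the symmetrization one actually needs), while the quadratic blocks $\frac12\sum s_i^2\widetilde Y_i$ etc.\ have incremental variance involving $\sum_{k,l}b_{kl}^-(\cdots)$ which, by the defining property $B+B^-\succeq 0$ pointwise on the relevant quadratic forms, exactly compensates the bilinear cross terms left over from the expansion. Since the index set factorizes as $K\times L$ and the comparison process is a sum of independent pieces (with the $\widetilde Y$-blocks depending only on $s$ resp.\ only on $t$), $\Ex\sup X'_{s,t}$ is bounded by the sum of the four separate suprema, giving the claimed bound.

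The main obstacle I expect is getting the symmetrization in the difference $s_it_j-s_i't_j'$ right so that the leftover bilinear term is genuinely of the form $-\sum b_{kl}^-(\text{quadratic in }(s,t)-(s',t'))$ with the correct sign, i.e.\ checking the Slepian condition with the constant $1$ (not merely up to a factor). This is precisely the content of the computation behind \cite[Corollary~4.2]{vH} and its proof; the role of \eqref{eq:VarYi} is only to record the size of $\Var(Y_i)$ for later use, not for this lemma itself. A secondary technical point is ensuring that $K$ and $L$ bounded (and the process a.s.\ bounded) justifies passing from the finite-dimensional comparison to the comparison of expected suprema, which is routine by approximating with finite subsets and using separability. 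Once the variance comparison is in hand, the decomposition of the supremum of $X'$ over the product set, and the identification of the four terms, is immediate.
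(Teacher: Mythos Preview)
Your proposal is correct and follows essentially the same route as the paper. The paper's execution is slightly slicker: rather than verifying the increment comparison directly for the bilinear process $X_{s,t}$, it sets $v=(s,t)\in\er^{m+n}$ and forms the symmetric matrix $X=\begin{pmatrix}0&G_A\\G_A^T&0\end{pmatrix}$, so that $\sum_{k,l}X_{kl}v_kv_l=2\sum_{i,j}a_{ij}g_{ij}s_it_j$; then the needed inequality $\Ex|G_v-G_{v'}|^2\le\Ex|Z_v-Z_{v'}|^2$ is \emph{exactly} the one already established in \cite[proof of Theorem~4.1]{vH} for symmetric Gaussian matrices with variance profile $B$, and no adaptation of the bilinear/cross-term bookkeeping is required. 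Applying Slepian--Fernique over $K\times L$, unwinding $Z_v$, and dividing by $2$ produces your four terms with the factor $\tfrac12$ on the $Y$-blocks. One small correction to your last paragraph: the first two pieces of $X'_{s,t}$ depend on both $s$ and $t$, so the splitting of $\Ex\sup X'$ into four summands is not a product-structure argument but simply $\sup(A+B+C+D)\le\sup A+\sup B+\sup C+\sup D$, with the last two suprema then reducing to $s\in K$ and $t\in L$ respectively.
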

 
 \begin{proof}
 Let us define the symmetric Gaussian matrix
 \[
X=(X_{ij})_{i,j\le m+n} 
= \begin{pmatrix}
0 &  G_A    
\\
(G_A)^T& 0
\end{pmatrix}.
\] 
Then $B$ is the variance profile of $X$. Consider two centered Gaussian processes  
indexed by $v\in \er^{n+m}$:
\[
G_{v}=\sum_{k,l\le m+n} X_{ kl}v_kv_l \quad \text{ and }\quad 
Z_v=2\sum_{k=1}^{m+n} v_kg_k\sqrt{\sum_{l=1}^{m+n} v_l^2b_{k,l}^2}+\sum_{k=1}^{m+n}v_k^2Y_k,
\] 
It was shown in  \cite[proof of Theorem~4.1]{vH} that for any $v,v'\in \er^{n+m}$,
$\Ex |G_v-G_{v'}|^2\leq \Ex |Z_v-Z_{v'}|^2$ and, as a consequence, the Slepian-Fernique 
lemma (see, e.g., \cite[Theorem 13.3]{BLM2013}) yields
\[
\Ex\sup_{v\in K\times L} G_v \le \Ex \sup_{v\in K\times L} Z_v.
\]
Thus, to finish the proof it is enough to observe that
\begin{align*}
\sup_{v\in K\times L} G_v&=
\sup_{v\in K\times L} \sum_{k,l\le m+n} X_{kl}v_kv_l 
\\ 
&= \sup_{s\in K, t\in L} \Biggl( \sum_{i\le m, j\le n} (G_A)_{ij}s_it_j 
+ \sum_{i\le m, j\le n} ((G_A)^T)_{ji}t_j s_i\Biggr)
\\
&=2\sup_{s\in K,t\in L} \sum_{i\leq m,j\leq n}a_{ij}g_{ij}s_it_j
\end{align*}
and
\begin{align*}
\sup_{v\in K\times L} Z_v
=  \sup_{s\in K,t\in L}\Biggr(2\sum_{i=1}^m & s_ig_i\sqrt{\sum_{j=1}^n t_j^2a_{ij}^2}
 +2\sum_{j=1}^n t_jg_{m+j}\sqrt{\sum_{i=1}^m s_i^2a_{ij}^2}
\\
&
 +\sum_{i=1}^ms_i^2Y_i+\sum_{j=1}^n t_j^2Y_{m+j}\Biggr).
\qedhere
\end{align*}
 \end{proof}

\begin{lem}
\label{lem:ellqmaxk}
Suppose that $1\leq \rho\leq \rho_0$, $c\in (0,1]$, $m_j,\sigma_j\geq 0$ and nonnegative random variables $Z_1,\ldots,Z_n$  satisfy
\[
\Pr(Z_j\geq m_j+t\sigma_j)\leq e^{-t^2/2} \quad \mbox{for every } 1\leq j\leq n.
\]
Then
\[
\Ex \sup_{u\in B_2^n\cap cB_\infty^n}\Bigl(\sum_{j=1}^n u_j^2 Z_j^\rho\Bigr)^{1/\rho}
\lesssim_{\rho_0} \max_{j}m_j+\sqrt{\Log(nc^2)}\max_{j}\sigma_j.
\]
\end{lem}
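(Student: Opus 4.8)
The plan is to reduce the supremum, via a ``bathtub'' argument, to a concentration estimate for a partial sum of the nonincreasing rearrangement of the $W_j:=(Z_j-m_j)_+$, and then to prove that estimate by a layer--cake identity combined with Jensen's inequality; no independence of the $Z_j$ is used.

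Set $M:=\max_j m_j$, $\Sigma:=\max_j\sigma_j$, and $W_j:=(Z_j-m_j)_+$. Then $Z_j\le M+W_j$ pointwise, and taking $t=s/\sigma_j$ in the hypothesis (and using $\sigma_j\le\Sigma$) gives $\Pr(W_j>s)\le e^{-s^2/(2\Sigma^2)}$ for every $s>0$. By Minkowski's inequality for the weighted $\ell_\rho$-seminorm $x\mapsto(\sum_j u_j^2|x_j|^\rho)^{1/\rho}$ (with $\rho\ge1$) and $\sum_j u_j^2\le1$, for every $u\in B_2^n\cap cB_\infty^n$ we get $(\sum_j u_j^2 Z_j^\rho)^{1/\rho}\le M+(\sum_j u_j^2 W_j^\rho)^{1/\rho}$, so it suffices to bound $\Ex\sup_u(\sum_j u_j^2 W_j^\rho)^{1/\rho}\lesssim_{\rho_0}\Sigma\sqrt{\Log(nc^2)}$. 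Next, the constraints $u_j^2\le c^2$, $\sum_j u_j^2\le1$ force $\sum_j u_j^2 W_j^\rho\le c^2\sum_{j=1}^{k_0}W_{(j)}^\rho$, where $W_{(1)}\ge W_{(2)}\ge\dots\ge W_{(n)}$ is the nonincreasing rearrangement of $(W_j)$ and $k_0:=\min\{n,\lceil c^{-2}\rceil\}$ (the optimal weights spread mass $c^2$ over the $k_0$ largest coordinates). Taking $\rho$-th roots and using $c^{2/\rho}k_0^{1/\rho}\le 2^{1/\rho}\le2$ together with the elementary bound $\Log(n/k_0)\le\Log(nc^2)$, the task reduces to
\[
\Ex\Bigl(\sum_{j=1}^{k_0}W_{(j)}^\rho\Bigr)^{1/\rho}\lesssim_{\rho_0}k_0^{1/\rho}\,\Sigma\sqrt{\Log(n/k_0)}.
\]

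For this, Jensen's inequality (concavity of $t\mapsto t^{1/\rho}$ on $[0,\infty)$ for $\rho\ge1$) lets us pass from the $\rho$-th root of the sum to the sum of $\rho$-th powers: $\Ex(\sum_{j=1}^{k_0}W_{(j)}^\rho)^{1/\rho}\le(\Ex\sum_{j=1}^{k_0}W_{(j)}^\rho)^{1/\rho}$. Writing $x^\rho=\rho\int_0^\infty s^{\rho-1}\ind_{\{x>s\}}\,ds$ and summing over the $k_0$ largest entries gives $\sum_{j=1}^{k_0}W_{(j)}^\rho=\rho\int_0^\infty s^{\rho-1}\min\{k_0,N(s)\}\,ds$, where $N(s):=|\{j\le n:W_j>s\}|$. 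Since $\Ex N(s)\le ne^{-s^2/(2\Sigma^2)}$ and $\Ex\min\{k_0,N(s)\}\le\min\{k_0,\Ex N(s)\}$, we obtain
\[
\Ex\sum_{j=1}^{k_0}W_{(j)}^\rho\le\rho\int_0^\infty s^{\rho-1}\min\bigl\{k_0,\,ne^{-s^2/(2\Sigma^2)}\bigr\}\,ds.
\]
Splitting the integral at the crossover $s_*:=\Sigma\sqrt{2\Log(n/k_0)}$ --- where the two quantities inside the minimum balance --- and estimating the Gaussian-tail part by the substitution $s=s_*+v$, the right-hand side is $\lesssim_{\rho_0}k_0\,\Sigma^\rho\bigl(\Log(n/k_0)\bigr)^{\rho/2}$; taking $\rho$-th roots gives the reduced bound (the $\rho$-dependent constants are absorbed using $\rho\le\rho_0$), and the lemma follows.

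The only genuinely computational step is this final elementary Gaussian integral, which I expect to be routine. The conceptual point --- and what forces the sharp argument $\Log(nc^2)$ rather than the crude $\Log n$ --- is to avoid a union bound over the $\binom{n}{k_0}$ subsets of size $k_0$, which, in the absence of independence among the $Z_j$, would cost a spurious factor of order $c^{-1}$; instead one controls the partial sum of the rearrangement directly through the layer--cake identity, using only the one-dimensional marginal tails $\Pr(W_j>s)\le e^{-s^2/(2\Sigma^2)}$.
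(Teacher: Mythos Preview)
Your proof is correct and follows essentially the same approach as the paper's: both reduce the supremum to an average of the top $k\sim c^{-2}$ order statistics, apply Jensen to pull the $1/\rho$ power outside the expectation, and then control the resulting sum via the sub-Gaussian tails with a threshold at $\Sigma\sqrt{2\Log(n/k)}$. The only cosmetic difference is that you package the last step as a layer-cake identity for $\sum_{j\le k_0}W_{(j)}^\rho=\rho\int_0^\infty s^{\rho-1}\min\{k_0,N(s)\}\,ds$ and split at $s_*$, whereas the paper subtracts the threshold $m+\sqrt{2\Log(n/k)}\sigma$ from each $Z_l$ and extends the sum of positive parts from the top $k$ to all $n$ terms; these two moves are equivalent.
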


\begin{proof}
Let $k$ be a positive integer such that $\frac{1}{k+1}< c^2\leq \frac{1}{k}$ and 
$(Z_1^*,\ldots,Z_n^*)$ be the nonincreasing rearrangement of $(Z_1,\ldots,Z_n)$. 
Let $m=\max_j m_j$, $\sigma=\max_j\sigma_j$, and $M=(m+\sqrt{2\Log(n/k)}\sigma)^\rho$. 
Then

\begin{align*}
\Bigl( \frac 1k \sum_{l=1}^k & \Ex (Z_l^*)^\rho \Bigr)^{1/\rho}
\le  2
\Bigl(  M 
+ \frac 1k \sum_{l=1}^n\Ex  \bigl(Z_l-m-\sqrt{2\Log(n/k)}\, \sigma\bigr)_{+}^\rho\Bigr)^{1/\rho}
\\ 
& \lesssim
\Bigl( M  
+ \frac 1k \sum_{l=1}^n\sigma^\rho\int_0^\infty\rho t^{\rho-1}\
\Pr\Bigl(Z_l\geq m+\sigma\bigl(t+\sqrt{2\Log(n/k)}\bigr)\Bigr)dt\Bigr)^{1/\rho}
\\
&  \le 
\Bigl( M  + 
\frac nk\sigma^\rho \int_{0}^\infty \rho t^{\rho -1}e^{-(t+\sqrt{2\Log(n/k)})^2/2}dt \Bigr)^{1/\rho}
\\ 
&  
\le \Bigl( M  +  
\sigma^\rho\int_{0}^\infty \rho t^{\rho -1} e^{-t^2/2}dt \Bigr)^{1/\rho}
\lesssim_{\rho_0}  m+\sqrt{\Log(n/k)}\, \sigma,
\end{align*}
so
\begin{align*}
\Ex\sup_{u\in B_2^n\cap cB_\infty^n}
\Bigl(\sum_{j=1}^n  u_j^2 Z_j^\rho\Bigr)^{1/\rho}
&\leq
\Ex\Bigl(\frac{1}{k}\sum_{l=1}^k(Z_l^*)^\rho\Bigr)^{1/\rho}
\lesssim_{\rho_0} m+\sqrt{\Log(n/k)}\, \sigma
\\
&\sim m+\sqrt{\Log(nc^2)}\, \sigma.
\qedhere
\end{align*}

\end{proof}

\begin{proof}[Proof of Proposition \ref{prop:slepianp*qbelow4gen}]
We apply Lemma~\ref{lem:KtimesL} with $K=B_{q^*}^m\cap a B_\infty^m$ and $L=B_{p}^n\cap b B_\infty^n$. 
Observe that $\|s\|_2\leq \|s\|_\infty^{(2-q^*)/2}\|s\|_{q*}^{q^*/2}$. This, together with a similar 
calculation for $\ell_p$-norms, yields
\begin{equation} \label{eq:slepianp*qbelow4gen-1}
K\subset a^{(2-q^*)/2}(B_2^m\cap a^{q^*/2}B_\infty^m),\quad
L\subset b^{(2-p)/2}(B_2^n\cap b^{p/2}B_\infty^n).
\end{equation}

Estimate \eqref{eq:VarYi} implies that $Y_1,\ldots,Y_m$ are centered Gaussians with 
$\mathrm{Var}(Y_i)\leq \|(a_{ij})_j\|_4^2$. Thus, \eqref{eq:slepianp*qbelow4gen-1} and 
Lemma~\ref{lem:ellqmaxk}, applied with $Z_j=|Y_j|$, $\rho=1$, $c=a^{q^*/2}$, $m_j=0$ and 
$\sigma_{i}=\|(a_{ij})_j\|_4$, yield  that
\begin{align*}
\Ex\sup_{s\in K}\sum_{i=1}^ms_i^2Y_i 
&
\leq a^{2-q^*}\Ex \sup_{s\in B_2^m\cap a^{q^*/2}B_\infty^m}\sum_{i=1}^ms_i^2|Y_i|
\\ 
&\lesssim  a^{2-q^*}\sqrt{\Log(ma^{q^*})}\max_{i}\|(a_{ij})_j\|_4
\\
&\leq 
a^{2-q^*}\sqrt{\Log(ma^{q^*})}\max_{i}\|(a_{ij})_j\|_{p^*}^{p^*/4}
\max_{i,j}|a_{ij}|^{(4-p^*)/4}
\\
&\leq \frac{p^*}{4} D_1
+\frac{4-p^*}{4}a^{4(2-q^*)/(4-p^*)}\Log^{2/(4-p^*)}(ma^{q^*})\max_{i,j}|a_{ij}|
\\
& \leq D_1+a^{(2-q^*)/2}\Log^{2/(4-p^*)}(ma^{q^*})\max_{i,j}|a_{ij}|.
\end{align*}
 
In a similar way we show that
\begin{align*}
\sup_{t\in L}\sum_{j=1}^n t_j^2Y_{m+j}
&\lesssim b^{2-p}\sqrt{\Log(nb^p)}\max_{j}\|(a_{ij})_i\|_4
\\
&\leq  D_2+b^{(2-p)/2}\Log^{2/(4-q)}(nb^p)\max_{i,j}|a_{ij}|.
\end{align*}

By \eqref{eq:slepianp*qbelow4gen-1} and the convexity of 
the function $x\mapsto |x|^{q/2}$  we get
\begin{align}
\notag
\sup_{s\in K,t\in L}\sum_{i=1}^m s_ig_i &\sqrt{\sum_{j=1}^n t_j^2a_{ij}^2}
  \leq \sup_{t\in  L}\Bigl(\sum_{i=1}^m |g_i|^q \Bigl(\sum_{j=1}^n t_j^2a_{ij}^2\Bigr)^{q/2} \Bigr)^{1/q}
\\ \notag
&\leq b^{(2-p)/2}\sup_{t\in B_2^n\cap b^{p/2}B_\infty^n}\Bigl(\sum_{i=1}^m |g_i|^q \Bigl(\sum_{j=1}^n t_j^2a_{ij}^2\Bigr)^{q/2} \Bigr)^{1/q}
\\ \notag
&
\leq b^{(2-p)/2}\sup_{t\in B_2^n\cap b^{p/2}B_\infty^n}
\Bigl(\sum_{j=1}^n t_j^2\|(a_{ij}g_i)_i\|_q^q\Bigr)^{1/q}
\Bigr( \sum_{j=1}^n t_j^2\Bigr)^{1/2-1/q}
\\	\label{eq:estimate-strong-term-slepian-gen}
&\leq b^{(2-p)/2}\sup_{t\in B_2^n\cap b^{p/2}B_\infty^n}
\Bigl(\sum_{j=1}^n t_j^2\|(a_{ij}g_i)_i\|_q^q\Bigr)^{1/q}.
\end{align}
Note that
\[
\Ex\|(a_{ij}g_i)_i\|_q\leq (\Ex\|(a_{ij}g_i)_i\|_q^q)^{1/q}\leq \sqrt{q}\|(a_{ij})_i\|_q,
\]
and
\[
\sup_{s\in B_2^n}\|(a_{ij}s_i)_i\|_q=\max_{i}|a_{ij}|.
\]
Therefore, the Gaussian concentration (see, e.g., \cite[Theorem~5.6]{BLM2013}) yields
\[
\Pr\bigl(\|(a_{ij}g_i)_i\|_q\geq \sqrt{q}\|(a_{ij})_i\|_q+t\max_{i}|a_{ij}|\bigr)\leq e^{-t^2/2}.
\]
Estimate~\eqref{eq:estimate-strong-term-slepian-gen} and Lemma \ref{lem:ellqmaxk}, applied with $Z_j=\|(a_{ij}g_i)_i\|_q$, $\rho = q$, $c=b^{p/2}$, and $\rho_0=4$, imply
\begin{align*}
\Ex\sup_{s\in K,t\in L}\sum_{i=1}^m s_ig_i & \sqrt{\sum_{j=1}^n t_j^2a_{ij}^2}
\lesssim \max_j\|(a_{ij})_i\|_q+b^{(2-p)/2}\sqrt{\Log(nb^p)}\max_{i,j}|a_{ij}|
\\
&\leq D_2+b^{(2-p)/2}\Log^{ 2/(4-q)}(nb^p)\max_{i,j}|a_{ij}|.
\end{align*}

In a similar way we show that 
\[
\Ex\sup_{s\in K,t\in L}\sum_{j=1}^n t_jg_j  \sqrt{\sum_{i=1}^m s_i^2a_{ij}^2}
\lesssim D_1
+a^{(2-q^*)/2}\Log^{ 2/(4-p^*)}(ma^{q^*})\max_{i,j}|a_{ij}|.
\qedhere
\]
\end{proof}

Now we go back to the proof of Proposition~\ref{prop:pqdA}. We will need a refinement of the arguments from the proof of the main result of \cite{APSS}.
Let
\[
K_{p,n}\coloneqq 
\conv \Bigl\{ \frac{1}{|J|^{1/p}}\bigl( \varepsilon_j \ind_{\{j\in J\}} \bigr)_{j=1}^n \colon 
J\subset\{1,\dots,n\}, J\neq \emptyset, (\varepsilon_j )_{j=1}^n\in \{-1,1\}^n\Bigr\}.
\]
Recall \cite[Lemma~2.3]{APSS}.
\begin{lem}
If $p\ge 1$, then $B_p^n \subset \ln(en)^{1/p^*} K_{p,n}$.
\end{lem}
Following the ideas from \cite{APSS} we obtain the following bound.

\begin{lem}
\label{lem:slepian-flat-small}
Assume that  $p^*,q  \geq 2$. Then
\begin{equation*}
\Ex\sup_{s\in K_{q^*,m}, t\in K_{p,n}} \sum_{i,j} a_{ij}g_{ij}s_it_j 
\lesssim   \sqrt{p^*}D_1 +\sqrt{q}D_2+\sqrt{\Log (mn)}\max_{i,j}|a_{ij}|.
\end{equation*}
\end{lem}

\begin{proof}
The proof of \cite[Proposition~3.1]{APSS}  (see estimate (3.6) and the last formula on 
page 3492 therein, based on the Slepian-Fernique lemma) shows that for every 
$1\le k\le m$ and $1\le l\le n$,
\begin{multline*}	
\Ex \max_{I, J} \sup_{\eta\in B_\infty^m}\sup_{\eta'\in B_\infty^n}
\sum_{i\in I, j\in J}  a_{ij} g_{ij} \eta_i\eta_j'
\lesssim
\max_{I, J}  \sum_{i\in I}  \sqrt{\sum_{j\in J} a_{ij}^2}
+  \max_{I, J} \sum_{j\in J} \sqrt{\sum_{i\in I} a_{ij}^2 }
\\
+  \Ex  \max_{I,J} \sum_{i\in I} g_i \sqrt{\sum_{j\in J} a_{ij}^2 }
+  \Ex  \max_{I,J} \sum_{j\in J} \widetilde{g}_j \sqrt{\sum_{i\in I} a_{ij}^2 },
\end{multline*}
where the maxima are taken over all sets
$I\subset \{1,\ldots,m\}$, $J\subset \{1,\ldots,n\}$ such that $|I| = k$, $|J| = l$, 
and $g_1,\ldots, g_m,  \widetilde{g}_1,\ldots, \widetilde{g}_n$ are independent 
standard Gaussian variables.
Moreover,
\begin{align*}	
\frac{1}{k^{1/q^*}l^{1/p}} \max_{I, J}  \sum_{i\in I}  \sqrt{\sum_{j\in J} a_{ij}^2} 
& \le \sup_{s\in B_{q^*}^m}\sup_{t\in B_{p}^n}  
\sum_{i=1}^m s_i  \sqrt{\sum_{j=1}^n t_j^2 a_{ij}^2} 
\\ & = \sup_{t\in B_{p}^n} \Bigl( \sum_{i=1}^m \Bigl(\sum_{j=1}^n t_j^2 a_{ij}^2 \Bigr)^{q/2} \Bigr)^{1/q}
=\max_{j}\|(a_{ij})_{ij}\|_q
= D_2.
\end{align*}
Similarly,
\[
\frac{1}{k^{1/q^*}l^{1/p}} \max_{I, J}  \sum_{j\in J}  \sqrt{\sum_{i\in I} a_{ij}^2} 
\le  D_1,
\]
and
\begin{align}	
\label{eq:maxlqcolumns}
\frac{1}{k^{1/q^*}l^{1/p}} 	\Ex \max_{I, J} \sum_{i\in I} g_i \sqrt{\sum_{j\in J} a_{ij}^2 }
\le \Ex \max_{j} \|(a_{ij}g_i)_i\|_q.
\end{align}	
Estimate~\eqref{eq:supgauss} yields  for every $q\in [2,\infty)$,
\begin{align*}
\Ex \max_{j\le n} \| (a_{ij}g_i)_i\|_q 
& \lesssim \ \max_{j\le n}  \Ex  \| (a_{ij}g_i)_i\|_q +
 \sqrt{\Log n}\max_{i,j}|a_{ij}|  
\\ 
& \le \ \max_{j\le n} \bigl( \Ex  \| (a_{ij}g_i)_i\|_q^q\bigr)^{1/q} +
 \sqrt{\Log n}\max_{i,j}|a_{ij}|  
\\ 
&
\le \sqrt{q}D_2+  \sqrt{\Log n}\max_{i,j}|a_{ij}| .	
\end{align*}
This and \eqref{eq:maxlqcolumns} yield
\begin{align*}	
\frac{1}{k^{1/q^*}l^{1/p}} 	\Ex \max_{I, J} \sum_{i\in I} g_i \sqrt{\sum_{j\in J} a_{ij}^2 }
\lesssim \sqrt{q}D_2+  \sqrt{\Log n}\max_{i,j}|a_{ij}| .	
\end{align*}
Similarly, 
\[
 \frac{1}{k^{1/q^*}l^{1/p}} 	\Ex \max_{I, J} \sum_{j\in J} \widetilde{g}_j \sqrt{\sum_{i\in I} a_{ij}^2 } 
\lesssim  \sqrt{p^*}D_1+ \sqrt{\Log m}\max_{ i,j}|a_{ij}|.
\]

This  implies that for every $k,l\ge 1$,
\begin{equation}	
\label{eq:slepian-flat-fixedsize}
\Ex W_{k,l} 
\lesssim  \sqrt{p^*}D_1 +\sqrt{q}D_2 +\sqrt{\Log (mn)}\max_{i,j}|a_{ij}|,
\end{equation}
 where
\[
W_{k,l} \coloneqq  \sup_{s\in K_{q^*,m}^{(k)}, t\in K_{p,n}^{(l)}} \sum_{i,j} a_{ij}g_{ij}s_it_j 
\] 
and 
\begin{align*}
K_{p,n}^{(l)}  \coloneqq \{l^{-1/p} (\eta_j\ind_{j\in J}) \colon \eta\in\{-1,1\}^n, J\subset[n], |J|=l\}.
\end{align*}
Hence,  by inequality \eqref{eq:supgauss}  we obtain
\begin{align*}	
\Ex  \sup_{s\in K_{q^*,m}, t\in K_{p,n}} & \sum_{i,j} a_{ij}g_{ij}s_it_j 
 =\Ex\max_{1\le k\le m, 1\le l\le n} W_{k,l}	
\\ 
&\lesssim	\max_{1\le k\le m, 1\le  l\le n} \Ex W_{k,l} 
+\sqrt{\Log(mn)} \sup_{ s\in B_{q^*}^m, t\in B_p^n} \sqrt{ \sum_{i,j}a_{ij}^2 s_i^2 t_j^2}
\\
& = \max_{ 1\le k\le m,  1\le  l\le n} \Ex W_{k,l} 
+\sqrt{\Log(mn)} \max_{i,j}|a_{ij}|.
\end{align*}
 Thus, \eqref{eq:slepian-flat-fixedsize} yields the assertion.
\end{proof}

As in Section~\ref{sect:degdepbounds}, we estimate 
$\Ex\|G_A\|_{p\to q}$ using Propostion~\ref{prop:redtosupflat}, so we need to upper bound 
$\Ex Y_{k,l}(d_A^\ve)$. The rest of this section is devoted to do so.
We begin with an easy consequence of  Lemma~\ref{lem:slepian-flat-small} and 
Proposition~\ref{prop:redtosupflat}.

\begin{cor}
\label{cor:logdA}
If $2\le p^*,q<\infty$, then
\begin{multline*}
\Ex \|G_A\|_{p\to q}
\\ \lesssim \Log(d_A)
\Bigl(\sqrt{p^*}\max_{i}\|(a_{ij})_j\|_{p^*}+\sqrt{q}\max_{j}\|(a_{ij})_i\|_{q}+\sqrt{\Log (nm)}\max_{i,j}|a_{ij}| \Bigr).
\end{multline*}
\end{cor}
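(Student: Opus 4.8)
The plan is to apply Proposition~\ref{prop:redtosupflat} with the tuned parameter $\ve:=(\Log d_A)^{-1}\in(0,1]$, which is precisely the choice that makes the flatness parameter $d_A^\ve$ appearing in the quantities $Y_{k,l}(d_A^\ve)$ a universal constant: indeed $d_A^{1/\Log d_A}\le e$ in all cases. Estimate \eqref{eq:redtosupflat2} then reads
\[
\Ex\|G_A\|_{p\to q}\lesssim \Log(d_A)\Bigl(\max_{1\le k\le m}\max_{1\le l\le n}\Ex Y_{k,l}(d_A^\ve)+\sqrt{\Log(mn)}\max_{i,j}|a_{ij}|\Bigr),
\]
so it will remain to bound $\Ex Y_{k,l}(\gamma)$, uniformly in $k,l$, by $\sqrt{p^*}D_1+\sqrt{q}D_2+\sqrt{\Log(mn)}\max_{i,j}|a_{ij}|$ for every $\gamma\le e$ (and $Y_{k,l}$ is non-decreasing in $\gamma$, so it suffices to treat $\gamma=e$).

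For this last point the key observation is that when $\gamma\le e$ the $\gamma$-flat vectors are, up to the absolute constant $e$, already generators of the sets $K_{q^*,m,1}$ and $K_{p,n,1}$. Precisely, if $s\in K(q^*,m,I,\gamma)$ with $|I|=k$, then $\|s\|_\infty\le\gamma k^{-1/q^*}\le e\, k^{-1/q^*}$ (see the remark following the definition of $K(\cdot)$), so $s/e$ lies in the box $\{w\colon \supp(w)\subseteq I,\ \|w\|_\infty\le k^{-1/q^*}\}$, whose vertices $k^{-1/q^*}(\eta_i\ind_{i\in I})$ are exactly generators of $K_{q^*,m,1}$ (take the index set to be $I$ itself). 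Hence $K(q^*,m,I,\gamma)\subseteq e\,K_{q^*,m,1}$ for every $I\in\mathcal{I}_4(k)$, and likewise $K(p,n,J,\gamma)\subseteq e\,K_{p,n,1}$. Dropping the restriction of the summation to $i\in I$, $j\in J$ (which only increases the supremum), we get, for all $k,l$ and all $\gamma\le e$,
\[
Y_{k,l}(\gamma)\le e^2\sup_{s\in K_{q^*,m,1},\,t\in K_{p,n,1}}\sum_{i,j}a_{ij}g_{ij}s_it_j .
\]

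It then suffices to invoke Lemma~\ref{lem:slepian-flat-small} with $a=b=1$ (valid for $m,n\ge 2$; the cases $m=1$ or $n=1$ are elementary, or one reduces to $m,n\ge 2$ by padding $A$ with zero rows and columns, which changes neither side of the asserted bound by more than a constant), obtaining
\[
\Ex\sup_{s\in K_{q^*,m,1},\,t\in K_{p,n,1}}\sum_{i,j}a_{ij}g_{ij}s_it_j\lesssim \sqrt{p^*}D_1+\sqrt{q}D_2+\sqrt{\Log(mn)}\max_{i,j}|a_{ij}| .
\]
Combining the three displays and using $\Log(d_A)\ge 1$ yields the claim. The only subtle point is the balancing: a constant $\ve$ would leave a factor $\Log^2(d_A)$ — one power from the $1/\ve$ in Proposition~\ref{prop:redtosupflat}, one from resolving a $\gamma$-flat vector into the $\sim\log\gamma$ genuinely flat pieces it decomposes into — while the choice $\ve=(\Log d_A)^{-1}$ collapses the flatness to $O(1)$ and leaves a single $\Log(d_A)$; the box argument above is exactly what keeps the bound on $Y_{k,l}(\gamma)$ free of any spurious power of $k$ or of $\Log(mn)$.
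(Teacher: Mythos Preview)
Your proof is correct and follows essentially the same approach as the paper: choose $\ve=1/\Log(d_A)$ so that $d_A^\ve\le e$, observe that $K(q^*,m,I,\gamma)\subset\gamma K_{q^*,m,1}$ and $K(p,n,J,\gamma)\subset\gamma K_{p,n,1}$ (the paper phrases this as the equivalent linear-form inequality, you verify it via the box argument), hence $Y_{k,l}(d_A^\ve)\le d_A^{2\ve}\sup_{s\in K_{q^*,m,1},\,t\in K_{p,n,1}}\sum a_{ij}g_{ij}s_it_j$, and then apply Lemma~\ref{lem:slepian-flat-small} with $a=b=1$ together with Proposition~\ref{prop:redtosupflat}. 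Your remark that the summation restriction to $i\in I$, $j\in J$ can be ``dropped'' is harmless but superfluous, since $\supp(s)=I$ and $\supp(t)=J$ already force the full sum to coincide with the restricted one.
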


\begin{proof}
If $I\subset [m]$, $J\subset [n]$, and $\gamma>1$, then for every  $(b_i)_{i=1}^m$ and 
$(c_{j})_{j=1}^n$,
\[
\sup_{s\in  K(q^*,m,I,\gamma)}\sum_{i=1}^m b_is_i\leq \gamma\sup_{s\in K_{q^*,m}}\sum_{i=1}^m b_is_i
\]
and
\[
\sup_{t\in  K(p,n,J,\gamma)}\sum_{j=1}^n c_jt_j\leq \gamma\sup_{t\in K_{p,n}}\sum_{j=1}^n c_jt_j.
\]
Hence,
\[
Y_{k,l}(d_A^\ve)\leq d_A^{2\ve}\sup_{s\in K_{ q^*,m}}\sup_{t\in K_{ p,n}}
\sum_{i,j}a_{ij}g_{ij}s_it_j.
\]
We choose $\ve =1/\Log(d_A)$ and apply Lemma~\ref{lem:slepian-flat-small} and Proposition~\ref{prop:redtosupflat}.
\end{proof}

To derive  better bounds for $\Ex Y_{k,l}(d_A^\ve)$ we  first need to introduce  some
additional notation. For $I\subset [m]$ and $\alpha\geq 0$ define
\[
B(p,I,\alpha)=B(p,I,\alpha,A)\coloneqq \Bigl\{t\in B_p^n\colon\ \max_{i\in I}\sum_{j=1}^na_{ij}^2t_j^2\leq \alpha^2\Bigr\},
\]
and for $I\subset [m]$ and $J\subset [n]$ let 
\[
	I''=\{i\in [m]\colon\ (i,j)\in E_A \text{ for some }   j\in I'\} 
\]
 and similarly
 \[
  J''=\{j\in [n]\colon\   (i,j)\in E_A  \text{ for some } i\in J'  \}.
\]
Without loss of generality we may assume that matrix $A$ has no zero rows and columns, so 
$I\subset I''$ and $J\subset J''$ for any $I\subset [m]$ and $J\subset [n]$.

\begin{lem}
\label{lem:decomp}
For every  $p\leq 2$, $1\leq r\leq k\leq m$ and $1\leq l\leq n$ we have 
\begin{align}	
\notag
\Ex& Y_{k,l}(d_A^\ve)
\\ \notag
& \leq \Ex\max_{I\in \mathcal{I}_4(k)}\max_{I_0\subset I,|I_0|=r}\max_{J\in \mathcal{J}_4(l)}\
\sup_{s\in K(q^*,m,I,d_A^\ve)}\ \sup_{t\in K(p,n,J,d_A^\ve)}
\sum_{i\in I_0''\cap I,j\in I_0'\cap J}a_{ij}g_{ij}s_it_j
\\ \label{eq:decomp}
&\qquad+\Ex \max_{I\in \mathcal{I}_4(k)}\sup_{s\in B_{q^*}^m}\sup_{t\in B(p,I,\alpha_r)}\sum_{i\in I,j\in [n]}a_{ij}g_{ij}s_it_j,
\end{align}
where
\[
\alpha_r=\min\{1,d_A^\ve l^{1/2-1/p}r^{-1/2}\}\max_{i,j}|a_{ij}|.
\]

\end{lem}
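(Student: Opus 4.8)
The plan is to fix $I\in\mathcal{I}_4(k)$, $J\in\mathcal{J}_4(l)$, $s\in K(q^*,m,I,d_A^\ve)$, and $t\in K(p,n,J,d_A^\ve)$, and to split the sum $\sum_{i\in I,j\in J}a_{ij}g_{ij}s_it_j$ into two parts according to whether the ``column energy'' of the row $i$ on the support of $t$ is large or small. Concretely, for a threshold $\alpha>0$ let $I_0=I_0(I,t,\alpha):=\{i\in I\colon \sum_{j=1}^n a_{ij}^2t_j^2>\alpha^2\}$; this is the set of ``heavy'' rows. I would then write
\begin{align*}
\sum_{i\in I,j\in J}a_{ij}g_{ij}s_it_j
&=\sum_{i\in I_0,j\in J}a_{ij}g_{ij}s_it_j+\sum_{i\in I\setminus I_0,j\in J}a_{ij}g_{ij}s_it_j .
\end{align*}
For the second (``light'') term, every row $i\in I\setminus I_0$ satisfies $\sum_j a_{ij}^2t_j^2\le\alpha^2$, and since $\|s\|_{q^*}\le 1$ and (by restricting $t$ to $J$) $t$ still lies in $B_p^n$, this piece is bounded by $\sup_{s\in B_{q^*}^m}\sup_{t\in B(p,I,\alpha)}\sum_{i\in I,j\in[n]}a_{ij}g_{ij}s_it_j$ — exactly the second summand on the right-hand side of \eqref{eq:decomp} (after taking $\max$ over $I$ and $\Ex$). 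Note one must allow the full $t\in B(p,I,\alpha)$ and $s\in B_{q^*}^m$ here, which only enlarges the supremum.

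The first term requires a counting argument to pin down $|I_0|$. Since $t\in K(p,n,J,d_A^\ve)$ we have $|t_j|\le d_A^\ve l^{-1/p}$ for all $j$, hence for any $i$, $\sum_j a_{ij}^2 t_j^2\le d_A^{2\ve}l^{-2/p}\max_{i,j}a_{ij}^2\cdot|\{j\in J\colon a_{ij}\ne0\}|\le d_A^{2\ve}l^{-2/p}\max_{i,j}a_{ij}^2\cdot l^{\,2/p-1}\cdot l^{1-2/p+\ldots}$; more carefully, summing over $i\in I_0$ and using $\|t\|_p\le1$ together with $2/p\ge1$ gives a bound of the form $|I_0|\alpha^2\le\sum_{i\in I_0}\sum_j a_{ij}^2t_j^2\le d_A^{2\ve}l^{1-2/p}\max_{i,j}a_{ij}^2$ (here one uses that $\sum_{j\in J}t_j^2\le|J|^{1-2/p}\|t\|_p^2\le l^{1-2/p}$ by Hölder, since $J$ has cardinality $l$, and that each $i$ hits at most all of $J$). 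Choosing $\alpha=\alpha_r$ as in the statement makes $|I_0|\le r$. Since the rows in $I_0$ are only connected (through columns in $I_0'\cap J$) to columns in $I_0'$, and the relevant columns of $J$ meeting $I_0$ lie in $I_0'\cap J$ while the rows actually appearing are in $I_0''\cap I$, the heavy part is dominated by
\[
\max_{I_0\subset I,|I_0|=r}\ \sup_{s\in K(q^*,m,I,d_A^\ve)}\ \sup_{t\in K(p,n,J,d_A^\ve)}\sum_{i\in I_0''\cap I,\,j\in I_0'\cap J}a_{ij}g_{ij}s_it_j,
\]
which after taking $\max$ over $I,J$ and $\Ex$ is the first summand of \eqref{eq:decomp}. (If $|I_0|<r$ one may harmlessly enlarge $I_0$ to have exactly $r$ elements.)

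The main obstacle I expect is getting the combinatorial bookkeeping of the index sets exactly right: one must verify that restricting the heavy sum to $i\in I_0''\cap I$ and $j\in I_0'\cap J$ loses nothing (every edge $(i,j)$ with $i\in I_0$, $j\in J$, $a_{ij}\ne0$ has $j\in I_0'$ and then $i\in I_0''$, so the restriction is in fact an equality on the support of $A$), and that the Hölder/counting inequality yielding $|I_0|\le r$ is valid for all $p\in[1,2]$, the borderline being $p=2$ where $l^{1-2/p}=1$ and the bound reads $|I_0|\alpha_r^2\le d_A^{2\ve}\max_{i,j}a_{ij}^2$, consistent with $\alpha_r=\min\{1,d_A^\ve l^{1/2-1/p}r^{-1/2}\}\max_{i,j}|a_{ij}|$. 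Everything else — monotonicity of suprema under enlarging the index set, splitting $\Ex\max$ of a sum by the triangle inequality — is routine.
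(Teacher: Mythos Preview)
Your counting argument for $|I_0|\le r$ does not work. You claim
\[
|I_0|\alpha_r^2\le\sum_{i\in I_0}\sum_j a_{ij}^2t_j^2\le d_A^{2\ve}l^{1-2/p}\max_{i,j}a_{ij}^2,
\]
but the second inequality is false in general: the double sum is essentially $\sum_{j\in J}t_j^2\sum_{i\in I_0}a_{ij}^2$, and there is no way to remove the factor $|I_0|$ (or at best $d_{2,A}$) coming from the inner sum. For instance, if $A$ has all entries equal to $1$ on $I\times J$ and $t_j=l^{-1/p}$ for $j\in J$, then every row of $I$ is ``heavy'' whenever $r>d_A^{2\ve}$, and $|I_0|=k$. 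The H\"older step you cite goes the wrong way for $p\le 2$; the bound $\sum_j t_j^2\le d_A^{2\ve}l^{1-2/p}$ is correct (from $\|t\|_\infty\le d_A^\ve l^{-1/p}$), but this only gives $\sum_{i\in I_0}\sum_j a_{ij}^2t_j^2\le |I_0|\,d_A^{2\ve}l^{1-2/p}\max_{i,j}a_{ij}^2$, which is circular. There is also a secondary problem with your light part: since $\sum_j a_{ij}^2t_j^2>\alpha_r^2$ for $i\in I_0$, the vector $t$ does not belong to $B(p,I,\alpha_r)$, so the light sum is not dominated by the second term on the right-hand side of \eqref{eq:decomp}.

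The paper's proof proceeds differently. Rather than thresholding rows, it fixes $I$ and $J$ and builds $I_0\subset I$ of size exactly $r$ by a greedy algorithm: repeatedly pick the row of $I$ with the most neighbours in the uncovered part of $J$. This guarantees that every $i\in I\setminus I_0$ has at most $l/r$ neighbours in $J\setminus I_0'$. The split is then by \emph{columns}: $j\in I_0'\cap J$ versus $j\in J\setminus I_0'$. The first piece is exactly $\sum_{i\in I_0''\cap I,\,j\in I_0'\cap J}a_{ij}g_{ij}s_it_j$. For the second piece one restricts $t$ to $J\setminus I_0'$; this restricted vector lies in $B(p,I,\alpha_r)$ because rows in $I_0$ have no nonzero entries outside $I_0'$, while rows in $I\setminus I_0$ have at most $l/r$ such entries, giving $\sum_{j\in J\setminus I_0'}a_{ij}^2t_j^2\le (l/r)\,d_A^{2\ve}l^{-2/p}\max_{i,j}a_{ij}^2=\alpha_r^2$. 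Both obstacles above are thus avoided by the greedy construction and the column-wise split.
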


\begin{proof}
Let us fix $I\in \mathcal{I}_4(k)$ and $J\in \mathcal{J}_4(l)$ and
consider the following greedy algorithm with output being a subset $I_0=\{i_1,\ldots,i_r\}$ 
of $I$ of size $r$.
\begin{itemize}
\item	In the first step we pick a vertex $i_1\in I$ with maximal number of neighbours in $J$.
\item  Once we chose $\{i_1,\ldots,i_u\}$ for $u<r$, we pick  
$i_{u+1}\in I\setminus \{i_1,\ldots,i_u\}$ with maximal number of neighbours in  
$J\setminus \{i_1,\ldots,i_u\}'$.
\end{itemize}
If $l_u$ denotes the number of neighbours of $i_u$ in $J\setminus \{i_1,\ldots,i_{u-1}\}'$, 
then $l_1\ge l_2\ge \ldots \ge l_r$, so $rl_r \le |J|=l$.
Hence, using this algorithm  we get a subset $I_0\subset I$ with
cardinality $r$ such that for every $i\in I\setminus I_0$, 
$|\{ j\in J\setminus I_0'\colon (i, j)\in E_A \}|\leq l/r.$

Observe that if $i\in I$ and $j\in I_0'\cap J$ are such that $a_{ij}\neq 0$, then 
$i\in I_0''\cap I$,  and if $i\in I$ and $j\in J\setminus I_0'$ are such that $a_{ij}\neq 0$, then $i\in I\setminus I_0$.
Hence, for any $s\in B_{q^*}^m$ and $t\in B_p^n$,
\[
\sum_{i\in I,j\in J}a_{ij}g_{ij}s_it_j
=\sum_{i\in I_0''\cap I,j\in I_0'\cap J}a_{ij}g_{ij}s_it_j
+\sum_{i\in I\setminus I_0,j\in J\setminus I_0'}a_{ij}g_{ij}s_it_j.
\]
Moreover, for any $i\in I\setminus I_0$ and $t\in K(p,n,J,d_A^\ve)$,
\[
\sum_{j\in J\setminus I_0'}a_{ij}^2t_j^2
\leq \max_{i,j}a_{ij}^2\min\Bigl\{1,\frac{l}{r}\max_{j\in J}t_j^2\Bigr\}
\leq \max_{i,j}a_{ij}^2\min\{1,d_A^{2\ve}l^{1-2/p}r^{-1}\}=\alpha_r^2. 
\]
Therefore,
\begin{multline*}
\max_{I\in \mathcal{I}_4(k)}\max_{J\in \mathcal{J}_4(l)}\
 \sup_{s\in K(q^*,m,I,d_A^\ve)}\ \sup_{t\in K(p,n,J,d_A^\ve)}
\sum_{i\in I\setminus I_0,j\in J\setminus I_0'}a_{ij}g_{ij}s_it_j
\\
\leq 
\max_{I\in \mathcal{I}_4(k)}\sup_{s\in B_{q^*}^m}\sup_{t\in B(p,I,\alpha_r)}\sum_{i\in I,j\in [n]}a_{ij}g_{ij}s_it_j.
\qedhere
\end{multline*}
\end{proof}

We begin by estimating the second term on the right-hand side of \eqref{eq:decomp}.

\begin{lem}
\label{lem:outsideI0}
For $p^*,q\in [2,\infty)$, $1\leq k\leq m$, and $\alpha \in[0,1]$ we have
\begin{multline*}
\Ex \max_{I\in \mathcal{I}_4(k)}\sup_{s\in B_{q^*}^m}\sup_{t\in B(p,I,\alpha)}\sum_{i\in I,j\in [n]}a_{ij}g_{ij}s_it_j
\\
\lesssim \sqrt{p^*}D_1+\alpha\bigl(\sqrt{k\Log d_A}+\sqrt{\Log m}\bigr).
\end{multline*}
\end{lem}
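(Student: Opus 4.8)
The plan is to estimate the expected maximum by a two-stage chaining argument: for each fixed $I\in\mathcal{I}_4(k)$ I first handle the supremum over $s$ with a $\tfrac12$-net, and then I handle the maximum over $I$ using the cardinality bound \eqref{eq:size-Ir(k)}. The whole computation rests on the fact that the auxiliary set $B(p,I,\alpha)$ was tailored precisely so that the relevant Gaussian process has all its variances bounded by $\alpha^2$.

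First I would fix $I\in\mathcal{I}_4(k)$ and write $X_I:=\sup_{s\in B_{q^*}^m}\sup_{t\in B(p,I,\alpha)}\sum_{i\in I,j}a_{ij}g_{ij}s_it_j$; since only the coordinates $s_i$ with $i\in I$ enter the sum, the supremum over $s$ is effectively over $B_{q^*}^I$. Choosing a $\tfrac12$-net $T$ in $B_{q^*}^I$ of cardinality at most $5^{k}$ and noting that $s\mapsto\sup_{t\in B(p,I,\alpha)}\sum_{i\in I,j}a_{ij}g_{ij}s_it_j$ is a seminorm (the support function of the symmetric convex set $\{(\sum_{j}a_{ij}g_{ij}t_j)_{i\in I}\colon t\in B(p,I,\alpha)\}$), the usual successive-approximation argument gives $X_I\le 2\sup_{s\in T}\sup_{t\in B(p,I,\alpha)}\sum_{i\in I,j}a_{ij}g_{ij}s_it_j$. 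Then I would apply inequality \eqref{eq:supgauss} to the right-hand side, splitting the index set according to $s\in T$: for $s\in B_{q^*}^I\subset B_2^I$ and $t\in B(p,I,\alpha)$ one has $\sum_{i\in I,j}a_{ij}^2s_i^2t_j^2\le\|s\|_2^2\max_{i\in I}\sum_{j}a_{ij}^2t_j^2\le\alpha^2$, while for a fixed $s$ inequality \eqref{eq:fixeds} gives $\Ex\sup_{t\in B(p,I,\alpha)}\sum_{i\in I,j}a_{ij}g_{ij}s_it_j\le\Ex\sup_{t\in B_p^n}\sum_{i,j}a_{ij}g_{ij}s_it_j\le\sqrt{p^*}D_1$. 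Since $\Log|T|\lesssim k$, this yields $\Ex X_I\lesssim\sqrt{p^*}D_1+\sqrt{k}\,\alpha$.

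In the second stage I would apply \eqref{eq:supgauss} once more, this time to the centered Gaussian process $Z_{(I,s,t)}=\sum_{i\in I,j}a_{ij}g_{ij}s_it_j$ with its index set partitioned according to $I\in\mathcal{I}_4(k)$. The variance of each $Z_{(I,s,t)}$ is again at most $\alpha^2$ by the same computation, and $|\mathcal{I}_4(k)|\le m4^kd_A^{4k}$ by \eqref{eq:size-Ir(k)}, so $\Log|\mathcal{I}_4(k)|\lesssim\Log m+k\Log d_A$ (using $\Log d_A\ge 1$). Consequently
\[
\Ex\max_{I\in\mathcal{I}_4(k)}X_I\lesssim\max_{I\in\mathcal{I}_4(k)}\Ex X_I+\alpha\sqrt{\Log|\mathcal{I}_4(k)|}\lesssim\sqrt{p^*}D_1+\sqrt{k}\,\alpha+\alpha\bigl(\sqrt{\Log m}+\sqrt{k\Log d_A}\bigr),
\]
and since $\sqrt{k}\le\sqrt{k\Log d_A}$ the assertion follows.

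The argument is essentially routine; the only thing demanding attention is the bookkeeping in the two applications of \eqref{eq:supgauss} — in particular checking that at each stage the process is centered Gaussian and that the constraint $t\in B(p,I,\alpha)$ uniformly controls all standard deviations by $\alpha$, which is exactly the reason for introducing $B(p,I,\alpha)$. The minor technical point of justifying the net reduction for a seminorm rather than a norm is entirely standard.
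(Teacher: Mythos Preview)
Your proof is correct and follows essentially the same approach as the paper: a $\tfrac12$-net in $B_{q^*}^I$, the variance bound $\sum_{i\in I,j}a_{ij}^2s_i^2t_j^2\le\alpha^2$ coming from the definition of $B(p,I,\alpha)$, inequality \eqref{eq:fixeds} for the fixed-$s$ expectation, and the cardinality estimate \eqref{eq:size-Ir(k)}. The only cosmetic difference is that the paper merges the nets $T_I$ over all $I\in\mathcal{I}_4(k)$ into a single set $T$ (replacing the $I$-dependent constraint $t\in B(p,I,\alpha)$ by the $s$-dependent one $\sum_{i,j}a_{ij}^2s_i^2t_j^2\le\alpha^2$) and applies \eqref{eq:supgauss} once, whereas you apply it in two stages; the resulting bounds coincide.
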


\begin{proof}
Observe that for any $I\in \mathcal{I}_4(k)$, $s\in B_{q^*}^m\subset B_2^m$, and 
$t\in B(p,I,\alpha)$ we have
\[
\sum_{i\in I,j\in [n]}a_{ij}^2s_i^2t_j^2\leq \max_{i\in I}\sum_{j}a_{ij}^2t_j^2\leq \alpha^2.
\]

For $s\in B_{q^*}^m$ define
\[
D(p,s,\alpha)\coloneqq \Bigl\{t\in B_{p}^n\colon \sum_{i,j}a_{ij}^2s_i^2t_j^2\leq \alpha^2\Bigr\}.
\]
For $I\in\mathcal{I}_4(k)$ let us choose a $1/2$-net $S_I$ in 
$B_{q^*}^I=\{s\in B_{q^*}^m\colon \mathrm{supp}(s)\subset I\}$ (in $\ell_{q^*}$-norm) 
of cardinality at most $5^k$ and put $S\coloneqq \bigcup_{I\in\mathcal{I}_4(k)}S_I$. 
Then
\begin{align*}
\max_{I\in \mathcal{I}_4(k)}\sup_{s\in B_{q^*}^m}\sup_{t\in B(p,I,\alpha)}\sum_{i\in I,j\in [n]}a_{ij}g_{ij}s_it_j
&\leq
2\max_{I\in \mathcal{I}_4(k)}\max_{s\in S_I}\sup_{t\in B(p,I,\alpha)}\sum_{i\in I,j\in [n]}a_{ij}g_{ij}s_it_j
\\
&\leq
2\max_{s\in S}\sup_{t\in D(p,s,\alpha)}\sum_{i,j}a_{ij}g_{ij}s_it_j.
\end{align*}
Thus,  estimate~\eqref{eq:supgauss}  implies
\begin{multline*}
\Ex\max_{I\in \mathcal{I}_4(k)}\sup_{s\in B_{q^*}^m}\sup_{t\in B(p,I,\alpha)}
\sum_{i\in I,j\in [n]}a_{ij}g_{ij}  s_it_j
\lesssim \max_{s\in S}\Ex\sup_{t\in D(p,s,\alpha)}\sum_{i,j}a_{ij}g_{ij}s_it_j
\\
 +\sqrt{\Log  |S|}\max_{s\in S}\sup_{t\in D(p,s,\alpha)}
 \Bigl(\sum_{i,j}a_{ij}^2s_i^2t_j^2\Bigr)^{1/2}.
\end{multline*}

For any fixed $s\in S\subset B_{q^*}^m$  estimate \eqref{eq:fixeds} yields
\[
\Ex\sup_{t\in D(p,s,\alpha)}\sum_{i,j}a_{ij}g_{ij}s_it_j
\leq \Ex\sup_{t\in B_p^n}\sum_{i,j}a_{ij}g_{ij}s_it_j
\leq \sqrt{p^*}D_1.
\]
Moreover, by \eqref{eq:size-Ir(k)}, $|S|\leq 5^k|\mathcal{I}_4(k)|\leq m20^kd_A^{4k}$, so
$\sqrt{\Log   |S|}\lesssim \sqrt{\Log m}+\sqrt{k\Log(d_A)}$. Finally, 
\[
\max_{s\in S}\sup_{t\in  D(p,s,\alpha)}\Bigl(\sum_{i,j}a_{ij}^2s_i^2t_j^2\Bigr)^{1/2}\leq \alpha.
\qedhere
\]
\end{proof}

Now we estimate the first term on the right-hand side of \eqref{eq:decomp}.

\begin{lem}
\label{lem:I_0r}
If $l\geq r$, $\ve\in (0,1/2]$, and $ p^*, q\in[2,4)$, then
\begin{align*}
\Ex&\max_{I\subset [m]}\max_{I_0\subset I,|I_0|=r}\max_{J\in \mathcal{J}_4(l)}\
\sup_{s\in K(q^*,m,I,d_A^\ve)}\ \sup_{t\in K(p,n,J,d_A^\ve)}
\sum_{i\in I_0''\cap I,j\in I_0'\cap J}a_{ij}g_{ij}s_it_j
\\
&\lesssim
\ve^{-1}\Bigl[ D_1+D_2
\\ 
&\quad
+\Bigl((5\Log(d_A))^{2/(4-p^*\vee q)}+\sqrt{\Log(mn)}+\sqrt{\Log d_A}d_A^{1/2+3\ve}\Bigl(\frac rl\Bigr)^{1/p}\Bigr)\max_{i,j}|a_{ij}| \Bigr].
\end{align*}
\end{lem}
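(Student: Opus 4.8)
The plan is to reduce the double supremum over flat vectors to a supremum over a controllable family of index pairs, and then to apply Proposition~\ref{prop:slepianp*qbelow4gen} on each resulting sub-block. First I would observe that the bipartite graph structure severely restricts the relevant blocks: since $|I_0|=r$ and the graph $G_A$ has degree at most $d_A$, the set $I_0'$ has size at most $rd_A$, and $I_0''$ has size at most $rd_A^2$; likewise the nonempty $4$-connected sets $J\in\mathcal J_4(l)$ that can interact with $I_0'$ are constrained. Using the counting bound \eqref{eq:size-Ir(k)} (and the elementary bound $\binom{k}{r}\le 2^k$ for the choice of $I_0$ inside $I$), the number of admissible triples $(I,I_0,J)$ entering the maximum is at most $mn\,C^{k+l}d_A^{O((k+l)+r)}$, so its logarithm is $\lesssim \Log(mn)+ (k+l+r)\Log d_A$. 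The key point — which is why the $d_A^{1/2+3\ve}(r/l)^{1/p}$ term appears — is that on $I_0''\cap I$ and $I_0'\cap J$ the flatness constraint $s\in K(q^*,m,I,d_A^\ve)$ forces $|s_i|\le d_A^\ve k^{-1/q^*}$ and similarly $|t_j|\le d_A^\ve l^{-1/p}$, while the support sizes of the relevant coordinates are at most $|I_0''|\le rd_A^2$ and $|I_0'|\le rd_A$; feeding these into the $\ell_\infty$-truncation radii gives effective radii of order $a=d_A^{\ve}(rd_A^2)^{1/q^*-1/2}$ and $b=d_A^{\ve}(rd_A)^{1/p-1/2}$ relative to the rescaled $\ell_{q^*}$, $\ell_p$ balls on those supports.

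Next I would apply Proposition~\ref{prop:slepianp*qbelow4gen} to the matrix restricted to $(I_0''\cap I)\times(I_0'\cap J)$ with these truncation parameters $a,b$, conditioning on nothing (the $g_{ij}$ there are independent of everything). The $D_1,D_2$ terms are dominated by the global $D_1,D_2$ since restricting rows/columns only decreases the row/column norms. The error term in Proposition~\ref{prop:slepianp*qbelow4gen} contributes $\bigl(a^{(2-q^*)/2}\Log^{2/(4-p^*)}(\cdot)+b^{(2-p)/2}\Log^{2/(4-q)}(\cdot)\bigr)\max_{i,j}|a_{ij}|$; the logarithmic arguments are at most $d_A^{O(1)}$, so these logarithmic factors are $\lesssim(\Log d_A)^{2/(4-p^*\vee q)}$, while the powers $a^{(2-q^*)/2}$ and $b^{(2-p)/2}$ are bounded by $d_A^{O(\ve)}\cdot d_A^{O(1)\cdot(\text{small})}\cdot r^{\pm\text{(something)}}$. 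Tracking the exponents carefully, the worst contribution is of the form $d_A^{1/2+3\ve}(r/l)^{1/p}$ — here the factor $(r/l)^{1/p}$ comes precisely from the ratio between the $\ell_p$-flatness scale $l^{-1/p}$ imposed by $J$ and the support size controlled by $r$. Then I would pass from the maximum to the expectation of the maximum using inequality \eqref{eq:supgauss} of Lemma~\ref{lem:supgauss}, whose Lipschitz term $\sup_{s,t}(\sum a_{ij}^2 s_i^2 t_j^2)^{1/2}\le\max_{i,j}|a_{ij}|$ contributes the $\sqrt{\Log(mn)}$ and $\sqrt{\Log d_A}\,\sqrt{k+l+r}$ factors; combined with the per-block bound (which already contains a $\sqrt{\text{something}}\,\max|a_{ij}|$ from Slepian on each block) this yields the stated right-hand side, the $\ve^{-1}$ prefactor arising exactly as in Proposition~\ref{prop:redtosupflat} / Corollary~\ref{cor:uptodAepsiloncase1} from the $O(1/\ve)$ scales needed to cover the truncation levels.

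The main obstacle I expect is bookkeeping the exponents so that the $d_A$-powers collapse to $d_A^{1/2+3\ve}(r/l)^{1/p}$ rather than something larger. The delicate point is that $|I_0''|$ can be as large as $rd_A^2$, so a naive use of $a^{(2-q^*)/2}$ with $a\sim(rd_A^2)^{1/q^*-1/2}d_A^\ve$ could produce a power like $d_A^{(2-q^*)}$ which, for $q^*$ close to $2$, is harmless, but for $q^*$ away from $2$ needs to be balanced against the $\Log^{2/(4-q^*)}$ factor and against the flatness on the $t$-side; similarly the $\sqrt{\Log d_A}$ coming from the Gaussian-max step over $d_A^{O(k+l+r)}$ sets must be absorbed. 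The way around this is to split into the two regimes according to whether the $\min\{1,\cdot\}$ in $\alpha_r$ (and the analogous min's hidden in $a,b$) is active, exactly mirroring the two-case analysis in the proof of Corollary~\ref{cor:uptodAepsiloncase4}; in each regime one of the two truncation radii is simply $\le 1$ and the other carries a controlled power. Once the regimes are separated, each estimate is a routine application of Proposition~\ref{prop:slepianp*qbelow4gen} followed by \eqref{eq:supgauss}, and summing the (geometrically decaying in the number of $1/\ve$-many scales) contributions gives the $\ve^{-1}$ factor and the claimed bound.
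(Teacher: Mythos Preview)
Your approach has a genuine gap at the counting step. In the statement, $I$ ranges over \emph{all} subsets of $[m]$ (there is no parameter $k$ in this lemma), and even if one argues that only the coordinates of $s$ on $I_0''\cap I$ matter, the set $I_0$ itself is an arbitrary $r$-element subset of $[m]$: there are $\binom{m}{r}$ of these, with $\log\binom{m}{r}\sim r\Log m$, not $r\Log d_A$. Feeding that entropy into \eqref{eq:supgauss} with the Lipschitz bound $\sigma\le\max_{i,j}|a_{ij}|$ produces an extra $\sqrt{r\Log m}\max_{i,j}|a_{ij}|$ which the target inequality does not allow. A direct union bound over triples $(I,I_0,J)$ therefore cannot close.

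The paper's argument is structurally different. One first decomposes $I_0$ into its $4$-connected components $I_{0,1},\ldots,I_{0,V}$; each $I_{0,v}$ lies in $\mathcal{I}_4(|I_{0,v}|)$, and \emph{for these} the entropy is controlled by \eqref{eq:size-Ir(k)}. The sum over $v$ is handled by Cauchy--Schwarz together with a dichotomy on whether $|I_{0,v}'\cap J|\ge d_A^{-2\ve p}\tfrac{l}{r}|I_{0,v}|$. For the ``bad'' components, where this fails, the flatness of $t$ forces the total $\ell_p$-weight on $\bigcup_{v\,\text{bad}}(I_{0,v}'\cap J)$ to be at most $d_A^{-\ve}$, so that entire piece is bounded by $d_A^{-\ve}\|G_A\|_{p\to q}$; taking expectations and invoking Corollary~\ref{cor:logdA} gives $d_A^{-\ve}\Log(d_A)(\ldots)\lesssim\ve^{-1}(\ldots)$. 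This self-referential step, not a multi-scale decomposition \`a la Proposition~\ref{prop:redtosupflat}, is the actual source of the $\ve^{-1}$ prefactor. For the ``good'' components, the lower bound $|I_{0,v}'\cap J|\ge\alpha|I_{0,v}|$ with $\alpha=d_A^{-2\ve p}l/r$ is exactly what sharpens the Lipschitz constant in the subsequent application of \eqref{eq:supgauss} (Lemma~\ref{lem:fixedI0} ii)), so that $\sqrt{r\Log d_A}$ gets multiplied by $d_A^{1/2}\gamma u^{-1/p}$ and the $\sqrt{r}$ is absorbed by $u^{-1/p}\le(\alpha r)^{-1/p}$ (here $p\le 2$ is used). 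Your per-block use of Proposition~\ref{prop:slepianp*qbelow4gen} is the right ingredient for the analogue of Lemma~\ref{lem:fixedI0} i), but without the connected-component reduction and the self-referential piece the entropy and the Lipschitz constant cannot be balanced to yield the stated bound.
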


\begin{proof}
Let us fix  $I\subset [m]$, $I_0\subset I$ with $|I_0|=r$, $J\in \mathcal{J}_4(l)$,
$s\in K(q^*,m,I,d_A^\ve)$, and $t\in K(p,n,J,d_A^\ve)$.
Let $I_{0,1},\ldots,I_{0,V}$ be $4$-connected componets of $I_0$. Then $(I_{0,1}',\ldots,I_{0,V}')$ is a partition of $I_0'$, and $(I_{0,1}'',\ldots,I_{0,V}'')$ is a partition of $I_0''$. Hence,
\[
\sum_{\substack{i\in I_0''\cap I\\ j\in I_0'\cap J}}a_{ij}g_{ij}s_it_j
=\sum_{v=1}^{ V} \sum_{\substack{i\in I_{0,v}''\cap I\\ j\in I_{0,v}'\cap J}}a_{ij}g_{ij}s_it_j.
\]

Let 
\[
\mathcal{V}=\Bigl\{v\leq V\colon |I_{0,v}'\cap J|< d_A^{-2\ve p}\frac{l}{r}|I_{0,v}|
= d_A^{-2\ve p}\frac{|J|}{|I_0|}|I_{0,v}|\Bigr\}.
\]
Then
\begin{align*}
\sum_{v\in \mathcal{V}}\sum_{\substack{i\in I_{0,v}''\cap I\\ j\in I_{0,v}'\cap J}}  a_{ij}g_{ij}s_it_j
&\leq \|G_A\|_{p\to q}\sum_{v\in \mathcal{V}}\|(s_i)_{i\in I_{0,v}''\cap I}\|_{q^*}
\|(t_j)_{j\in I_{0,v}'\cap J}\|_{p}
\\
&\leq \|G_A\|_{p\to q}\Bigl(\sum_{v\in \mathcal{V}}\|(s_i)_{i\in I_{0,v}''\cap I}\|_{q^*}^2\Bigr)^{1/2}
\Bigl(\sum_{v\in \mathcal{V}}\|(t_j)_{j\in I_{0,v}'\cap J}\|_{p}^2\Bigr)^{1/2}
\\
&\leq \|G_A\|_{p\to q}\Bigl(\sum_{v\in \mathcal{V}}\|(s_i)_{i\in I_{0,v}''\cap I}\|_{q^*}^{q^*}\Bigr)^{1/q^*}
\Bigl(\sum_{v\in \mathcal{V}}\|(t_j)_{j\in I_{0,v}'\cap J}\|_{p}^p\Bigr)^{1/p}
\\
&\leq \|G_A\|_{p\to q}\|s\|_{q^*}
\Bigl(\sum_{v\in \mathcal{V}}\frac{d_{A}^{\ve p}}{|J|}|I_{0,v}'\cap J|\Bigr)^{1/p}
\\
&\leq d_A^{-\ve}\|G_A\|_{p\to q}\Bigl(\sum_{v\in \mathcal{V}}\frac{1}{|I_0|}|I_{0,v}|\Bigr)^{1/p}
\leq d_A^{-\ve}\|G_A\|_{p\to q}.
\end{align*}

For a nonempty $ I_0\subset [m]$, $1\leq u\leq n$, and $\gamma \ge 1$ define
\begin{multline*}
Z_{I_0,u}(\gamma)\\ \coloneqq \max_{I_0\subset I\subset [m]}\max_{J\subset [n], |J\cap I_0'|\geq u}\
\sup_{s\in K(q^*,m,I,\gamma)}\ \sup_{t\in K(p,n,J,\gamma)}
\frac{\sum_{i\in I_0''\cap I,j\in I_0'\cap J}a_{ij}g_{ij}s_it_j}
{\|(s_i)_{i\in I_0''\cap I}\|_{q^*}\|(t_j)_{j\in I_0'\cap J}\|_p}.
\end{multline*}
Moreover,  for $1\leq r\leq m$,  $1\leq u\leq n$, and $\gamma\ge 1$, set
\[
Z_{r,u}(\gamma)\coloneqq \max_{I_0\in \mathcal{I}_4(r)}Z_{I_0,u}(\gamma),
\]
and for $\alpha\in (0,1]$ and $\gamma\ge 1$, let
\[
Z_{\alpha}(\gamma)\coloneqq \max_{1\leq r\leq m}\max_{u\geq \alpha r}Z_{r,u}(\gamma).
\]
Observe that for $v\notin \mathcal{V}$, $|I_{0,v}'\cap J|\geq \alpha|I_{0,v}|$, where 
$\alpha\coloneqq d_A^{-2\ve p}\frac{l}{r}\geq d_A^{-2}$, so 
\begin{align*}
\sum_{v\notin \mathcal{V}}\sum_{\substack{i\in I_{0,v}''\cap I\\ j\in I_{0,v}'\cap J}}  a_{ij}g_{ij}s_it_j
& \leq Z_\alpha(d_A^\ve)\sum_{v\notin \mathcal{V}}\|(s_i)_{i\in I_{0,v}''\cap I}\|_{q^*}
\|(t_j)_{j\in I_{0,v}'\cap J}\|_{p}
\end{align*}
\begin{align*}
&\leq Z_\alpha(d_A^\ve)\Bigl(\sum_{v}\|(s_i)_{i\in I_{0,v}''\cap I}\|_{q^*}^2\Bigr)^{1/2}
\Bigl(\sum_{v}\|(t_j)_{j\in I_{0,v}'\cap J}\|_{p}^2\Bigr)^{1/2}
\\
&\leq Z_\alpha(d_A^\ve)\Bigl(\sum_{v}\|(s_i)_{i\in I_{0,v}''\cap I}\|_{q^*}^{q^*}\Bigr)^{1/q^*}
\Bigl(\sum_{v}\|(t_j)_{j\in I_{0,v}'\cap J}\|_{p}^p\Bigr)^{1/p}
\\
&\leq Z_\alpha(d_A^\ve)\|s\|_{q^*}\|t\|_p\leq Z_\alpha(d_A^\ve).
\end{align*}

The above argument shows that
\begin{align*}
\max_{I \subset [m]}\max_{I_0\subset I,|I_0|=r}\max_{J\in \mathcal{J}_4(l)}\
\sup_{s\in K(q^*,m,I,d_A^\ve)}\ \sup_{t\in K(p,n,J,d_A^\ve)}
\sum_{i\in I_0''\cap I,j\in I_0'\cap J}a_{ij}g_{ij}s_it_j
\\
\leq d_A^{-\ve}\|G_A\|_{p\to q}+Z_\alpha(d_A^\ve).
\end{align*}

Corollary \ref{cor:logdA} and estimate~\eqref{eq:sup-eps} yield   that
\begin{align*}
d_A^{-\ve}\Ex\|G_A\|_{p\to q} 
&\lesssim 
\sup_{x\ge 1} (x^{-\ve} \Log x)\Bigl( D_1+D_2+\sqrt{\Log(mn)}\max_{i,j}|a_{ij}| \Bigr)
\\ 
& \sim 
\ve^{-1}\Bigl( D_1+D_2+\sqrt{\Log(mn)}\max_{i,j}|a_{ij}| \Bigr).
\end{align*}
Therefore, the assertion follows from part iii) of the next lemma.
\end{proof}

\begin{lem}
\label{lem:fixedI0}
Let  $\gamma\ge 1$ and $ p^*,q\in [2,4)$.
\\
i) If $I_0\subset [m]$ and $1\leq u\leq n$ satisfy $u\geq |I_0|d_A^{-2}$, then
\[
\Ex Z_{I_0,u}(\gamma)
\lesssim D_1+D_2+(3\Log(d_A\gamma))^{2/(4-p^*\vee q)}\max_{i,j}|a_{ij}|.
\]
ii) If $1\leq r\leq m$ and  $1\leq u\leq n$ satisfy $u\geq rd_A^{-2}$, then
\begin{multline*}
\Ex Z_{r,u}(\gamma)\lesssim D_1+D_2 
\\
 +
\Bigl((3\Log(d_A\gamma))^{2/(4-p^*\vee q)}+\sqrt{\Log m}+
\sqrt{r\Log d_A}\, \gamma d_A^{1/2}u^{-1/p}\Bigr)\max_{i,j}|a_{ij}|.
\end{multline*}
iii) If $\alpha\geq d_A^{-2}$, then
\begin{multline*}
\Ex Z_\alpha(\gamma)
\lesssim  D_1+D_2 
\\
\quad +
\Bigl((3\Log(d_A\gamma))^{2/(4-p^*\vee q)}+\sqrt{\Log(mn)}+\sqrt{\Log d_A}\, \gamma d_A^{1/2}\alpha^{-1/p}\Bigr)\max_{i,j}|a_{ij}|.
\end{multline*}
\end{lem}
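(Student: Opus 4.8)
## Proof strategy for Lemma~\ref{lem:fixedI0}

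The plan is to prove the three parts in order, deriving (ii) from (i) and (iii) from (ii) via the union bounds on $r$-connected sets from \eqref{eq:size-Ir(k)}, so the real work is part (i).

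\textbf{Part (i).} Fix $I_0\subset[m]$ and $u$ with $u\ge|I_0|d_A^{-2}$. The quantity $Z_{I_0,u}(\gamma)$ is a supremum of a Gaussian process over a set of normalised pairs $(s,t)$ indexed by all $I\supset I_0$ and all $J$ with $|J\cap I_0'|\ge u$; however, the entries that actually appear are only those $a_{ij}$ with $i\in I_0''$ and $j\in I_0'$. The point is that $|I_0'|\le d_{1,A}|I_0|$ and $|I_0''|\le d_{1,A}d_{2,A}|I_0|$, so the whole summation lives on a submatrix of controlled shape. The first step is therefore to bound $Z_{I_0,u}(\gamma)$ by a supremum over $s\in B_{q^*}^m\cap a B_\infty^m$ and $t\in B_p^n\cap bB_\infty^n$ restricted to the coordinate blocks $I_0''$ and $I_0'$, where $a\sim\gamma|I_0|^{-1/q^*}$ and $b\sim\gamma u^{-1/p}$ (using the flatness of vectors in $K(q^*,m,I,\gamma)$ and $K(p,n,J,\gamma)$, together with the normalisation in the denominator of $Z_{I_0,u}$, which forces $\|(t_j)_{j\in I_0'\cap J}\|_p$ to be of order $1$ while the support has size $\ge u$). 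Then I would apply Proposition~\ref{prop:slepianp*qbelow4gen} to this restricted supremum. The logarithmic arguments $ma^{q^*}$ and $nb^p$ become, after restriction to the blocks, $|I_0''|\,a^{q^*}\lesssim d_A^2\gamma^{q^*}$ and $|I_0'|\,b^p\lesssim d_A^2\gamma^p$ (here the hypothesis $u\ge|I_0|d_A^{-2}$ is exactly what keeps $b$ from being too large, i.e.\ what makes $nb^p$ bounded by a power of $d_A\gamma$). Since $p^*,q<4$, the prefactors $a^{(2-q^*)/2}$ and $b^{(2-p)/2}$ are bounded by $1$, and the logarithmic terms collapse to $(3\Log(d_A\gamma))^{2/(4-p^*\vee q)}\max_{i,j}|a_{ij}|$. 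This gives (i).

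\textbf{Part (ii).} Now $Z_{r,u}(\gamma)=\max_{I_0\in\mathcal I_4(r)}Z_{I_0,u}(\gamma)$ is a maximum over at most $m4^rd_A^{4r}$ sets by \eqref{eq:size-Ir(k)}. I would invoke the Gaussian-concentration bound \eqref{eq:supgauss}: the expected maximum is at most $\max_{I_0}\Ex Z_{I_0,u}(\gamma)$ plus $C\sqrt{\Log(m4^rd_A^{4r})}$ times the largest standard deviation of the processes $Z_{I_0,u}(\gamma)$. The standard deviation is
\[
\sup\Bigl(\sum_{i,j}a_{ij}^2 s_i^2 t_j^2\Bigr)^{1/2}
\]
over the relevant $s,t$; since $s\in B_2$ and, on the support $I_0'\cap J$ of size $\ge u$, the normalised $t$ obeys $\|t\|_\infty\lesssim\gamma u^{-1/p}$, bounding $\sum a_{ij}^2 s_i^2t_j^2\le \|t\|_\infty^2\sum_i\sum_{j\in I_0'}a_{ij}^2\le \gamma^2 u^{-2/p}\,d_{1,A}\max_{i,j}a_{ij}^2$ (the row sums over $I_0'$ have at most $d_{1,A}$ nonzero terms). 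Combining with $\sqrt{\Log(m4^rd_A^{4r})}\lesssim\sqrt{\Log m}+\sqrt{r\Log d_A}$ yields the extra term $(\sqrt{\Log m}+\sqrt{r\Log d_A})\gamma d_A^{1/2}u^{-1/p}\max_{i,j}|a_{ij}|$, which is exactly the third summand claimed.

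\textbf{Part (iii).} Finally $Z_\alpha(\gamma)=\max_{1\le r\le m}\max_{u\ge\alpha r}Z_{r,u}(\gamma)$. For a fixed $r$, the values of $u$ only enter through the monotone bound in (ii), so $\max_{u\ge\alpha r}Z_{r,u}(\gamma)\le Z_{r,\lceil\alpha r\rceil}(\gamma)$, giving $u^{-1/p}\le(\alpha r)^{-1/p}$ and hence $\sqrt{r\Log d_A}\,\gamma d_A^{1/2}u^{-1/p}\le\sqrt{\Log d_A}\,\gamma d_A^{1/2}\alpha^{-1/p}r^{1/2-1/p}$; since $p\ge1$ we have $r^{1/2-1/p}\le 1$, so this is bounded independently of $r$. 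Taking then $\max_{1\le r\le m}$ via \eqref{eq:supgauss} one more time (there are at most $m$ values of $r$) adds a $\sqrt{\Log m}$, and combining it with the $\sqrt{\Log m}$ already present and absorbing $\sqrt{\Log n}$ where needed gives $\sqrt{\Log(mn)}$; the hypothesis $\alpha\ge d_A^{-2}$ is used to invoke (ii) for each $u\ge\alpha r\ge rd_A^{-2}$. This yields the stated bound.

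\textbf{Main obstacle.} The delicate point is part (i): one must be careful that after restricting to the blocks $I_0''\times I_0'$ the relevant $\ell_\infty$-bounds $a$ and $b$ are chosen so that the $\Log(ma^{q^*})$ and $\Log(nb^p)$ produced by Proposition~\ref{prop:slepianp*qbelow4gen} really do become $O(\Log(d_A\gamma))$ rather than $O(\Log(mn))$ — this is where the hypothesis $u\ge|I_0|d_A^{-2}$ is indispensable, and where the normalisation built into $Z_{I_0,u}$ (dividing by $\|(t_j)_{j\in I_0'\cap J}\|_p$) has to be matched precisely to the flatness level $\gamma$. Everything else is a routine application of \eqref{eq:supgauss} and the counting estimate \eqref{eq:size-Ir(k)}.
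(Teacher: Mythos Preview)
Your approach is essentially the same as the paper's: part (i) via Proposition~\ref{prop:slepianp*qbelow4gen} applied to the submatrix indexed by $I_0''\times I_0'$, and parts (ii)--(iii) via successive applications of \eqref{eq:supgauss} together with the counting bound \eqref{eq:size-Ir(k)}.

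There is one slip in part (ii). Your variance bound
\[
\sum_{i,j}a_{ij}^2\bar s_i^2\bar t_j^2 \le \|\bar t\|_\infty^2\, d_{1,A}\max_{i,j}a_{ij}^2 \le \gamma^2 u^{-2/p}\,d_A\max_{i,j}a_{ij}^2
\]
is correct (once you keep the factor $s_i^2$ and use $\|\bar s\|_2\le 1$), but by itself it yields the concentration term
\[
\bigl(\sqrt{\Log m}+\sqrt{r\Log d_A}\bigr)\,\gamma d_A^{1/2}u^{-1/p}\max_{i,j}|a_{ij}|,
\]
which is \emph{not} the third summand claimed in (ii): the statement has $\sqrt{\Log m}$ \emph{without} the prefactor $\gamma d_A^{1/2}u^{-1/p}$. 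To obtain the stated form you must also record the trivial bound $\sum_{i,j}a_{ij}^2\bar s_i^2\bar t_j^2\le \max_{i,j}a_{ij}^2$ (from $\bar s\in B_2^m$, $\bar t\in B_2^n$) and use the minimum of the two, pairing the $1$ with $\sqrt{\Log m}$ and the $\gamma d_A^{1/2}u^{-1/p}$ with $\sqrt{r\Log d_A}$. The paper does exactly this. With that fix, your derivation of (iii) goes through (your monotonicity step $\max_{u\ge\alpha r}Z_{r,u}=Z_{r,\lceil\alpha r\rceil}$ is fine, and the final application of \eqref{eq:supgauss} over the at most $mn$ pairs $(r,u)$ with variance bounded by $\max_{i,j}|a_{ij}|^2$ gives $\sqrt{\Log(mn)}$; no mysterious ``absorbing $\sqrt{\Log n}$'' is needed).

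A couple of small numerical corrections in (i): with $b=\gamma u^{-1/p}$ and $|I_0'|\le d_A|I_0|$, one gets $|I_0'|\,b^p\le d_A|I_0|\cdot\gamma^p\cdot d_A^2/|I_0|=d_A^3\gamma^p$ (not $d_A^2\gamma^p$), but this is still $\lesssim\Log(d_A\gamma)$ after taking logs. The paper equivalently takes $b=(\gamma d_A^{2/p}r^{-1/p})\wedge 1$.
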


\begin{proof}
Let us fix $I_0\subset I\subset [m]$, $J\subset [n]$ with $|I_0|=r$, $|J\cap I_0'|\geq u$, 
$s\in K(q^*,m,I,\gamma)$
and $t\in K(p,n,J,\gamma)$. Define
\[
\bar{s}=\|(s_i)_{i\in I_0''\cap I}\|_{q^*}^{-1}(s_i)_{i\in I_0''\cap I},\quad
\bar{t}=\|(t_j)_{j\in I_0'\cap J}\|_{p}^{-1}(t_j)_{j\in I_0'\cap J}.
\]
Recall that $I_0\subset I_0''$, so $\|\bar{s}\|_{q^*} =1$,  
$\|\bar{s}\|_{\infty}\leq \gamma|I_0''\cap I|^{-1/q^*}\leq \gamma |I_0|^{-1/q^*} 
\leq \gamma r^{-1/q^*}$,
$\|\bar{t}\|_{p} =1$, and $\|\bar{t}\|_{\infty}\leq \gamma|I_0'\cap J|^{-1/p}\leq \gamma u^{-1/p}
\leq \gamma d_A^{2/p}r^{-1/p}$.
Hence,  part i) of the assertion  follows from 
Proposition \ref{prop:slepianp*qbelow4gen} applied with   the matrix $(a_{ij})_{i\in I_0'',j\in I_0'}$,
$m= |I_0''|\leq d_A^2r$, 
$n= |I_0'|\leq d_Ar$, $a=(\gamma r^{-1/q^*})\wedge 1$ 
and $b=( \gamma d_A^{2/p}r^{-1/p})\wedge 1$.

To show part ii) observe that 
\begin{align*}
\sum_{i\in I_0''\cap I,j\in I_0'\cap J}a_{ij}^2\bar{s}_i^2\bar{t}_j^2
&\leq \max_{i\in I}\sum_{j\in J}a_{ij}^2\bar{t}_j^2
\leq \max_{i,j}a_{ij}^2\min\{1,d_A\|\bar{t}\|_\infty^2\}
\\
& \leq \max_{i,j}a_{ij}^2\min\{1,d_A\gamma^2u^{-2/p}\}.
\end{align*}
Moreover, estimate \eqref{eq:size-Ir(k)} yields $\sqrt{\Log |\mathcal{I}_4(r)|} \lesssim \sqrt{\Log m}+\sqrt{r\Log d_A}$,
so  part ii) follows from part i) and estimate~\eqref{eq:supgauss}.

Part iii) easily follows from ii) and another application of \eqref{eq:supgauss}.
\end{proof}

\begin{cor}
\label{cor:Ykl}
If $1\le k\le m$, $1\le l \le n$, $p^*,q\in[2,4)$, and $\ve\in(0,1/2]$, then
\begin{align*}
\Ex Y_{k,l}(d_A^\ve) 
&\lesssim
\ve^{-1}\Bigl[ D_1+D_2
\\
&+\Bigl(\sqrt{\Log(mn)}
+ \Bigl(\ve^{-1/2}+\Bigl(\frac{  40}{4-p^*\vee q}\Bigr)^{2/(4-p^*\vee q)}\Bigr)
d_A^{1/4+4\ve}\Bigr) 
\max_{i,j}|a_{ij}| \Bigr].
\end{align*}
\end{cor}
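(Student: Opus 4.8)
The plan is to combine Lemma~\ref{lem:decomp} with the two estimates for its right-hand side, namely Lemma~\ref{lem:outsideI0} and Lemma~\ref{lem:I_0r}, and then optimize over the free parameter $r$. First I would fix $1\le k\le m$ and $1\le l\le n$ and apply Lemma~\ref{lem:decomp} with a parameter $r\in[1,k\wedge l]$ to be chosen later; this bounds $\Ex Y_{k,l}(d_A^\ve)$ by the sum of two terms. For the second term in \eqref{eq:decomp}, Lemma~\ref{lem:outsideI0} (applied with $\alpha=\alpha_r=\min\{1,d_A^\ve l^{1/2-1/p}r^{-1/2}\}\max_{i,j}|a_{ij}|$) gives a bound of order
\[
\sqrt{p^*}D_1+\alpha_r\bigl(\sqrt{k\Log d_A}+\sqrt{\Log m}\bigr).
\]
For the first term in \eqref{eq:decomp}, Lemma~\ref{lem:I_0r} gives a bound of order
\[
\ve^{-1}\Bigl[D_1+D_2+\bigl((5\Log d_A)^{2/(4-p^*\vee q)}+\sqrt{\Log(mn)}+\sqrt{\Log d_A}\,d_A^{1/2+3\ve}(r/l)^{1/p}\bigr)\max_{i,j}|a_{ij}|\Bigr].
\]

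The key step is then to choose $r$ so that the two $d_A$-dependent contributions balance. The first-term contribution carries a factor $d_A^{1/2+3\ve}(r/l)^{1/p}$, which decreases in $r^{-1}$ (more precisely it grows like $r^{1/p}$, so it is small when $r$ is small), while the second-term contribution carries $\alpha_r\sqrt{k}$, and $\alpha_r\le d_A^\ve l^{1/2-1/p}r^{-1/2}\max_{i,j}|a_{ij}|$ decreases like $r^{-1/2}$. Since $k\le m$ and we also have the trivial bound $\alpha_r\le \max_{i,j}|a_{ij}|$, I would use $\sqrt{k}\,\alpha_r\le \sqrt{m}\wedge(d_A^\ve l^{1/2-1/p}r^{-1/2}\sqrt{k})\max_{i,j}|a_{ij}|$ together with $k,l\le d_A\cdot(\text{number of nonzero rows/columns})$; the point is that after plugging in $r\sim d_A^{c}$ for a suitable small constant $c$ (depending on $p$ and the power of $d_A$ appearing), both $d_A$-powers collapse to $d_A^{1/4+O(\ve)}$. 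Concretely, matching $d_A^{3\ve}(r/l)^{1/p}$ against $d_A^\ve (k/(lr))^{1/2}$ and using $k\le d_A l$ (valid up to deleting zero rows) suggests taking $r$ comparable to $l\,d_A^{-1/2}$ raised to an appropriate exponent, after which both terms become $O(d_A^{1/4+4\ve})$; the extra $\Log d_A$ factors are absorbed using $\sup_{x\ge1}x^{-\ve}\Log x\lesssim\ve^{-1}$, which is exactly why the $\ve^{-1/2}$ appears. The polylog term $(5\Log d_A)^{2/(4-p^*\vee q)}$ is handled the same way: $\Log d_A\lesssim \ve^{-1}d_A^{\ve}$ and $(5/(4-p^*\vee q))^{2/(4-p^*\vee q)}$ is pulled out as a constant, yielding the stated $\bigl(10/(4-p^*\vee q)\bigr)^{2/(4-p^*\vee q)}$ coefficient.

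Finally, collecting everything, absorbing $\sqrt{p^*},\sqrt{q}\lesssim 1$ (since $p^*,q<4$), bounding $\sqrt{k\Log d_A},\sqrt{\Log m}\lesssim \sqrt{\Log(mn)}+\sqrt{\Log d_A}\lesssim\sqrt{\Log(mn)}+\ve^{-1/2}d_A^{\ve}$, and noting the overall $\ve^{-1}$ prefactor from Lemma~\ref{lem:I_0r} dominates, one obtains
\[
\Ex Y_{k,l}(d_A^\ve)\lesssim \ve^{-1}\Bigl[D_1+D_2+\Bigl(\sqrt{\Log(mn)}+\bigl(\ve^{-1/2}+(10/(4-p^*\vee q))^{2/(4-p^*\vee q)}\bigr)d_A^{1/4+4\ve}\Bigr)\max_{i,j}|a_{ij}|\Bigr],
\]
uniformly in $k,l$, which is the claim. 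The main obstacle I anticipate is the bookkeeping in the choice of $r$: one must verify that the chosen $r$ indeed lies in the admissible range $[1,k\wedge l]$ in all regimes (in particular when $l$ is small), and that the exponent arithmetic genuinely produces $d_A^{1/4+O(\ve)}$ rather than a worse power — this requires carefully using $p\ge 1$ (so $1/p\le 1$) and the constraints $k\le m$, $l\le n$ together with the degree bounds $k\le d_A\cdot|\{\text{nonzero cols}\}|$ etc., and possibly splitting into the cases $r\le l d_A^{-1/2}$ and $r> l d_A^{-1/2}$ to get a clean $\min$.
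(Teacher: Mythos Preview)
Your approach is the same as the paper's --- combine Lemma~\ref{lem:decomp} with Lemmas~\ref{lem:outsideI0} and~\ref{lem:I_0r} and optimize over $r$ --- but the execution has two real gaps.

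First, the paper begins with the symmetry reduction to $l\ge k$. This is what makes the optimization clean: once $l\ge k$ you have $(r/l)^{1/p}\le (r/k)^{1/2}$ (since $p\le 2$) and $l^{1/2-1/p}\le 1$, so the two competing terms become $d_A^{1/2+3\ve}(r/k)^{1/2}$ and $d_A^{\ve}\sqrt{k}\,r^{-1/2}$, and the choice $r=\lfloor k d_A^{-1/2}\rfloor$ balances them to $d_A^{1/4+3\ve}$. Your suggestions ``$r\sim d_A^c$'' or ``$r$ comparable to $l d_A^{-1/2}$'', and the appeal to $k\le d_A l$, are not the right moves; without the symmetry step the arithmetic does not collapse this neatly.

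Second, the obstacle you anticipate is genuine and you have not resolved it: when $k<d_A^{1/2}$ the choice $r=\lfloor k d_A^{-1/2}\rfloor$ is not admissible. The paper does \emph{not} handle this via Lemmas~\ref{lem:decomp}--\ref{lem:I_0r} at all; instead it uses the direct bound \eqref{eq:estyklsmallk} (which comes from \eqref{eq:supoverI} and the count \eqref{eq:size-Ir(k)}), giving
\[
\Ex Y_{k,l}(d_A^\ve)\lesssim D_1+\bigl(\sqrt{k}+\sqrt{\Log m}+\sqrt{k\Log d_A}\bigr)\max_{i,j}|a_{ij}|,
\]
and then $\sqrt{k\Log d_A}\le d_A^{1/4}\sqrt{\Log d_A}\lesssim \ve^{-1/2}d_A^{1/4+\ve}$ finishes this case. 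Finally, note that in your ``collecting everything'' paragraph the claim $\sqrt{k\Log d_A}\lesssim \sqrt{\Log(mn)}+\sqrt{\Log d_A}$ is false (take $k=m$); the $\sqrt{k}$ factor must be killed either by the $r$-optimization (large $k$) or by $k<d_A^{1/2}$ (small $k$), not absorbed into $\sqrt{\Log(mn)}$.
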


\begin{proof}
By symmetry we may assume that $l\geq k$.

First assume that $k\geq d_A^{1/2}$.
Lemmas \ref{lem:decomp}--\ref{lem:I_0r} yield that for every $1\leq r\leq k$, 
\begin{align*}
\Ex Y_{k,l}(d_A^\ve)
& 
\lesssim \ve^{-1} \Bigr[ D_1+D_2
\\
& \quad+\Bigl(( 5\Log(d_A))^{2/(4-p^*\vee q)}+\sqrt{\Log(mn)}\Bigl)\max_{i,j}|a_{ij}|
\\ 
&\quad
 +\Bigl(\sqrt{\Log d_A}d_A^{1/2+3\ve}\Bigl(\frac{r}{l}\Bigr)^{1/p}
+\sqrt{k\Log d_A}\, d_A^{\ve}l^{1/2-1/p}r^{-1/2}\Bigr)\max_{i,j}|a_{ij}| \Bigr].
\end{align*}
Moreover, estimate~\eqref{eq:sup-eps} implies
\begin{gather} 
\notag
( 5\Log(d_A))^{2/(4-p^*\vee q)} 
 \leq   \Bigl(\frac{  40}{4-p^*\vee q}\Bigr)^{2/(4-p^*\vee q)}d_A^{1/4},
\\
\label{eq:log-vs-power} 
\sqrt{\Log d_A} \lesssim \ve^{-1/2} d_A^{\ve},
\end{gather}
and
\begin{align*}
\inf_{1\leq r\leq k} \Bigl(d_A^{1/2+3\ve}\Bigl(\frac{r}{l}\Bigr)^{1/p}+\sqrt{k}d_A^{\ve}l^{1/2-1/p}r^{-1/2}\Bigr)
&\leq 
\inf_{1\leq r\leq k} \Bigl( d_A^{1/2+3\ve}\Bigl(\frac{r}{k}\Bigr)^{1/2}+\sqrt{k}d_A^\ve r^{-1/2}\Bigr)
\\ &\lesssim d_A^{1/4+ 3\ve},
\end{align*}
where the last estimate follows by taking $r=\lfloor kd_A^{-1/2}\rfloor \in [1,k]$.

Now assume that $k<d_A^{1/2}$. Recall that for a fixed nonempty set $I$,
\[
X_I=X_I(A,p,q)\coloneqq \|(a_{ij}g_{ij})_{i\in I,j \in [n]}\|_{p\to q}.
\]
Note that
\[
Y_{k,l}(d_A^\ve)\leq \max_{I\in \mathcal{I}_4(k)}X_I.
\]
Therefore,  estimates \eqref{eq:supgauss}, \eqref{eq:supoverI}, and $|\mathcal{I}_4(k)|\leq m4^kd_{A}^{4k}$ (see \eqref{eq:size-Ir(k)}) yield
\begin{align}
\notag
\Ex Y_{k,l}(d_A^\ve)
&\lesssim
\max_{I\in \mathcal{I}_4(k)}\Ex X_I
+\sqrt{\Log|\mathcal{I}_4(k)|}\max_{I\in \mathcal{I}_4(k)}
\sup_{\|s\|_{q^*}\leq 1}\sup_{\|t\|_p\leq 1}\Bigl(\sum_{i,j}a_{ij}^2s_i^2t_j^2\Bigr)^{1/2}.
\\
\label{eq:estyklsmallk}
&\lesssim D_1+
( \sqrt{k} +\sqrt{\Log m}+\sqrt{k\Log d_A})\max_{i,j}|a_{ij}|.
\end{align}
The assertion follows easily by the assumption that $k<d_A^{1/2}$ and  
estimate \eqref{eq:log-vs-power}.
\end{proof}

\begin{proof}[Proof of Proposition~\ref{prop:pqdA}]
We apply Proposition \ref{prop:redtosupflat} and Corollary \ref{cor:Ykl} (with $\ve/4$ instead of $\ve$).
\end{proof}

\textbf{Acknowledgements.} Part of this work was carried out while the first-named author was visiting the
Hausdorff Research Institute for Mathematics, Univerity of Bonn. The hospitality of HIM and
of the organizers of the program \textit{Boolean Analysis in Computer Science} is gratefully acknowledged.

\vspace{0.4cm}

  \bibliographystyle{amsplain}
  \bibliography{matrices_pqGauss}

\end{document}